\documentclass[12pt,twoside,reqno]{amsart}

\usepackage{version}         

 \usepackage{color}

\usepackage{multirow}
\usepackage[OT1]{fontenc}
\usepackage{type1cm}
\usepackage{amssymb}
\usepackage{mathrsfs}
\usepackage[english]{babel}
\usepackage[latin1]{inputenc}
\usepackage[T1]{fontenc}
\usepackage{palatino}
\usepackage{amsfonts}
\usepackage{amsmath}
\usepackage{amssymb}
\usepackage{amsthm}
\usepackage{bbm}
\usepackage{extarrows}
\usepackage{graphicx}
\usepackage[usenames,dvipsnames]{xcolor}
\usepackage{wrapfig}
\usepackage{type1cm}
\usepackage[bf]{caption}
\usepackage{esint}
\usepackage{version}
\usepackage[colorlinks = true, citecolor = black, linkcolor = black, urlcolor = black, pdfstartview=FitH]{hyperref}
\usepackage{caption}
\usepackage{tikz}
\usepackage{bbding, wasysym}
\usepackage{enumitem}

\usepackage[left=2.3cm,top=3cm,right=2.3cm]{geometry}
\geometry{a4paper,centering}


\numberwithin{equation}{section}

\theoremstyle{plain}
\newtheorem{theorem}{Theorem}[section]

\newtheorem{proposition}[theorem]{Proposition}
\newtheorem{lemma}[theorem]{Lemma}
\newtheorem{notation}[theorem]{Notation}

\theoremstyle{remark}
\newtheorem{remark}[theorem]{Remark}

\newtheorem*{ack}{Acknowledgement}

\theoremstyle{definition}
\newtheorem{definition}[theorem]{Definition}

\newcommand{\R}{\mathbb{R}}

\newcommand{\Q}{\mathbb{Q}}
\newcommand{\N}{\mathbb{N}}

\newcommand{\cB}{\mathcal{B}}

\newcommand{\cH}{\mathcal{H}}
\newcommand{\cN}{\mathcal{N}}

\newcommand{\cU}{\mathcal{U}}

\newcommand{\cF}{\mathcal{F}}

\newcommand{\cG}{\mathcal{G}}
\newcommand{\cC}{\mathcal{C}}

\newcommand{\cA}{\mathcal{A}}

\renewcommand{\emptyset}{\varnothing}
\renewcommand{\epsilon}{\varepsilon}
\renewcommand{\rho}{\varrho}
\renewcommand{\phi}{\varphi}


\begin{document}

\title[Hausdorff dimension of frequency sets of univoque sequences]{Hausdorff dimension of frequency sets of univoque sequences}

\author{Yao-Qiang Li}

\address{Institut de Math\'ematiques de Jussieu - Paris Rive Gauche \\
         Sorbonne Universit\'e - Campus Pierre et Marie Curie\\
         Paris, 75005 \\
         France}

\email{yaoqiang.li@imj-prg.fr\quad yaoqiang.li@etu.upmc.fr}

\address{School of Mathematics \\
         South China University of Technology \\
         Guangzhou, 510641 \\
         P.R. China}

\email{scutyaoqiangli@gmail.com\quad scutyaoqiangli@qq.com}

\subjclass[2010]{Primary 37B10; Secondary 11K55, 11A63, 28A80.}
\keywords{univoque sequence, digit frequency, Hausdorff dimension, Bernoulli-type measure}
\date{\today}

\begin{abstract}
For integer $m\ge3$, we study the dynamical system $(\Lambda_m,\sigma_m)$ where $\Lambda_m$ is the set $\{w\in\{0,1\}^\N: w$ does not contain $0^m$ or $1^m\}$ and $\sigma_m$ is the shift map on $\{0,1\}^\N$ restricted to $\Lambda_m$, study the Bernoulli-type measures on $\Lambda_m$ and find out the unique equivalent $\sigma_m$-invariant ergodic probability measure. As an application, we obtain the Hausdorff dimension of the set of univoque sequences, the Hausdorff dimension of the set of sequences in which the lengths of consecutive $0$'s and consecutive $1$'s are bounded, and the Hausdorff dimension of their frequency subsets.
\end{abstract}

\maketitle

\section{Introduction}

Let $\N$ be the set of positive integers $\{1,2,3,\cdots\}$ and define
$$\Gamma:=\Big\{w\in\{0,1\}^\N: \overline{w}<\sigma^kw<w \text{ for all } k\ge1\Big\}$$
where $\sigma$ is the shift map on $\{0,1\}^\N$, $\overline{0}:=1$, $\overline{1}:=0$ and $\overline{w}:=\overline{w}_1\overline{w}_2\cdots$ for all $w=w_1w_2\cdots\in\{0,1\}^\N$.

The set $\Gamma$ is strongly related to two well known research topics, iterations of unimodal functions and unique expansions of $1$ (see \cite{AC01} for more details).

On the one hand, in 1985, Cosnard \cite{C85} proved that a binary sequence $\alpha=(\alpha_n)_{n\ge1}$ is the kneading sequence of $1$ for some unimodal function $f$ if and only if $\tau(\alpha)\in\Gamma'$, where $\tau:\{0,1\}^\N\to\{0,1\}^\N$ is a bijection defined by $\tau(w):=(\sum_{i=1}^nw_i$ $($mod $2))_{n\ge1}$ and
$$\Gamma':=\Big\{w\in\{0,1\}^\N: \overline{w}\le\sigma^kw\le w \text{ for all } k\ge0\Big\}$$
is similar to $\Gamma$ in the sense that $\Gamma'\setminus\{$periodic sequences$\}=\Gamma$. The structure of $\Gamma'\setminus\{(10)^\infty\}$ was studied in detail by Allouche \cite{A83} (see also \cite{AC83}). The generalizations of $\Gamma$ and $\Gamma'$ (to more than two digits) are studied in \cite{A83,AF09}.

On the other hand, expansions of real numbers in non-integer bases were introduced by R\'enyi \cite{R57} in 1957 and then widely studied until now (see for examples \cite{ACS09,B89,BW14,KL98,LW08,LL18,P60,S97}). In 1990, Erd\"os, Jo\'o and Komornik \cite{EJK90} proved that a sequence $\alpha=(\alpha_n)_{n\ge1}\in\{0,1\}^\N$ is the unique expansion of $1$ in some base $q\in(1,2)$ if and only if $\alpha\in\Gamma$. Thus we call $\Gamma$ the set of \textit{univoque sequences} in this paper.

For any $p\in[0,1]$, the \textit{frequency subsets} of $\Gamma$ are defined by
$$\Gamma_p:=\Big\{w\in\Gamma:\lim_{n\to\infty}\frac{\sharp\{k:1\le k\le n,w_k=0\}}{n}=p\Big\},$$
$$\underline{\Gamma}_p:=\Big\{w\in\Gamma:\varliminf_{n\to\infty}\frac{\sharp\{k:1\le k\le n,w_k=0\}}{n}=p\Big\},$$
$$\overline{\Gamma}_p:=\Big\{w\in\Gamma:\varlimsup_{n\to\infty}\frac{\sharp\{k:1\le k\le n,w_k=0\}}{n}=p\Big\},$$
and the \textit{frequency subsets} of
$$\Lambda:=\Big\{w\in\{0,1\}^\N:\text{the lengths of consecutive } 0\text{'s and consecutive }1\text{'s in }w\text{ are bounded}\Big\}$$
are defined by
$$\Lambda_p:=\Big\{w\in\Lambda:\lim_{n\to\infty}\frac{\sharp\{k:1\le k\le n,w_k=0\}}{n}=p\Big\},$$
$$\underline{\Lambda}_p:=\Big\{w\in\Lambda:\varliminf_{n\to\infty}\frac{\sharp\{k:1\le k\le n,w_k=0\}}{n}=p\Big\},$$
$$\overline{\Lambda}_p:=\Big\{w\in\Lambda:\varlimsup_{n\to\infty}\frac{\sharp\{k:1\le k\le n,w_k=0\}}{n}=p\Big\},$$
where $\sharp$ denotes the cardinality. Let
$$\cU:=\Big\{q\in(1,2): \text{ the }q\text{-expansion of }1\text{ is unique}\Big\}$$
be the set of \textit{univoque bases}. It is proved in \cite{DK95,KKL17} that $\cU$ is of full Hausdorff dimension (see \cite{F90} for definition). That is,
$$\dim_H\cU=1.$$

On frequency sets, there is a well known classical result given by Eggleston \cite{E49} saying that for any $p\in[0,1]$,
$$\dim_H\Big\{x\in[0,1):\lim_{n\to\infty}\frac{\sharp\{k:1\le k\le n,\epsilon_k(x)=0\}}{n}=p\Big\}=\frac{-p\log p-(1-p)\log(1-p)}{\log2}$$
where $\epsilon_1(x)\epsilon_2(x)\cdots\epsilon_n(x)\cdots$ is the \textit{greedy binary expansion} of $x$, and $0\log0:=0$.

Correspondingly, we give the following as the main result in this paper.

\begin{theorem}\label{main}
Let $\pi_2$ be the natural projection from $\{0,1\}^\N$ to $[0,1]$, $\dim_H$ denote the Hausdorff dimension in $[0,1]$ and $\dim_H(\cdot,d_2)$ denote the Hausdorff dimension in $\{0,1\}^\N$ equipped with the usual metric $d_2$. Then
\begin{itemize}
\item[\emph{(1)}] $\dim_H\pi_2(\Gamma)=\dim_H(\Gamma,d_2)=\dim_H\pi_2(\Lambda)=\dim_H(\Lambda,d_2)=1$;
\item[\emph{(2)}] for any $p\in[0,1]$, we have
$$\begin{aligned}
&\dim_H\pi_2(\Gamma_p)=\dim_H\pi_2(\underline{\Gamma}_p)=\dim_H\pi_2(\overline{\Gamma}_p)\\
=&\dim_H(\Gamma_p,d_2)=\dim_H(\underline{\Gamma}_p,d_2)=\dim_H(\overline{\Gamma}_p,d_2)\\
=&\dim_H\pi_2(\Lambda_p)=\dim_H\pi_2(\underline{\Lambda}_p)=\dim_H\pi_2(\overline{\Lambda}_p)\\
=&\dim_H(\Lambda_p,d_2)=\dim_H(\underline{\Lambda}_p,d_2)=\dim_H(\overline{\Lambda}_p,d_2)=\frac{-p\log p-(1-p)\log(1-p)}{\log 2}.
\end{aligned}$$
\end{itemize}
\end{theorem}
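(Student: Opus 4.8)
The plan is to carry out all estimates in the sequence space $(\{0,1\}^\N,d_2)$ and to transfer to $[0,1]$ only at the very end. Since the natural projection $\pi_2$ is Lipschitz and is injective off the countable set of sequences ending in $0^\infty$ or $1^\infty$, it preserves Hausdorff dimension (a standard fact for the binary coding), so $\dim_H\pi_2(A)=\dim_H(A,d_2)$ for each set $A$ in the statement; thus the six ``projected'' and six ``symbolic'' equalities collapse to one computation per set. I would then pin everything down by one squeezing scheme. First record two inclusions. Writing $\mathrm{freq}(w):=\lim\frac1n\sharp\{k\le n:w_k=0\}$ when it exists, each of $\Gamma_p,\Lambda_p$ lies in the full-shift level set $\{\mathrm{freq}=p\}$, each of $\underline\Gamma_p,\underline\Lambda_p$ in $\{\varliminf=p\}$, and each of $\overline\Gamma_p,\overline\Lambda_p$ in $\{\varlimsup=p\}$. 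Secondly, $\Gamma\subseteq\Lambda$: a run $0^L$ inside a univoque $w$ forces, through $\overline w<\sigma^k w$, an initial run $1^L$ of $w$, so unbounded $0$-runs would give $w=1^\infty\notin\Gamma$, and symmetrically for $1$-runs via $\sigma^k w<w$. Hence $\Gamma_p$ sits inside all six sets. Setting $h(p):=\frac{-p\log p-(1-p)\log(1-p)}{\log 2}$, it therefore suffices to prove $\dim_H(\Gamma_p,d_2)\ge h(p)$ together with the upper bound $h(p)$ for the three full-shift level sets; everything in between is squeezed to $h(p)$. Part (1) is then the case $p=\tfrac12$, since $h(\tfrac12)=1$ and $\Gamma_{1/2}\subseteq\Gamma\subseteq\Lambda\subseteq\{0,1\}^\N$.

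For the upper bound I would use one covering argument valid uniformly for the ``$\lim$'', ``$\varliminf$'' and ``$\varlimsup$'' level sets, exploiting that in each case $\mathrm{freq}(w|_n)$ is within $\epsilon$ of $p$ for infinitely many $n$. Fixing $\epsilon>0$ and $N$, every $w$ in the level set has some $n\ge N$ with $\mathrm{freq}(w|_n)\in(p-\epsilon,p+\epsilon)$, so the cylinders of these prefixes cover the set by sets of diameter $\le 2^{-N}$. The number of admissible length-$n$ words is at most $\mathrm{poly}(n)\exp\!\big(n\max_{|q-p|\le\epsilon}(-q\log q-(1-q)\log(1-q))\big)$, so for $s$ above $\max_{|q-p|\le\epsilon}h(q)$ the $s$-content $\sum_{n\ge N}(\text{count})\,2^{-ns}$ is a convergent geometric tail tending to $0$ as $N\to\infty$. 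Letting $\epsilon\to0$ gives dimension $\le h(p)$ for all three level sets, hence for all six sets.

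The lower bound is the substance of the paper. For $\Lambda_p$ it drops out of the $\Lambda_m$-analysis: for large $m$ one takes the $\sigma_m$-invariant ergodic Bernoulli-type measure $\mu_m$ on $\Lambda_m$ tuned so that $\mu_m$-a.e. $w$ has $\mathrm{freq}(w)=p$, with entropy $h(\mu_m)$ approaching $-p\log p-(1-p)\log(1-p)$ as $m\to\infty$ (the prohibition of $0^m,1^m$ costs only $o(1)$ in entropy). By Shannon--McMillan--Breiman, $-\tfrac1n\log\mu_m([w|_n])\to h(\mu_m)$ for $\mu_m$-a.e. $w$, and since $[w|_n]$ has $d_2$-diameter $2^{-n}$ this says the local dimension equals $h(\mu_m)/\log2$ a.e.; Billingsley's lemma (the mass distribution principle) then gives $\dim_H(\Lambda_p,d_2)\ge h(\mu_m)/\log2\to h(p)$.

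The genuine difficulty is upgrading this to $\dim_H(\Gamma_p,d_2)\ge h(p)$, i.e. producing abundantly many \emph{univoque} sequences of frequency $p$ with near-maximal entropy. The obstruction is that the two-sided admissibility $\overline w<\sigma^k w<w$ for all $k$ is extremely rigid: a naive free concatenation of blocks $10^{j}$ already violates the lower inequality $\overline w<\sigma^k w$ as soon as some $0$-run has length $\ge2$, so high-frequency, high-entropy behaviour cannot be bought without control. My approach would be to build a subsystem $\Sigma\subseteq\Gamma$ by interleaving the ``free'' $\Lambda_m$-blocks (which carry the frequency $p$ and the entropy) with sparse ``controlling'' blocks cut from a genuine univoque (Thue--Morse--Komornik--Loreti type) sequence, arranged so that any shift $\sigma^k w$ meeting a controlling block is automatically trapped strictly between $\overline w$ and $w$. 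Giving the controlling blocks asymptotic density $0$ leaves both the frequency and the entropy unperturbed in the limit, so rerunning the Shannon--McMillan--Breiman/mass-distribution argument on $\Sigma$ yields $\dim_H(\Gamma_p,d_2)\ge h(\mu_m)/\log2\to h(p)$. The hard part is precisely the verification that this interleaving is admissible for \emph{every} shift while the controlling overhead stays $o(1)$: admissibility pulls toward rigid, low-discrepancy structure whereas maximal entropy pulls toward freedom, and reconciling the two is where the careful combinatorics lives.
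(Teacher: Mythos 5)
Your overall reduction coincides with the paper's: transfer everything to $(\{0,1\}^\N,d_2)$ via $\pi_2$, squeeze all twelve sets between $\Gamma_p$ and the global level sets $G_p$, $\underline{G}_p$, $\overline{G}_p$, prove the upper bound $h(p)$ by a covering/counting argument on the full shift, and prove lower bounds using the Bernoulli-type measures on $\Lambda_m$. The genuine gap is at the one step you yourself call ``the genuine difficulty'': the inequality $\dim_H(\Gamma_p,d_2)\ge h(p)$. What you give there is a plan, not a proof. You posit a subsystem $\Sigma\subset\Gamma$ built by interleaving $\Lambda_m$-blocks with density-zero ``controlling'' blocks cut from a Komornik--Loreti-type sequence, but you never specify the interleaving, never verify that \emph{every} shift $\sigma^kw$ --- in particular the overwhelming majority of shifts that land inside the free blocks, for which proximity to a controlling block is irrelevant --- satisfies $\overline w<\sigma^kw<w$, and never check that the construction preserves both the frequency $p$ and near-maximal entropy of the surviving set. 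Since you explicitly defer ``the careful combinatorics'', the decisive inequality of the theorem remains unproved in your write-up. (Smaller deferred items of the same kind: your claim that the entropy loss from forbidding $0^m,1^m$ is $o(1)$, and that the measure can be ``tuned'' to frequency $p$, are exactly what the paper's Lemma 6.4 and Lemma 6.5 establish via the calibration $\lambda_{q_m}[0]=f_m(q_m)=p$ with $q_m\to p$; also the Bernoulli-type measure $\mu_p$ itself is \emph{not} $\sigma_m$-invariant, so your appeal to Shannon--McMillan--Breiman needs the equivalent invariant ergodic measure of the paper's Theorem 4.3.)

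The instructive point is that no Thue--Morse-type control is needed; the paper's device is far lighter than what you envision. It takes $\Theta_{m,p}:=\{w\in G_p:\ w_1\cdots w_{2m}=1^{2m},\ w_{km+1}\cdots w_{km+m}\notin\{0^m,1^m\}\text{ for all }k\ge2\}$, i.e.\ a \emph{single} controlling block $1^{2m}$ placed at the start, followed by the requirement that all aligned $m$-blocks be non-constant. The latter forces every run of equal digits after the prefix to have length at most $2m-2$, so for each $k\ge1$ the sequence $\sigma^kw$ reaches a $0$ strictly before $w$ does (whence $\sigma^kw<w$, as $w$ opens with $1^{2m}$) and reaches a $1$ strictly before $\overline w$ does (whence $\overline w<\sigma^kw$); thus $\Theta_{m,p}\subset\Gamma_p$ with no further combinatorics. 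Moreover $\sigma^{2m}(\Theta_{m,p})=\Xi_{m,p}\supset\Lambda_m\cap G_p$ where $\Xi_{m,p}$ imposes only the aligned-block condition, and $\sigma^{2m}$ is Lipschitz, so $\dim_H(\Gamma_p,d_2)\ge\dim_H(\Theta_{m,p},d_2)\ge\dim_H(\Lambda_m\cap G_p,d_2)$. At that point the $\Lambda_m$-measure argument you do have in hand (carried out in the paper with digit-occurrence counts and a metric-space Billingsley lemma rather than SMB) applies verbatim, and letting $m\to\infty$ finishes the proof. So your skeleton is recoverable, but as written the hardest inequality of the theorem rests on an unexecuted construction that the paper replaces by this one-line prefix trick.
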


This paper follows a similar framework and method as \cite{LLS19}, but most of the details are different. In Section 2, we define the symbolic dynamical system $(\Lambda_m,\sigma_m)$, and give some basic notations and preliminaries on dynamical systems and measure theory. In Section 3, we study some related digit occurrence parameters and their properties which will be used later. In Section 4, we define the Bernoulli-type measure $\mu_p$ on $(\Lambda_m,\cB(\Lambda_m))$, where $\cB(\Lambda_m)$ is the Borel sigma-algebra on $\Lambda_m$ (equipped with the usual metric). The measure $\mu_p$ turns out to be not $\sigma_m$-invariant, but we find out that there exists a unique $\sigma_m$-invariant ergodic probability measure on $(\Lambda_m,\cB(\Lambda_m))$ equivalent to $\mu_p$. In Section 5, we study some basic properties of Hausdorff dimension in general metric spaces. Finally in Section 6, we apply the Bernoulli-type measures in Section 4 and the properties of Hausdorff dimension in Section 5 to obtain our main result.

Throughout this paper, we use $\N$, $\Q$ and $\R$ to denote the sets of positive integers, rational numbers and real numbers respectively.

\section{Notations and preliminaries}

Let $\{0,1\}^*:=\bigcup_{n=1}^\infty\{0,1\}^n$ and $\{0,1\}^\N$ be the sets of finite words and infinite sequences respectively on two digits $\{0,1\}$. For any integer $m\ge3$, define
$$\Lambda_m:=\Big\{w\in\{0,1\}^\N: w \text{ does not contain } 0^m \text{ or } 1^m\Big\},$$
$$\Lambda_m^*:=\Big\{w\in\{0,1\}^*: w \text{ does not contain } 0^m \text{ or } 1^m\Big\}$$
and
$$\Lambda_m^n:=\Big\{w\in\{0,1\}^n: w \text{ does not contain } 0^m \text{ or } 1^m\Big\}$$
where $n\in\N$. For a finite word $w\in\{0,1\}^*$, we use $|w|$, $|w|_0$ and $|w|_1$ to denote its length, the number of $0$'s in $w$ and the number of $1$'s in $w$ respectively. Besides, $w|_k:=w_1w_2\cdots w_k$ denotes the prefix of $w$ with length $k$ for $w\in\{0,1\}^\N$ or $w\in\{0,1\}^n$ where $n\ge k$.

Let $\sigma:\{0,1\}^\N\to\{0,1\}^\N$ be the \textit{shift map} defined by
$$\sigma(w_1w_2\cdots)=w_2w_3\cdots\quad\text{for }w\in\{0,1\}^\N$$
and $d_2$ be the \textit{usual metric} on $\{0,1\}^\N$ defined by
$$d_2(w,v):=2^{-\inf\{k\ge0: w_{k+1}\neq v_{k+1}\}}\quad\text{for }w,v\in\{0,1\}^\N,$$
where $2^{-\infty}=0$. Then $\sigma$ is continuous on $(\{0,1\}^\N,d_2)$. Noting that $\sigma(\Lambda_m)=\Lambda_m$, we use $\sigma_m:\Lambda_m\to\Lambda_m$ to denote the restriction of $\sigma$ on $\Lambda_m$. Then $(\Lambda_m,\sigma_m)$ is a dynamical system. It is straightforward to check that the \textit{natural projection map} $\pi_2:\{0,1\}^\N\rightarrow[0,1]$, defined by
$$\pi_2(w):=\sum_{n=1}^\infty\frac{w_n}{2^n} \quad \text{for } w \in \{0,1\}^\N,$$
is surjective and continuous. Besides, we need the following concepts.

\begin{definition}[Cylinder]\label{Cylinder}
Let $m\ge3$ be an integer and $w\in\Lambda_m^*$. We call
$$[w]:=\Big\{v\in\Lambda_m:v\text{ begins with }w\Big\}$$
the \textit{cylinder} in $\Lambda_m$ generated by $w$.
\end{definition}

\begin{definition}[Absolute continuity and equivalence]
Let $\mu$ and $\nu$ be measures on a measurable space $(X,\cF)$. We say that $\mu$ is \textit{absolutely continuous} with respect to $\nu$ and denote it by $\mu \ll \nu$ if, for any $A \in\cF$, $\nu(A) = 0$ implies $\mu(A) = 0$. Moreover, if $\mu\ll\nu$ and $\nu\ll\mu$ we say that $\mu$ and $\nu$ are \textit{equivalent} and denote this property by $\mu\sim\nu$.
\end{definition}

\begin{definition}[Invariance and ergodicity]
Let $(X, \mathcal{F}, \mu, T)$ be a measure-preserving dynamical system, that is, $(X,\cF,\mu)$ is a probability space and $\mu$ is $T$-\textit{invariant}, i.e., $T\mu = \mu$ (i.e., $\mu\circ T^{-1}=\mu$). We say that the probability measure $\mu$ is \textit{ergodic} with respect to $T$ if for every $A\in\cF$ satisfying $T^{-1}A=A$ (such a set is called $T$-\textit{invariant}), we have $\mu(A)=0$ or $1$. We also say that $(X,\cF,\mu,T)$ is ergodic.
\end{definition}

\begin{definition}
Let $\cC$ be a family of certain subsets of a set $X$.
\begin{itemize}
\item[(1)] $\cC$ is called a \textit{monotone class} on $X$ if
  \begin{itemize}
  \item[\textcircled{\footnotesize{$1$}}] $A_n\in\cC$ and $A_1\subset A_2\subset\cdots$ $\Rightarrow$ $\bigcup_{n=1}^\infty A_n\in\cC$;
  \item[\textcircled{\footnotesize{$2$}}] $A_n\in\cC$ and $A_1\supset A_2\supset\cdots$ $\Rightarrow$ $\bigcap_{n=1}^\infty A_n\in\cC$.
  \end{itemize}
\item[(2)] $\cC$ is called a \textit{semi-algebra} on $X$ if
  \begin{itemize}
  \item[\textcircled{\footnotesize{$1$}}] $\emptyset\in\cC$;
  \item[\textcircled{\footnotesize{$2$}}] $A,B\in\cC\Rightarrow A\cap B\in\cC$;
  \item[\textcircled{\footnotesize{$3$}}] $A\in\cC\Rightarrow A^c\in\cC_{\Sigma f}$
  \end{itemize}
where $A^c:=X\setminus A$ and $\cC_{\Sigma f}:=\Big\{\bigcup_{i=1}^n C_i: C_1,\cdots,C_n\in\cC \text{ are disjoint, } n\in\N\Big\}$.
\newline (The subscript $_{\Sigma f}$ means finite disjoint union.)
\item[(3)] $\cC$ is called an \textit{algebra} on $X$ if
  \begin{itemize}
  \item[\textcircled{\footnotesize{$1$}}] $\emptyset,X\in\cC$;
  \item[\textcircled{\footnotesize{$2$}}] $A\in\cC\Rightarrow A^c\in\cC$;
  \item[\textcircled{\footnotesize{$3$}}] $A,B\in\cC\Rightarrow A\cap B\in\cC$.
  \end{itemize}
\item[(4)] $\cC$ is called a \textit{sigma-algebra} on $X$ if
  \begin{itemize}
  \item[\textcircled{\footnotesize{$1$}}] $\emptyset,X\in\cC$;
  \item[\textcircled{\footnotesize{$2$}}] $A\in\cC\Rightarrow A^c\in\cC$;
  \item[\textcircled{\footnotesize{$3$}}] $A_1,A_2,A_3\cdots\in\cC\Rightarrow \bigcap_{n=1}^\infty A_n\in\cC$.
  \end{itemize}
\end{itemize}
\end{definition}

The following useful approximation lemma follows from Theorem 0.1 and Theorem 0.7  in \cite{W82}.
\begin{lemma}\label{approximation}
Let $(X,\cB,\mu)$ be a probability space, $\cC$ be a semi-algebra which generates the sigma-algebra $\cB$ and $\cA$ be the algebra generated by $\cC$. Then
\begin{itemize}
\item[(1)] $\cA=\cC_{\Sigma f}:=\Big\{\bigcup_{i=1}^n C_i: C_1,\cdots,C_n\in\cC \text{ are disjoint, } n\in\N\Big\}$;
\item[(2)] for each $B\in\cB$ and each $\epsilon>0$, there is some $A\in\cA$ with $\mu(A\triangle B)<\epsilon$.
\end{itemize}
\end{lemma}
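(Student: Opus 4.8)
The plan is to prove the two parts separately, treating (1) as a purely combinatorial statement about the set system and (2) as a measure-theoretic approximation argument built on the monotone class theorem.

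For part (1), I would first verify directly that $\cC_{\Sigma f}$ is itself an algebra containing $\cC$, and then invoke the minimality of $\cA$. Since $\emptyset\in\cC$, the semi-algebra complement axiom gives $X=\emptyset^c\in\cC_{\Sigma f}$, and trivially $\cC\subseteq\cC_{\Sigma f}$, so the first algebra axiom holds. Closure under finite intersection is immediate: if $A=\bigcup_{i=1}^nC_i$ and $B=\bigcup_{j=1}^kD_j$ are disjoint unions of members of $\cC$, then $A\cap B=\bigcup_{i,j}(C_i\cap D_j)$ is again a disjoint union whose pieces lie in $\cC$ by the intersection axiom for semi-algebras. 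For closure under complementation I would write $A^c=\bigcap_{i=1}^nC_i^c$; each $C_i^c$ lies in $\cC_{\Sigma f}$ by the complement axiom, and the finite intersection of members of $\cC_{\Sigma f}$ stays in $\cC_{\Sigma f}$ by the intersection closure just established. Hence $\cC_{\Sigma f}$ is an algebra, so $\cA\subseteq\cC_{\Sigma f}$ because $\cA$ is the smallest algebra containing $\cC$. The reverse inclusion $\cC_{\Sigma f}\subseteq\cA$ holds since any algebra containing $\cC$ is closed under finite unions and therefore contains every finite disjoint union of members of $\cC$.

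For part (2), I would introduce the collection of well-approximable sets
$$\cD:=\Big\{B\in\cB:\text{for every }\epsilon>0\text{ there is }A\in\cA\text{ with }\mu(A\triangle B)<\epsilon\Big\}$$
and show $\cD=\cB$. Clearly $\cA\subseteq\cD\subseteq\cB$, and since $\cA$ generates $\cB$, it suffices by the monotone class theorem (the smallest monotone class containing the algebra $\cA$ coincides with $\cB$) to check that $\cD$ is a monotone class. For an increasing sequence $B_1\subseteq B_2\subseteq\cdots$ in $\cD$ with union $B$, continuity of $\mu$ from below supplies $N$ with $\mu(B\setminus B_N)<\epsilon/2$; choosing $A\in\cA$ with $\mu(A\triangle B_N)<\epsilon/2$ and using the inclusion $A\triangle B\subseteq(A\triangle B_N)\cup(B\setminus B_N)$ yields $\mu(A\triangle B)<\epsilon$, so $B\in\cD$. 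The decreasing case is symmetric, using continuity of $\mu$ from above, which is legitimate because $\mu$ is a probability (hence finite) measure. Thus $\cD$ is a monotone class containing $\cA$, whence $\cB\subseteq\cD$, and equality follows.

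The algebra in part (1) is genuinely elementary; the only place needing care is the complementation step, where the semi-algebra axioms must be chained in the correct order. The one substantive ingredient is the monotone class theorem invoked in part (2); granting that, the approximation estimates are short and rely only on the two-sided continuity of the finite measure $\mu$. Since the statement is explicitly attributed to Theorems~0.1 and~0.7 of \cite{W82}, the role of this argument is mainly to record the standard proof in the notation of the present paper rather than to surmount any real obstacle.
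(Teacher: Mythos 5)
Your proposal is correct, and both halves are complete. Note first that the paper contains no internal proof of this lemma to compare against: it is imported wholesale from Theorems 0.1 and 0.7 of \cite{W82}, so your argument is filling a gap the paper deliberately left to the literature. Your part (1) is the standard verification that $\cC_{\Sigma f}$ is an algebra containing $\cC$, and you handle the only delicate step (complementation via $A^c=\bigcap_{i=1}^n C_i^c$, with each $C_i^c\in\cC_{\Sigma f}$ by the semi-algebra axiom and the intersection then controlled by the closure under pairwise intersections you proved first) in the correct order; together with the observation that any algebra containing $\cC$ contains all finite unions of its members, this gives $\cA=\cC_{\Sigma f}$. For part (2), your route differs from the usual textbook derivations of Walters' Theorem 0.7, which proceed either via the Carath\'eodory outer-measure description of $\mu(B)$ as an infimum over countable covers by algebra elements (truncated to a finite union), or by showing directly that the family of approximable sets is a sigma-algebra; you instead run a monotone class argument on the collection $\cD$ of well-approximable sets. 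This choice is well adapted to the present paper, since the Monotone Class Theorem is already recorded as Theorem \ref{monotone class theorem} and is used elsewhere in Section 4, so your proof stays entirely inside the paper's toolkit. The two estimates you need are exactly right: for $B_n\uparrow B$, continuity from below plus $A\triangle B\subseteq(A\triangle B_N)\cup(B\setminus B_N)$; for $B_n\downarrow B$, the mirror-image inclusion plus continuity from above, which is legitimate precisely because $\mu$ is a finite measure. One small point worth making explicit: invoking the Monotone Class Theorem requires $\sigma(\cA)=\cB$, which follows from $\cC\subseteq\cA\subseteq\sigma(\cC)$ together with the hypothesis that $\cC$ generates $\cB$; this is implicit in your phrase ``since $\cA$ generates $\cB$'' and deserves one line.
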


In order to extend some properties from a small family to a larger one in some proofs in Section 4, we recall the following well known Monotone Class Theorem (see for example \cite{F99}).

\begin{theorem}[Monotone Class Theorem]\label{monotone class theorem}
Let $\cA$ be an algebra. Then the smallest monotone class containing $\cA$ is precisely the smallest sigma-algebra containing $\cA$.
\end{theorem}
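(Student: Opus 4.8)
The plan is to let $\cM$ denote the smallest monotone class containing $\cA$ and $\cS$ denote the smallest sigma-algebra containing $\cA$, and to prove the two inclusions $\cM\subseteq\cS$ and $\cS\subseteq\cM$ separately. The inclusion $\cM\subseteq\cS$ is immediate: every sigma-algebra is in particular a monotone class, since closure under countable unions and intersections certainly implies closure under the monotone limits in axioms \textcircled{\footnotesize{$1$}} and \textcircled{\footnotesize{$2$}}. Hence $\cS$ is a monotone class containing $\cA$, and therefore contains the smallest such class $\cM$. All the work lies in the reverse inclusion, for which it suffices to show that $\cM$ is itself a sigma-algebra; since it then contains $\cA$, it must contain $\cS$.

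To reduce matters, I would first record the elementary fact that a monotone class which is also an algebra is automatically a sigma-algebra: given $A_1,A_2,A_3\cdots\in\cM$, the partial intersections $B_n:=\bigcap_{k=1}^nA_k$ lie in $\cM$ by finite-intersection closure, form a decreasing sequence, and hence have intersection $\bigcap_n A_n=\bigcap_n B_n\in\cM$ by the monotone class axiom. Thus everything reduces to proving that $\cM$ is an algebra, i.e.\ that $\emptyset,X\in\cM$ and that $\cM$ is closed under complements and under finite intersections. The first is clear, since $\cA\subseteq\cM$ and $\emptyset,X\in\cA$.

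For closure under complements I would use the \emph{good sets} principle: set $\cM':=\{A\in\cM:A^c\in\cM\}$. Since $\cA$ is an algebra contained in $\cM$, we have $\cA\subseteq\cM'$; and $\cM'$ is a monotone class, because for an increasing sequence $A_n\in\cM'$ the complements $A_n^c$ decrease, so from $\bigcup_n A_n\in\cM$ and $(\bigcup_n A_n)^c=\bigcap_n A_n^c\in\cM$ we conclude $\bigcup_n A_n\in\cM'$, and symmetrically for decreasing sequences. Minimality of $\cM$ then forces $\cM\subseteq\cM'$, whence $\cM'=\cM$ and $\cM$ is closed under complements.

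The genuinely delicate step, and the one I expect to be the main obstacle, is closure under finite intersections, which cannot be obtained in a single good-sets pass and requires a two-stage bootstrap. For each $A\in\cM$ put $\cM_A:=\{B\in\cM:A\cap B\in\cM\}$; the same monotone-limit computation as above shows $\cM_A$ is always a monotone class. First I would fix $A\in\cA$: then for every $B\in\cA$ we have $A\cap B\in\cA\subseteq\cM$, so $\cA\subseteq\cM_A$, and minimality gives $\cM_A=\cM$. This proves that $A\cap B\in\cM$ whenever $A\in\cA$ and $B\in\cM$. Crucially, this intermediate statement is symmetric in $A$ and $B$, so for an arbitrary \emph{fixed} $B\in\cM$ it says precisely that $\cA\subseteq\cM_B$; since $\cM_B$ is again a monotone class, minimality yields $\cM_B=\cM$, that is, $A\cap B\in\cM$ for all $A,B\in\cM$. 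Hence $\cM$ is an algebra, therefore a sigma-algebra by the reduction above, and the reverse inclusion $\cS\subseteq\cM$ follows. The subtle point to get right is the order of quantifiers in this bootstrap: one must first establish the mixed $\cA$-versus-$\cM$ intersection property before the symmetry lets arbitrary elements of $\cM$ be fed into the second good-sets argument.
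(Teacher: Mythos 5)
Your proof is correct and complete. Note that the paper itself does not prove this statement at all: it simply recalls the Monotone Class Theorem as a well-known result and cites Folland's \emph{Real Analysis} \cite{F99}, so there is no in-paper argument to compare against. What you have written is the standard textbook proof that the citation points to: the easy inclusion $\cM\subseteq\cS$ because every sigma-algebra is a monotone class, followed by the good-sets principle to show the minimal monotone class $\cM$ is closed under complements, the two-stage bootstrap (first $A\in\cA$, $B\in\cM$, then by symmetry $A,B\in\cM$) for closure under finite intersections, and the observation that a monotone class which is an algebra is a sigma-algebra via decreasing partial intersections. Your argument also meshes correctly with the paper's particular formulations of ``algebra'' and ``sigma-algebra'' (closure under complements and countable \emph{intersections}), since your final reduction produces exactly countable intersections as monotone limits. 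In short: the proposal supplies, correctly, the proof the paper chose to outsource.
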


Besides, the following versions of some well known theorems (see for examples \cite{K14,W82,Y95}) will be used in Section 4.

\begin{theorem}[Carath\'eodory Measure Extension Theorem]
Let $\cC$ be a semi-algebra on $X$ and $\mu:\cC\to[0,+\infty]$ such that for all sets $A\in\cC$ for which there exists a countable decomposition $A=\cup_{i=1}^\infty A_i$ in disjoint sets $A_i\in\cC$ for $i\in\N$, we have $\mu(A)=\sum_{i=1}^\infty\mu(A_i)$. Then $\mu$ can be extended to become a measure $\mu'$ on $sig(\cC)$ (the smallest sigma-algebra containing $\cC$). That is, there exists a measure $\mu':sig(\cC)\to[0,+\infty]$ such that its restriction to $\cC$ is equal to $\mu$ (i.e., $\mu'|_{\cC}=\mu$). Moreover, if $X\in\cC$ and $\mu(X)<+\infty$, then the extension $\mu'$ is unique.
\end{theorem}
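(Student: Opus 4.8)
The plan is to carry out the classical Carath\'eodory outer-measure construction in four stages: extend $\mu$ from the semi-algebra $\cC$ to the generated algebra $\cA$, verify that this extension is countably additive on $\cA$, promote it to an outer measure whose measurable sets contain $sig(\cC)$, and finally establish uniqueness via the Monotone Class Theorem.

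\emph{Stage 1: extension to the algebra.} By Lemma \ref{approximation}(1) the algebra generated by $\cC$ is $\cA=\cC_{\Sigma f}$, so every $A\in\cA$ is a finite disjoint union $A=\bigcup_{i=1}^n C_i$ with $C_i\in\cC$. I would define $\mu'(A):=\sum_{i=1}^n\mu(C_i)$ and first show this is well-defined. Given two disjoint $\cC$-decompositions $\bigcup_{i=1}^n C_i=\bigcup_{j=1}^m D_j$, each $C_i\cap D_j$ lies in $\cC$ (closure under intersection), and $C_i=\bigcup_j(C_i\cap D_j)$, $D_j=\bigcup_i(C_i\cap D_j)$ are disjoint decompositions. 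Applying the finite-additivity case of the hypothesis (obtained from the stated countable case by padding with copies of $\emptyset$, after noting $\mu(\emptyset)=0$) gives $\sum_i\mu(C_i)=\sum_{i,j}\mu(C_i\cap D_j)=\sum_j\mu(D_j)$. A routine check then shows $\mu'$ is finitely additive on $\cA$.

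\emph{Stage 2: countable additivity on the algebra.} This is the heart of the argument and the step I expect to be the main obstacle, since it is where the hypothesis on $\cC$ must be used precisely. I would show that whenever $A\in\cA$ is a disjoint countable union $A=\bigcup_{k=1}^\infty A_k$ with $A_k\in\cA$, one has $\mu'(A)=\sum_k\mu'(A_k)$. Superadditivity ($\mu'(A)\ge\sum_{k=1}^N\mu'(A_k)$ for every finite $N$, hence for the full sum) follows from finite additivity and monotonicity. Subadditivity is more delicate: I would refine all the $A_k$ and $A$ into semi-algebra pieces, reorganize $A$ as a genuine countable disjoint union of $\cC$-sets, and invoke the countable-additivity hypothesis on each such piece, combining with finite additivity of $\mu'$. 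Matching the given $\cC$-additivity with arbitrary countable disjoint unions inside the algebra is the technically demanding point.

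\emph{Stage 3: outer measure and restriction.} With $\mu'$ now a premeasure on $\cA$, I would define
$$\mu^*(E):=\inf\Big\{\sum_{i=1}^\infty\mu'(A_i): E\subseteq\bigcup_{i=1}^\infty A_i,\ A_i\in\cA\Big\}$$
for every $E\subseteq X$, and verify the standard facts: $\mu^*$ is an outer measure; the collection $\cM$ of Carath\'eodory-measurable sets (those $E$ with $\mu^*(T)=\mu^*(T\cap E)+\mu^*(T\cap E^c)$ for all $T\subseteq X$) is a sigma-algebra on which $\mu^*$ is countably additive; $\cA\subseteq\cM$; and $\mu^*|_{\cA}=\mu'$, this last equality relying on Stage 2. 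Since $\cM$ is a sigma-algebra containing $\cC$, it contains $sig(\cC)$, and $\mu^*|_{sig(\cC)}$ is the desired measure extending $\mu$.

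\emph{Stage 4: uniqueness.} Suppose $X\in\cC$ with $\mu(X)<\infty$, and let $\nu$ be any measure on $sig(\cC)$ with $\nu|_{\cC}=\mu$. Both extensions are finite with total mass $\mu(X)$ and, by Stage 1, agree on $\cA$. I would set $\cD:=\{B\in sig(\cC):\mu^*(B)=\nu(B)\}$ and check that $\cD$ is a monotone class (using continuity from below and, since the measures are finite, from above) that contains the algebra $\cA$. By the Monotone Class Theorem (Theorem \ref{monotone class theorem}), $\cD$ contains the smallest sigma-algebra containing $\cA$, namely $sig(\cC)$, whence the two extensions coincide.
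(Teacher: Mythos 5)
The paper never proves this theorem: it is stated as background, with the reader referred to \cite{K14,W82,Y95}, so there is no in-paper proof to compare against --- only the standard textbook argument of those references. Your proposal is precisely that standard argument (extension of $\mu$ to the generated algebra $\cC_{\Sigma f}$ via Lemma \ref{approximation}(1), verification of the premeasure property, the Carath\'eodory outer-measure construction, and uniqueness in the finite case via the Monotone Class Theorem, Theorem \ref{monotone class theorem}), and each stage is sound. In particular the step you flag as delicate (Stage 2) does go through: writing $A=\bigsqcup_{i=1}^{n}C_i$ and $A_k=\bigsqcup_{j}D_{k,j}$ with all pieces in $\cC$, each $C_i=\bigsqcup_{k,j}(C_i\cap D_{k,j})$ is a countable disjoint $\cC$-decomposition, so the hypothesis gives $\mu(C_i)=\sum_{k,j}\mu(C_i\cap D_{k,j})$; rearranging the nonnegative double series and applying finite additivity to $D_{k,j}=\bigsqcup_{i}(C_i\cap D_{k,j})$ yields $\mu'(A)=\sum_k\mu'(A_k)$. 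One pedantic remark: the statement as printed never assumes $\mu(\emptyset)=0$, and applying the hypothesis to $\emptyset=\emptyset\sqcup\emptyset\sqcup\cdots$ only forces $\mu(\emptyset)\in\{0,+\infty\}$; in the degenerate case $\mu(\emptyset)=+\infty$ one gets $\mu\equiv+\infty$ on $\cC$ and no measure can restrict to $\mu$, so your parenthetical ``after noting $\mu(\emptyset)=0$'' is really a harmless, universally made implicit assumption rather than a deduction.
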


\begin{theorem}[Dominated Convergence Theorem]\label{convergence}
Let $(X,\cF,\mu)$ be a probability space and $\{f_n\}_{n\in\N}$ be a sequence of real-valued measurable functions on $X$ satisfying
$$\lim_{n\to\infty}f_n(x)=f(x)\quad\text{for }\mu\text{-almost every }x\in X.$$
If there exists a real-valued integrable function $g$ on $X$ such that for all $n\in\N$, $|f_n(x)|\le g(x)$ for $\mu$-almost every $x\in X$, then $f$ is integrable and
$$\lim_{n\to\infty}\int f_n d\mu=\int f d\mu.$$
\end{theorem}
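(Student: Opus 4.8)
The plan is to deduce this from Fatou's Lemma, which is the standard route to the Dominated Convergence Theorem. First I would settle the integrability of the limit $f$. Since $|f_n(x)|\le g(x)$ for $\mu$-almost every $x$ and $f_n(x)\to f(x)$ for $\mu$-almost every $x$, letting $n\to\infty$ in the inequality gives $|f(x)|\le g(x)$ for $\mu$-almost every $x$. As $f$ is measurable (being an almost-everywhere pointwise limit of measurable functions) and is dominated by the integrable function $g$, it is integrable.

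Next comes the key step: apply Fatou's Lemma to two families of nonnegative functions obtained from the domination hypothesis, namely $g+f_n\ge0$ and $g-f_n\ge0$ (both valid $\mu$-almost everywhere). Using $\liminf_{n\to\infty}(g+f_n)=g+f$ almost everywhere, Fatou applied to $g+f_n$ yields
\[
\int (g+f)\,d\mu\le\liminf_{n\to\infty}\int (g+f_n)\,d\mu=\int g\,d\mu+\liminf_{n\to\infty}\int f_n\,d\mu,
\]
and cancelling the \emph{finite} quantity $\int g\,d\mu$ from both sides gives $\int f\,d\mu\le\liminf_{n\to\infty}\int f_n\,d\mu$.

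Symmetrically, applying Fatou to $g-f_n$ and using $\liminf_{n\to\infty}(g-f_n)=g-f$ almost everywhere produces
\[
\int (g-f)\,d\mu\le\liminf_{n\to\infty}\int (g-f_n)\,d\mu=\int g\,d\mu-\limsup_{n\to\infty}\int f_n\,d\mu,
\]
whence $\limsup_{n\to\infty}\int f_n\,d\mu\le\int f\,d\mu$. Combining the two inequalities gives
\[
\limsup_{n\to\infty}\int f_n\,d\mu\le\int f\,d\mu\le\liminf_{n\to\infty}\int f_n\,d\mu,
\]
which forces $\lim_{n\to\infty}\int f_n\,d\mu$ to exist and to equal $\int f\,d\mu$, as claimed.

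The main obstacle is that the entire argument rests on Fatou's Lemma, which is not among the results recorded in the excerpt and which in turn follows from the Monotone Convergence Theorem. Thus the genuine work, were one to be self-contained, would lie in first establishing the Monotone Convergence Theorem and deducing Fatou's Lemma; after that, the three displays above are routine bookkeeping and the only subtlety is ensuring $\int g\,d\mu<+\infty$ so that the cancellations are legitimate. Since the statement is entirely classical, the cleaner option adopted in the paper is simply to cite it (as in \cite{K14,W82,Y95}) rather than reprove these foundational convergence theorems.
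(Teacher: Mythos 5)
Your proof is correct, and the first thing to note is that the paper does not prove this statement at all: the Dominated Convergence Theorem appears in Section 2 as one of several ``well known theorems'' stated for later use, with a pointer to the literature (\cite{K14,W82,Y95}), and it is then invoked as a black box in step (2) of the proof of Theorem \ref{lambda p}. So there is no internal proof to compare yours against, and your closing remark --- that the paper's route is simply citation --- is accurate. What you give is the standard textbook argument, and it is sound: $|f|\le g$ $\mu$-a.e.\ together with measurability of the a.e.\ limit makes $f$ integrable; Fatou's Lemma applied to the nonnegative families $g+f_n$ and $g-f_n$ yields $\int f\,d\mu\le\liminf_{n\to\infty}\int f_n\,d\mu$ and $\limsup_{n\to\infty}\int f_n\,d\mu\le\int f\,d\mu$; and cancelling $\int g\,d\mu$ on both sides is legitimate precisely because $g$ is integrable, a subtlety you correctly flag rather than gloss over. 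The only dependency you do not discharge is Fatou's Lemma itself (and, behind it, the Monotone Convergence Theorem), but you state this openly, and since Fatou is strictly more elementary than the result being proved, this is an acceptable foundation for a proof that the paper itself delegates entirely to the references.
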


\begin{theorem}[Vitali-Hahn-Saks Theorem]\label{measures limit}
Let $(X,\cF,\mu)$ be a probability space and $\{\lambda_n\}_{n\in\N}$ be a sequence of probability measures such that $\lambda_n\ll\mu$ for all $n\in\N$. If the finite $\lim_{n\to\infty}\lambda_n(B)=\lambda(B)$ exists for every $B\in\cF$, then $\lambda$ is countable additive on $\cF$.
\end{theorem}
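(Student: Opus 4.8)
The plan is to reduce the whole statement to a single uniform estimate — namely that the family $\{\lambda_n\}_{n\in\N}$ is \emph{uniformly} absolutely continuous with respect to $\mu$, in the sense that for every $\epsilon>0$ there is $\delta>0$ with $\mu(E)<\delta\Rightarrow\sup_n\lambda_n(E)\le 3\epsilon$ — and then to deduce countable additivity of $\lambda$ from this together with continuity of the finite measure $\mu$ at $\emptyset$. The natural arena is the measure-algebra metric space: let $\cF/\!\sim$ be the set of classes $[A]$ under $A\sim B\iff\mu(A\triangle B)=0$, equipped with $d([A],[B]):=\mu(A\triangle B)$. A standard fact (completeness of $L^1$) is that $(\cF/\!\sim,d)$ is a complete metric space, so Baire's theorem is available. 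Since $\lambda_n\ll\mu$, each $\lambda_n$ is well defined on classes, and from $|\lambda_n(A)-\lambda_n(B)|\le\lambda_n(A\triangle B)$ together with $\lambda_n\ll\mu$ each $\lambda_n$ is continuous on $\cF/\!\sim$.

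The heart of the argument is a Baire category (gliding hump) step. Fixing $\epsilon>0$, I would set $F_k:=\{[A]:|\lambda_m(A)-\lambda_n(A)|\le\epsilon\text{ for all }m,n\ge k\}$. Each $F_k$ is closed by continuity of the $\lambda_j$, and $\bigcup_k F_k$ is the whole space because $(\lambda_n(A))_n$ converges, hence is Cauchy, for each fixed $A$. Baire's theorem then yields some $k_0$ and a ball $B([A_0],r)\subseteq F_{k_0}$. The delicate point is the transfer of this estimate, valid only near a single set $A_0$, to all $\mu$-small sets: for any $E$ with $\mu(E)<r$ one checks $\mu((A_0\cup E)\triangle A_0)=\mu(E\setminus A_0)<r$ and $\mu((A_0\setminus E)\triangle A_0)=\mu(A_0\cap E)<r$, so both $A_0\cup E$ and $A_0\setminus E$ lie in $F_{k_0}$; writing $E=(A_0\cup E)\setminus(A_0\setminus E)$ with $A_0\setminus E\subseteq A_0\cup E$ gives $|\lambda_m(E)-\lambda_n(E)|\le 2\epsilon$ for all $m,n\ge k_0$ whenever $\mu(E)<r$.

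From here I would upgrade to full uniform absolute continuity. As the finitely many measures $\lambda_1,\dots,\lambda_{k_0}$ are each absolutely continuous with respect to $\mu$, there is $\delta_0>0$ with $\mu(E)<\delta_0\Rightarrow\lambda_j(E)<\epsilon$ for $j\le k_0$. Taking $\delta:=\min\{r,\delta_0\}$, for $\mu(E)<\delta$ and $n\ge k_0$ one estimates $\lambda_n(E)\le|\lambda_n(E)-\lambda_{k_0}(E)|+\lambda_{k_0}(E)<2\epsilon+\epsilon=3\epsilon$, while for $n<k_0$ one has $\lambda_n(E)<\epsilon$ directly; hence $\sup_n\lambda_n(E)\le 3\epsilon$. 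Finally, $\lambda$ is nonnegative and finitely additive as a pointwise limit of probability measures, so it remains only to verify continuity at $\emptyset$. Given $B_k\downarrow\emptyset$, continuity from above of the probability measure $\mu$ gives $\mu(B_k)\to0$, so eventually $\mu(B_k)<\delta$; then $\lambda_n(B_k)\le 3\epsilon$ uniformly in $n$, and letting $n\to\infty$ yields $\lambda(B_k)\le 3\epsilon$, whence $\lambda(B_k)\to0$ and $\lambda$ is countably additive.

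The main obstacle is precisely the Baire category step and its transfer lemma: producing a $k_0$ for which $F_{k_0}$ has nonempty interior, and then converting an estimate localized at a single class $[A_0]$ into one valid for every sufficiently $\mu$-small set. Everything preceding it (completeness of the metric, continuity of each $\lambda_n$) is routine, and everything following it (passing from uniform absolute continuity to countable additivity) is a short soft argument using only that $\mu$ is a finite, countably additive measure.
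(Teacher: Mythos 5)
Your proposal is correct, but there is nothing in the paper to compare it against: the paper does not prove this statement at all. Theorem \ref{measures limit} is listed among the ``well known theorems'' imported from the literature (with citations to Klenke, Walters and Yosida) and is then used exactly once, in step (3) of the proof of Theorem \ref{lambda p}, to conclude that the set function $\lambda_p=\lim_{n\to\infty}\lambda_p^n$ is countably additive. What you have written is essentially the classical Saks argument, which is in particular the proof given in the cited reference of Yosida: pass to the measure algebra $\cF/\!\sim$ with metric $d([A],[B])=\mu(A\triangle B)$, invoke its completeness so that Baire category applies to the closed sets $F_k$, extract a ball $B([A_0],r)\subseteq F_{k_0}$, transfer the oscillation estimate from that ball to all $\mu$-small sets via the identity $E=(A_0\cup E)\setminus(A_0\setminus E)$ with $A_0\setminus E\subseteq A_0\cup E$, combine with absolute continuity of the finitely many measures $\lambda_1,\dots,\lambda_{k_0}$ to get uniform absolute continuity of the whole family, and deduce countable additivity of the finitely additive limit $\lambda$ from continuity at $\emptyset$ of the finite measure $\mu$. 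All the individual steps check out: the two symmetric-difference computations are right, the decomposition of $E$ is right, and the final soft argument (finite additivity plus continuity at $\emptyset$ implies countable additivity on a sigma-algebra) is standard. The only point worth making explicit is that you twice use the $\epsilon$--$\delta$ form of absolute continuity --- once to get continuity of each $\lambda_n$ on the measure algebra, and once to choose $\delta_0$ for $\lambda_1,\dots,\lambda_{k_0}$; the equivalence of this with $\lambda_n\ll\mu$ is valid precisely because each $\lambda_n$ is a finite measure, and it deserves a one-line lemma in a fully written-out version, but it is routine. In short: the paper buys the result by citation, and your proof reconstructs, correctly, the very argument that sits behind that citation.
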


\begin{theorem}[Birkhoff Ergodic Theorem]
Let $(X,\cF,\mu,T)$ be a measure-preserving dynamical system where the probability measure $\mu$ is ergodic with respect to $T$. Then for any real-valued integrable function $f:X\to\R$, we have
$$\lim_{n\to\infty}\frac{1}{n}\sum_{k=0}^{n-1}f(T^kx)=\int f d\mu$$
for $\mu$-almost every $x\in X$.
\end{theorem}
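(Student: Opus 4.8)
The plan is to prove first that the Birkhoff averages converge $\mu$-almost everywhere to a $T$-invariant function, then use ergodicity to force this limit to be constant, and finally identify the constant as $\int f\,d\mu$. Write $A_nf(x):=\frac1n\sum_{k=0}^{n-1}f(T^kx)$ for the ergodic averages, and set $\overline{f}(x):=\limsup_{n\to\infty}A_nf(x)$ and $\underline{f}(x):=\liminf_{n\to\infty}A_nf(x)$. Since $A_nf(Tx)-A_nf(x)=\frac1n\big(f(T^nx)-f(x)\big)\to0$ pointwise, both $\overline{f}$ and $\underline{f}$ are $T$-invariant, so it suffices to show $\overline{f}=\underline{f}$ almost everywhere.

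The key tool is the \emph{Maximal Ergodic Theorem}: for $g\in L^1(\mu)$, writing $g_n:=\sum_{k=0}^{n-1}g\circ T^k$ and $S_N:=\max\{0,g_1,\dots,g_N\}$, one has $\int_{\{S_N>0\}}g\,d\mu\ge0$. I would prove this by the telescoping argument of Garsia. From the identity $g_n=g+g_{n-1}\circ T$ and $S_N\circ T\ge0$ one gets $g_n\le g+S_N\circ T$ for every $1\le n\le N$, hence $S_N\le g+S_N\circ T$ on the set $E_N:=\{S_N>0\}$, that is $g\ge S_N-S_N\circ T$ there. Integrating over $E_N$ and using $S_N=0$ off $E_N$ together with the $T$-invariance of $\mu$ gives $\int_{E_N}g\,d\mu\ge\int_X S_N\,d\mu-\int_X S_N\circ T\,d\mu=0$.

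Next I would use this to kill the bad sets. For rationals $\alpha<\beta$ put $E_{\alpha,\beta}:=\{x:\underline{f}(x)<\alpha<\beta<\overline{f}(x)\}$, which is $T$-invariant. Applying the maximal inequality to $(f-\beta)\1_{E_{\alpha,\beta}}$ and to $(\alpha-f)\1_{E_{\alpha,\beta}}$ (the invariance of $E_{\alpha,\beta}$ lets one run the dynamics inside it) yields $\beta\,\mu(E_{\alpha,\beta})\le\int_{E_{\alpha,\beta}}f\,d\mu\le\alpha\,\mu(E_{\alpha,\beta})$; since $\alpha<\beta$ this forces $\mu(E_{\alpha,\beta})=0$. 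Taking the countable union over rational pairs $\alpha<\beta$ shows $\overline{f}=\underline{f}$ almost everywhere, so $A_nf$ converges a.e. to a $T$-invariant limit $\widehat{f}$. By ergodicity every $T$-invariant function is constant a.e., so $\widehat{f}\equiv c$ for some constant $c$.

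It remains to show $c=\int f\,d\mu$. When $f$ is bounded this is immediate: $|A_nf|\le\|f\|_\infty$, so the Dominated Convergence Theorem (Theorem \ref{convergence}) applies, and using $\int A_nf\,d\mu=\int f\,d\mu$ (a consequence of the $T$-invariance of $\mu$) we obtain $c=\int\widehat{f}\,d\mu=\lim_n\int A_nf\,d\mu=\int f\,d\mu$. For a general integrable $f$ I would approximate by the bounded truncations $f^{(M)}:=\max\{-M,\min\{f,M\}\}$, apply the bounded case to each $f^{(M)}$, and control the error $\limsup_n|A_n(f-f^{(M)})|$ by the maximal inequality applied to $|f-f^{(M)}|$; letting $M\to\infty$ with $\|f-f^{(M)}\|_{L^1}\to0$ recovers the identity for $f$. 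The main obstacle is exactly this last step: for unbounded $f$ the averages $A_nf$ need not be dominated, so one cannot pass to the limit under the integral directly, and the maximal ergodic inequality is precisely what supplies the uniform tail control needed to close the $L^1$ approximation.
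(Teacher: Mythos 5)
The paper never proves this statement: the Birkhoff Ergodic Theorem is imported as a known result (the sentence introducing it points to \cite{K14,W82,Y95}) and is only \emph{used}, in step (7) of the proof of Theorem \ref{lambda p} and in the proof of Lemma \ref{lowerbound}. So there is no in-paper argument to compare yours against. What you have written is the classical textbook proof: Garsia's telescoping proof of the maximal ergodic theorem, the sets $E_{\alpha,\beta}=\{\underline{f}<\alpha<\beta<\overline{f}\}$ to get almost-everywhere convergence, ergodicity to make the limit constant, and a truncation argument to identify the constant as $\int f\,d\mu$. The architecture is correct, and the two delicate points are handled properly: the maximal inequality is run inside the invariant set $E_{\alpha,\beta}$ to get $\beta\,\mu(E_{\alpha,\beta})\le\int_{E_{\alpha,\beta}}f\,d\mu\le\alpha\,\mu(E_{\alpha,\beta})$, and for the identification step the maximal inequality applied to $h:=f-f^{(M)}$ gives $\lambda\,\mu\bigl(\sup_n A_n|h|>\lambda\bigr)\le\|h\|_1$, which, since $\lim_n A_nh$ is a.e.\ constant by ergodicity, forces that constant to be at most $\lambda$ whenever $\|h\|_1<\lambda$; letting $M\to\infty$ closes the argument.

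One justification is wrong as stated, although the claim it supports is true. You argue that $\overline{f}$ and $\underline{f}$ are $T$-invariant because $A_nf(Tx)-A_nf(x)=\frac{1}{n}\bigl(f(T^nx)-f(x)\bigr)\to0$ pointwise. For an unbounded integrable $f$ there is no reason why $f(T^nx)/n\to0$ at \emph{every} point $x$; this holds only almost everywhere (by Borel--Cantelli and the invariance of $\mu$). The distinction matters here, because your $E_{\alpha,\beta}$ argument needs the \emph{exact} invariance $T^{-1}E_{\alpha,\beta}=E_{\alpha,\beta}$ (orbits starting in $E_{\alpha,\beta}$ must stay there, and the truncated sums must vanish identically off it); with only a.e.\ invariance you would have to modify the set before applying the maximal inequality. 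The standard repair is one line: write
$$\frac{1}{n}\sum_{k=1}^{n}f(T^kx)=\frac{n+1}{n}\,A_{n+1}f(x)-\frac{f(x)}{n},$$
and note that only the fixed finite number $f(x)$ is divided by $n$, so taking $\limsup$ and $\liminf$ gives $\overline{f}(Tx)=\overline{f}(x)$ and $\underline{f}(Tx)=\underline{f}(x)$ for every $x$. With that repair (and a remark, via Fatou, that the a.e.\ limit is finite a.e.), your proof is complete and is exactly the argument found in the sources the paper cites.
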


\section{Digit occurrence parameters}

The digit occurrence parameters and their properties studied in this section will be used in Section 4 and Section 6.

\begin{notation}[Digit occurrence parameters] Let $m\ge3$ be an integer. For any $w\in\Lambda_m^*$, write
$$\cN^m_0(w) := \Big\{k:1\le k\le |w|, w_k = 0 \text{ and } w_1\dots w_{k-1} 1\in\Lambda_m^*\Big\},$$
$$\cN^m_1(w) := \Big\{k:1\le k\le |w|, w_k = 1 \text{ and } w_1\dots w_{k-1} 0\in\Lambda_m^*\Big\},$$
and let
$$N^m_0(w) := \sharp \cN^m_0(w)\quad\text{and}\quad N^m_1(w) := \sharp \cN^m_1(w)$$
where $\sharp\cN$ denotes the cardinality of the set $\cN$.
\end{notation}

\begin{proposition}\label{parameter}
Let $m\ge3$ be an integer and $w,v \in \Lambda_m^*$ such that $wv \in \Lambda_m^*$. Then
\begin{itemize}
\item[\emph{(1)}] $N^m_0(w)+N^m_0(v)-1 \le N^m_0(wv) \le N^m_0(w) + N^m_0(v)$;
\item[\emph{(2)}] $N^m_1(w)+N^m_1(v)-1 \le N^m_1(wv) \le N^m_1(w) + N^m_1(v)$.
\end{itemize}
\end{proposition}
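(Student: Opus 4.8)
The plan is to analyze $\cN^m_0(wv)$ by splitting its index set according to whether a position falls in the $w$-block or the $v$-block, and to track exactly how prepending $w$ to a prefix of $v$ can fail the defining membership condition only through a single ``spillover'' of trailing $1$'s. Part (2) will then follow by the symmetry interchanging the two digits $0$ and $1$.

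First I would write $\cN^m_0(wv)=A\sqcup B$, where $A$ consists of the positions $k$ with $1\le k\le|w|$ and $B$ of the positions $k=|w|+j$ with $1\le j\le|v|$. For $k\le|w|$ one has $(wv)_k=w_k$ and $(wv)_1\cdots(wv)_{k-1}=w_1\cdots w_{k-1}$, so the condition defining $\cN^m_0(wv)$ reduces verbatim to that defining $\cN^m_0(w)$; hence $A=\cN^m_0(w)$ and $\sharp A=N^m_0(w)$ exactly. For $k=|w|+j$ one has $(wv)_k=v_j$, and the membership condition becomes $v_j=0$ together with $wv_1\cdots v_{j-1}1\in\Lambda_m^*$. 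I would compare this against the condition for $j\in\cN^m_0(v)$, namely $v_j=0$ and $v_1\cdots v_{j-1}1\in\Lambda_m^*$. Since $v_1\cdots v_{j-1}1$ is a subword of $wv_1\cdots v_{j-1}1$, the latter lying in $\Lambda_m^*$ forces the former to lie in $\Lambda_m^*$ as well; thus $B$ (shifted by $|w|$) is a subset of $\cN^m_0(v)$. This gives $\sharp B\le N^m_0(v)$ and hence the upper bound $N^m_0(wv)\le N^m_0(w)+N^m_0(v)$.

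For the lower bound I would show that at most one index of $\cN^m_0(v)$ is dropped in passing to $B$. Suppose $j\in\cN^m_0(v)$ but $wv_1\cdots v_{j-1}1\notin\Lambda_m^*$. Since $wv\in\Lambda_m^*$, the prefix $wv_1\cdots v_{j-1}$ already lies in $\Lambda_m^*$, so appending a single $1$ can create only the forbidden block $1^m$; that is, $wv_1\cdots v_{j-1}$ must end in $1^{m-1}$. On the other hand, $v_1\cdots v_{j-1}1\in\Lambda_m^*$ forces the trailing run of $1$'s inside $v_1\cdots v_{j-1}$ to have length at most $m-2$. Consequently the trailing run of the full word must reach back into $w$, which is possible only when $v_1\cdots v_{j-1}=1^{j-1}$. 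Writing $s$ for the length of the trailing run of $1$'s in $w$, the equation $s+(j-1)=m-1$ then pins $j$ to the single value $j=m-s$. Hence at most one index is lost, giving $\sharp B\ge N^m_0(v)-1$ and the lower bound $N^m_0(wv)\ge N^m_0(w)+N^m_0(v)-1$.

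The delicate point is precisely this uniqueness step: one must argue that a failure can occur only when the trailing $1$'s of $w$ overflow into an all-$1$ prefix of $v$, ruling out any failure when $v_1\cdots v_{j-1}$ already contains a $0$ (in which case the trailing run is confined to $v$ and is unaffected by prepending $w$). Once this is settled the counting is immediate. Finally, part (2) follows by rerunning the whole argument with the roles of $0$ and $1$ interchanged, since bit-complementation is a length-preserving bijection of $\Lambda_m^*$ that exchanges $\cN^m_0$ and $\cN^m_1$.
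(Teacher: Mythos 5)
Your proof is correct and takes essentially the same approach as the paper: the same split of $\cN^m_0(wv)$ into $w$-block and $v$-block positions gives the upper bound, and the lower bound rests on the same observation that at most one index of $\cN^m_0(v)$ --- necessarily lying in the leading all-ones prefix of $v$ --- can fail to lift to $\cN^m_0(wv)$. The only differences are cosmetic: the paper designates the first zero of $v$ as the sole possible exception and verifies every other index by contradiction, whereas you characterize a failing index directly through trailing-run lengths (pinning $j=m-s$), and your complementation argument for (2) just makes explicit what the paper dismisses as ``the same way as (1)''.
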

\begin{proof}
Let $a=|w|$ and $b=|v|$.
\item(1) \textcircled{\footnotesize{$1$}} Prove $N^m_0(wv) \le N^m_0(w) + N^m_0(v)$.
\newline It suffices to prove $\cN^m_0(wv)\subset\cN^m_0(w)\cup(\cN^m_0(v)+a)$, where $\cN_0^m(v)+a:=\{j+a:j\in\cN_0^m(v)\}$. Let $k\in\cN^m_0(wv)$.
\begin{itemize}
\item[i)] If $1\le k\le a$, then $w_k=0$, $w_1\cdots w_{k-1} 1\in\Lambda_m^*$ and we get $k\in\cN^m_0(w)$.
\item[ii)] If $a+1\le k\le a+b$, then $v_{k-a}=0$ and $w_1\cdots w_a v_1\cdots v_{k-a-1} 1\in\Lambda_m^*$. It follows from $v_1\cdots v_{k-a-1}1\in\Lambda_m^*$ that $k-a\in\cN^m_0(v)$ and $k\in\cN^m_0(v)+a$.
\end{itemize}
\textcircled{\footnotesize{$2$}} Prove $N^m_0(w)+N^m_0(v)\le N^m_0(wv)+1$.
\newline When $v=1^b$, we get $N^m_0(v)=0$ and then the conclusion follows from $N^m_0(w)\le N^m_0(wv)$.
\newline When $v\neq 1^b$, there exists a smallest $s\in\{1,\cdots,b\}$ such that $v_1=\cdots=v_{s-1}=1$ and $v_s=0$. In order to get the conclusion, it suffices to show $\cN^m_0(w)\cup(a+\cN^m_0(v))\subset\cN^m_0(wv)\cup\{a+s\}$. Since $\cN^m_0(w)\subset\cN^m_0(wv)$, we only need to prove $(a+\cN^m_0(v))\subset\cN^m_0(wv)\cup\{a+s\}$. Let $k\in\cN^m_0(v)\setminus\{s\}$. It suffices to check $a+k\in\cN^m_0(wv)$. By $v_k=0$, we only need to prove $w_1\cdots w_av_1\cdots v_{k-1}1\in\Lambda^*_m$. (By contradiction) Assume $w_1\cdots w_av_1\cdots v_{k-1}1\notin\Lambda_m^*$. Then $w_1\cdots w_av_1\cdots v_{k-1}1$ contains $0^m$ or $1^m$.
\begin{itemize}
\item[i)] If $w_1\cdots w_av_1\cdots v_{k-1}1$ contains $0^m$, then $w_1\cdots w_av_1\cdots v_{k-1}$ contains $0^m$. This contradicts $wv\in\Lambda_m^*$.
\item[ii)] If $w_1\cdots w_av_1\cdots v_{k-1}1$ contains $1^m$, by $k\ge s+1$, we know that
$$w_1\cdots w_av_1\cdots v_{s-1}0v_{s+1}\cdots v_{k-1}1$$
contains $1^m$. Thus $w_1\cdots w_av_1\cdots v_{s-1}$ contains $1^m$ or $v_{s+1}\cdots v_{k-1}1$ contains $1^m$. But $w_1\cdots w_av_1\cdots v_{s-1}$ contains $1^m$ will contradict $wv\in\Lambda^*_m$, and $v_{s+1}\cdots v_{k-1}1$ contains $1^m$ will imply $v_1\cdots v_{k-1}1$ contains $1^m$ which contradicts $k\in\cN^m_0(v)$.
\end{itemize}
(2) follows from the same way as (1).
\end{proof}

\begin{proposition}\label{lower bound of changable}
Let $m\ge3$ be an integer and $w\in\Lambda^*_m$. Then
\begin{itemize}
\item[\emph{(1)}] $m\cdot|w|_0\le(m-1)N^m_0(w)+|w|$;
\item[\emph{(2)}] $m\cdot|w|_1\le(m-1)N^m_1(w)+|w|$.
\end{itemize}
\end{proposition}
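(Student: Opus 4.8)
The plan is to reformulate the membership condition defining $\cN^m_0(w)$ into a purely local condition on the digits immediately preceding each $0$, and then to reduce the inequality to a disjointness statement about blocks of $1$'s. First I would fix a position $k$ with $w_k=0$ and ask when $w_1\cdots w_{k-1}1\in\Lambda_m^*$. Since $w_1\cdots w_{k-1}$ is a factor of $w\in\Lambda_m^*$, it already avoids $0^m$ and $1^m$; appending a single $1$ cannot create a new occurrence of $0^m$, and can create $1^m$ only by completing such a block at the very right end. Hence $w_1\cdots w_{k-1}1\notin\Lambda_m^*$ precisely when $k\ge m$ and $w_{k-m+1}=\cdots=w_{k-1}=1$. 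I will call a position $k$ with $w_k=0$ and $k\notin\cN^m_0(w)$ a \emph{bad zero}; by the above these are exactly the zeros immediately preceded by $m-1$ consecutive ones.

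Writing $D:=|w|_0-N^m_0(w)$ for the number of bad zeros and using $|w|=|w|_0+|w|_1$, a one-line substitution turns the target inequality (1) into the equivalent statement $(m-1)\,D\le|w|_1$. Indeed, substituting $N^m_0(w)=|w|_0-D$ into $(m-1)N^m_0(w)+|w|$ yields $m|w|_0-(m-1)D+|w|_1$, so (1) holds if and only if $(m-1)D\le|w|_1$. Thus everything reduces to producing, for each bad zero, a dedicated block of $m-1$ ones, these blocks being pairwise disjoint.

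To each bad zero $k$ I attach the index block $B_k:=\{k-m+1,\dots,k-1\}$, which by definition consists of $m-1$ consecutive positions all carrying the digit $1$. The key step is to verify that $B_k\cap B_{k'}=\emptyset$ for distinct bad zeros $k<k'$. Since $B_{k'}$ consists entirely of $1$'s while $w_k=0$, the index $k$ cannot lie in $B_{k'}$; combined with $k<k'$ (so $k\le k'-1$) this forces $k\le k'-m$, whence $B_k$ ends at $k-1\le k'-m-1$, strictly before $B_{k'}$ begins at $k'-m+1$. Summing over the $D$ pairwise disjoint blocks, each contributing $m-1$ ones, gives $|w|_1\ge(m-1)D$, which is exactly the desired inequality, proving (1).

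Part (2) follows by the symmetric argument obtained by exchanging the roles of $0$ and $1$ throughout (here the relevant blocks are runs of $m-1$ zeros preceding the ``bad ones''). I expect the only genuine obstacle to be the disjointness verification in the third paragraph; the reformulation of the $\Lambda_m^*$-membership condition is routine once one observes that appending a single $1$ to a factor of $w$ can produce the forbidden factor $1^m$ only at the right end.
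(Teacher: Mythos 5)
Your proof is correct and follows essentially the same route as the paper: the paper also characterizes the zeros not in $\cN^m_0(w)$ as exactly those preceded by $1^{m-1}$ (its set $\cN^m_{1^{m-1}0}(w)$ is your set of bad zeros) and then uses $(m-1)N^m_{1^{m-1}0}(w)\le|w|_1$ to conclude. The only difference is that you spell out the block-disjointness argument behind that last inequality, which the paper leaves implicit.
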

\begin{proof} (1) Let $n=|w|$. If $n\le m-1$, the conclusion follows immediately from $N^m_0(w)=|w|_0$. In the following, we assume $n\ge m$. Recall
$$\cN^m_0(w)=\Big\{k:1\le k\le n, w_k=0,w_1\cdots w_{k-1}1\in\Lambda^*_m\Big\}\quad\text{and}\quad N^m_0(w)=\sharp\cN^m_0(w).$$
We define
$$\cN^m_{1^{m-1}0}(w):=\Big\{k:m\le k\le n, w_{k-m+1}\cdots w_{k-1}w_k=1^{m-1}0\Big\}\quad\text{and}\quad N^m_{1^{m-1}0}=\sharp\cN^m_{1^{m-1}0}(w).$$
\begin{itemize}
\item[\textcircled{\footnotesize{$1$}}] Prove $\{k:1\le k\le n,w_k=0\}=\cN^m_0(w)\cup\cN^m_{1^{m-1}0}(w)$.
\newline$\boxed{\supset}$ Obvious.
\newline$\boxed{\subset}$ Let $k\in\{1,\cdots,n\}$ such that $w_k=0$. If $k\notin\cN^m_0(w)$, then $k\ge m$ and $w_1\cdots w_{k-1}1\notin\Lambda^*_m$. By $w_1\cdots w_{k-1}\in\Lambda^*_m$, we get $w_{k-m+1}\cdots w_{k-1}=1^{m-1}$. This implies $k\in\cN^m_{1^{m-1}0}(w)$.
\item[\textcircled{\footnotesize{$2$}}] Prove $\cN^m_0(w)\cap\cN^m_{1^{m-1}0}(w)=\emptyset$.
\newline(By contradiction) Assume that there exists $k\in\cN^m_0(w)\cap\cN^m_{1^{m-1}0}(w)$. Then $k\ge m$, $w_{k-m+1}\cdots w_{k-1}=1^{m-1}$ and $w_1\cdots w_{k-1}1\in\Lambda^*_m$. These imply $w_1\cdots w_{k-m}1^m\in\Lambda^*_m$, which contradicts the definition of $\Lambda^*_m$.
\end{itemize}
Combining \textcircled{\footnotesize{$1$}} and \textcircled{\footnotesize{$2$}}, we get $|w|_0=N^m_0(w)+N^m_{1^{m-1}0}(w)$. It follows from $(m-1)N^m_{1^{m-1}0}(w)\le|w|_1=|w|-|w|_0$ that $(m-1)(|w|_0-N^m_0(w))\le|w|-|w|_0$, i.e., $m\cdot|w|_0\le(m-1)N^m_0(w)+|w|$.
\newline(2) follows from the same way as (1).
\end{proof}

\section{Bernoulli-type measures on $\Lambda_m$}

Let $m\ge3$ be an integer, $\cB(\Lambda_m)$ be the Borel sigma-algebra on $\Lambda_m$ (equipped with the usual metric $d_2$) and $p\in(0,1)$. We define the \textit{$(p,1-p)$ Bernoulli-type measure} $\mu_p$ on $(\Lambda_m,\cB(\Lambda_m))$ as follows:
\begin{itemize}
\item[I.] Let
$$\mu_p(\emptyset)=0, \quad \mu_p(\Lambda_m)=1, \quad \mu_p [0] =p, \quad \text{and} \quad \mu_p[1]=1-p.$$
\item[II.] Suppose $\mu_p$ has been defined for all cylinders of order $n \in \N$. For any $w\in\Lambda_m^n$,
\newline if $w0, w1\in\Lambda_m^{n+1}$, we define
$$\mu_p[w0]:=p\mu_p[w] \quad \text{and} \quad \mu_p[w1]:=(1-p)\mu_p[w];$$
if $w0\in\Lambda_m^{n+1}$ but $w1\notin\Lambda_m^{n+1}$, then $[w1] = \emptyset$, $[w0]=[w]$ and naturally we have
$$\mu_p[w0]=\mu_p[w];$$
if $w1\in\Lambda_m^{n+1}$ but $w0\notin\Lambda_m^{n+1}$, then $[w0] = \emptyset$, $[w1]=[w]$ and naturally we have
$$\mu_p[w1]=\mu_p[w].$$
\item[III.] By Carath\'eodory's measure extension theorem, we uniquely extend $\mu_p$ from its definition on the family of cylinders to become a measure on $\cB(\Lambda_m)$.
\end{itemize}

\begin{remark}\label{measureformula} By the definition of $\mu_p$, we have
$$\mu_p[w] = p^{N^m_0(w)} (1-p)^{N^m_1(w)}\quad \text{for all } w \in \Lambda^*_m.$$
\end{remark}

Note that $\mu_p$ is not $\sigma_m$-invariant. In fact, $\mu_p[0^{m-2}1]=p^{m-2}(1-p)$ but
$$\mu_p(\sigma_m^{-1}[0^{m-2}1])=\mu_p[0^{m-1}1]+\mu_p[10^{m-2}1]=p^{m-1}+p^{m-2}(1-p)^2\neq p^{m-2}(1-p)\text{ for all }p\in(0,1).$$

The main result in this section is the following.

\begin{theorem}\label{lambda p}
Let $m\ge3$ be an integer and $p\in(0,1)$. Then there exists a unique $\sigma_m$-invariant ergodic probability measure $\lambda_p$ on $(\Lambda_m,\cB(\Lambda_m))$ equivalent to $\mu_p$, where $\lambda_p$ is defined by
$$\lambda_p(B):=\lim_{n\to\infty}\frac{1}{n}\sum_{k=0}^{n-1}\sigma_m^k\mu_p(B)\quad\text{for }B\in\cB(\Lambda_m).$$
\end{theorem}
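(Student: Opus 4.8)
The plan is to recognize $\mu_p$ as a Markov-type measure driven by a finite ``terminal-run'' state and to control the pushforwards $\sigma_m^k\mu_p$ through this finite structure. Encode each $w\in\Lambda_m^*$ by the symbol and the length $j\in\{1,\dots,m-1\}$ of its maximal terminal run; this gives $2(m-1)$ states, and by the defining rule of $\mu_p$ together with Remark \ref{measureformula}, appending a letter multiplies $\mu_p$ by $p$, by $1-p$, or (when the letter is forced) by $1$, according to a fixed stochastic matrix $P$ on these states. The first ingredient, provable directly from Proposition \ref{parameter}, is a \emph{uniform comparison}: since $N^m_0(uw)$ and $N^m_1(uw)$ differ from $N^m_0(u)+N^m_0(w)$ and $N^m_1(u)+N^m_1(w)$ by at most one, Remark \ref{measureformula} gives $\mu_p[u]\mu_p[w]\le\mu_p[uw]\le C_0\,\mu_p[u]\mu_p[w]$ with $C_0:=1/(p(1-p))$. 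Summing over admissible length-$k$ prefixes $u$ yields, on every cylinder, $\mu_p[w]\,\rho_k(w)\le\mu_p(\sigma_m^{-k}[w])\le C_0\,\mu_p[w]$, where $\rho_k(w):=\sum_{u\in\Lambda_m^k,\,uw\in\Lambda_m^*}\mu_p[u]\ge\mu_p(\{v:v_k\neq w_1\})$ is bounded below by a constant $c_0>0$ independent of $k$ and $w$ (the one-dimensional marginals of $\mu_p$ are uniformly positive). By the Monotone Class Theorem \ref{monotone class theorem} this extends to $\sigma_m^k\mu_p\le C_0\,\mu_p$ on all of $\cB(\Lambda_m)$, so each Cesàro average $\lambda_n:=\frac1n\sum_{k=0}^{n-1}\sigma_m^k\mu_p$ satisfies $\lambda_n\le C_0\,\mu_p$; in particular $\lambda_n\ll\mu_p$.

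The crux is the existence of the limit, which I would treat first on cylinders. Writing $\mu_p(\sigma_m^{-k}[w])=\sum_s x_k(s)\,q_s(w)$, where $x_k$ is the $\mu_p$-distribution at time $k$ of the terminal-run state and $q_s(w)$ is the fixed conditional weight of reading $w$ from state $s$, the multiplicativity of $\mu_p$ gives the recursion $x_{k+1}=x_kP$; since $P$ is stochastic, irreducible and aperiodic for $m\ge3$, elementary Perron--Frobenius theory gives $x_k\to\pi$, the unique stationary distribution, so $\mu_p(\sigma_m^{-k}[w])$ converges and its Cesàro averages converge to the same value $\lambda_p[w]$. For general $B\in\cB(\Lambda_m)$, the uniform bound $\lambda_n\le C_0\,\mu_p$ and the approximation Lemma \ref{approximation} upgrade this to convergence of $\lambda_n(B)$: approximating $B$ by an element $A$ of the algebra of cylinders with $\mu_p(A\triangle B)$ small forces $(\lambda_n(B))_n$ to be Cauchy. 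Thus $\lambda_p(B):=\lim_n\lambda_n(B)$ exists for all $B$, is non-negative and finitely additive, with $\lambda_p(\Lambda_m)=1$; countable additivity is exactly the content of the Vitali--Hahn--Saks Theorem \ref{measures limit} applied to $\lambda_n\ll\mu_p$, so $\lambda_p$ is a probability measure.

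Invariance is then immediate by telescoping: for every $B$,
$$\lambda_p(\sigma_m^{-1}B)=\lim_n\frac1n\sum_{k=1}^{n}\mu_p(\sigma_m^{-k}B)=\lambda_p(B)+\lim_n\frac1n\big(\mu_p(\sigma_m^{-n}B)-\mu_p(B)\big)=\lambda_p(B),$$
the error being $O(1/n)$ since $\mu_p$ is a probability measure. For equivalence, letting $k\to\infty$ in the two-sided cylinder estimate of the first paragraph gives $c_0\,\mu_p[w]\le\lambda_p[w]\le C_0\,\mu_p[w]$ for all $w\in\Lambda_m^*$; extending both inequalities to $\cB(\Lambda_m)$ through the Monotone Class Theorem \ref{monotone class theorem} yields $c_0\,\mu_p\le\lambda_p\le C_0\,\mu_p$, whence $\lambda_p\sim\mu_p$ with densities bounded above and below.

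It remains to prove ergodicity and uniqueness. I would establish that $\lambda_p$ is mixing: for cylinders $[u],[w]$ and $n>|u|$, the product structure of the stationary Markov measure together with $P^{\,n}\to\Pi$ (the rank-one limit with rows $\pi$) gives $\lambda_p(\sigma_m^{-n}[w]\cap[u])\to\lambda_p[u]\,\lambda_p[w]$, and Lemma \ref{approximation} extends mixing to all of $\cB(\Lambda_m)$; in particular $\lambda_p$ is ergodic, since a $\sigma_m$-invariant set $A$ satisfies $\lambda_p(A)=\lambda_p(\sigma_m^{-n}A\cap A)\to\lambda_p(A)^2$. Finally, if $\nu$ is any $\sigma_m$-invariant ergodic probability measure with $\nu\sim\mu_p$, then $\nu\sim\lambda_p$, so $\nu$ and $\lambda_p$ are ergodic invariant measures that are not mutually singular; as distinct ergodic invariant measures are mutually singular, $\nu=\lambda_p$. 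The main obstacle throughout is the existence and identification of the limit on cylinders: once the finite transfer matrix $P$ is shown to be stochastic, irreducible and aperiodic and its convergence $x_{k+1}=x_kP\to\pi$ is in hand (the only genuinely non-formal input, resting on Proposition \ref{parameter} and Remark \ref{measureformula}), the remaining properties follow from the uniform comparison and the measure-theoretic tools already recorded.
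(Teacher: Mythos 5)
Your proposal is correct, but it takes a genuinely different route from the paper at the two decisive steps. Where the paper proves existence of the limit $\lim_{n\to\infty}\frac{1}{n}\sum_{k=0}^{n-1}\mu_p(\sigma_m^{-k}B)$ by soft measure theory --- the Dunford--Miller result (Lemma \ref{aeexist}) giving $\mu_p$-a.e.\ existence of the pointwise Ces\`aro averages, then the Dominated Convergence Theorem and Vitali--Hahn--Saks --- you exploit the fact that $\mu_p$ is literally a (non-stationary) finite-state Markov measure whose state is the terminal run, and Perron--Frobenius gives convergence of $\sigma_m^k\mu_p[w]$ itself, not merely of its Ces\`aro means; this also identifies $\lambda_p$ explicitly as the stationary Markov measure with initial law $\pi$ and transition matrix $P$. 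Likewise, where the paper obtains ergodicity from a $0$--$1$ law for the non-invariant measure $\mu_p$ (Lemma \ref{01}, via the quasi-independence estimate $\alpha\mu_p[w]\mu_p(A)\le\mu_p([w]\cap\sigma_m^{-(|w|+2)}A)$ plus approximation), you obtain the strictly stronger property that $\lambda_p$ is mixing, directly from $P^n\to\Pi$; and your uniqueness argument (distinct ergodic invariant measures are mutually singular) is a standard repackaging of the paper's Birkhoff argument. Your route buys stronger conclusions (genuine convergence, mixing), an explicit formula for $\lambda_p$ on cylinders, and as a byproduct Lemma \ref{lambda_p[0]} would reduce to solving the finite linear system $\pi P=\pi$ instead of the paper's long recursion in Section 6; the paper's route buys generality, since it uses only the two-sided quasi-Bernoulli bounds of Lemma \ref{quasi-Bernoulli} and so transfers to systems with no finite Markov coding (such as the beta-shifts of \cite{LLS19}, whose framework this paper follows). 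Two points you should tighten: the parenthetical claim that the one-dimensional marginals $\mu_p(\{v:v_k\neq w_1\})$ are uniformly bounded below does require an argument (it follows from your own convergence $x_k\to\pi$ with $\pi$ strictly positive together with positivity of each term, or from the paper's device in Lemma \ref{strong quasi-invariant} of flipping the $(k-1)$-st letter); and you should record why $P$ is irreducible and aperiodic --- returns to the state $(1,1)$ in both $2$ and $3$ steps, which needs the state $(1,2)$ to exist --- since this is precisely where the hypothesis $m\ge3$ enters.
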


The proof is based on the following lemmas.

\begin{lemma}\label{quasi-Bernoulli}
Let $m\ge3$ be an integer, $p\in(0,1)$ and $w,v\in\Lambda^*_m$ such that $wv\in\Lambda^*_m$. Then
$$\mu_p[w]\mu_p[v]\le\mu_p[wv]\le p^{-1}(1-p)^{-1}\mu_p[w]\mu_p[v].$$
\end{lemma}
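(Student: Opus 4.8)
The plan is to reduce the entire statement to the explicit closed-form expression for $\mu_p$ on cylinders together with the additive estimates on the digit occurrence parameters already established, so that the lemma becomes a short bookkeeping argument.

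First I would invoke Remark~\ref{measureformula} to rewrite the three quantities in the statement purely in terms of $N^m_0$ and $N^m_1$. Writing, for brevity, $a_0=N^m_0(w)$, $a_1=N^m_1(w)$, $b_0=N^m_0(v)$, $b_1=N^m_1(v)$ and $c_0=N^m_0(wv)$, $c_1=N^m_1(wv)$, this yields
$$\mu_p[w]\mu_p[v]=p^{a_0+b_0}(1-p)^{a_1+b_1}\quad\text{and}\quad\mu_p[wv]=p^{c_0}(1-p)^{c_1}.$$
Next I would apply Proposition~\ref{parameter}, whose two parts provide exactly
$$a_0+b_0-1\le c_0\le a_0+b_0\quad\text{and}\quad a_1+b_1-1\le c_1\le a_1+b_1.$$

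The final step is to convert these additive bounds on the exponents into the desired multiplicative bounds on the measure, keeping careful track of directions since $p,1-p\in(0,1)$ make $t\mapsto p^t$ and $t\mapsto(1-p)^t$ decreasing. For the lower bound, $c_0\le a_0+b_0$ and $c_1\le a_1+b_1$ give $p^{c_0}\ge p^{a_0+b_0}$ and $(1-p)^{c_1}\ge(1-p)^{a_1+b_1}$, whence $\mu_p[wv]\ge\mu_p[w]\mu_p[v]$. For the upper bound, $c_0\ge a_0+b_0-1$ and $c_1\ge a_1+b_1-1$ give $p^{c_0}\le p^{-1}p^{a_0+b_0}$ and $(1-p)^{c_1}\le(1-p)^{-1}(1-p)^{a_1+b_1}$; multiplying these two inequalities then produces $\mu_p[wv]\le p^{-1}(1-p)^{-1}\mu_p[w]\mu_p[v]$.

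The main obstacle is in fact already behind us: all the combinatorial content — the near-additivity of the parameters under concatenation — lives in Proposition~\ref{parameter}, so nothing substantial remains. The only point requiring mild care is the direction of each inequality, which is dictated entirely by $p,1-p\in(0,1)$; one must not inadvertently reverse an inequality when passing from exponents to powers.
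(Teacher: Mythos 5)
Your proof is correct and is exactly the paper's argument: the paper's own proof of this lemma is the one-line statement that it ``follows from Remark~\ref{measureformula} and Proposition~\ref{parameter}'', and your write-up simply makes explicit the bookkeeping (expressing the cylinder measures via $N^m_0$, $N^m_1$ and converting the additive bounds of Proposition~\ref{parameter} into multiplicative ones, with the inequality directions dictated by $p,1-p\in(0,1)$) that the paper leaves implicit.
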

\begin{proof}
It follows from Remark \ref{measureformula} and Proposition \ref{parameter}.
\end{proof}

\begin{lemma}\label{strong quasi-invariant}
Let $m\ge3$ be an integer and $p\in(0,1)$. Then there exists a constant $c>1$ such that
$$c^{-1}\mu_p(B)\le\sigma_m^k\mu_p(B)\le c\mu_p(B)$$
for all $k\in\N$ and $B\in\cB(\Lambda_m)$.
\end{lemma}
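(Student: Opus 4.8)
The plan is to reduce the two-sided comparison to cylinders and then extend to all Borel sets by the Monotone Class Theorem (Theorem \ref{monotone class theorem}). First I would fix $k\in\N$ and $w\in\Lambda_m^*$ with $|w|=n$, and decompose the preimage into a disjoint union of cylinders,
$$\sigma_m^{-k}[w]=\bigsqcup_{u\in\Lambda_m^k,\ uw\in\Lambda_m^*}[uw],$$
since $v\in\sigma_m^{-k}[w]$ holds exactly when its prefix $u:=v_1\cdots v_k$ lies in $\Lambda_m^k$ and $v_{k+1}\cdots v_{k+n}=w$. Summing over this partition and applying the quasi-Bernoulli estimate of Lemma \ref{quasi-Bernoulli} term by term gives
$$\mu_p[w]\sum_{u}\mu_p[u]\le\sigma_m^k\mu_p([w])=\sum_{u}\mu_p[uw]\le p^{-1}(1-p)^{-1}\mu_p[w]\sum_{u}\mu_p[u],$$
where every sum ranges over $u\in\Lambda_m^k$ with $uw\in\Lambda_m^*$.

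For the upper bound I would simply discard the admissibility constraint on $u$, using that the cylinders $\{[u]:u\in\Lambda_m^k\}$ partition $\Lambda_m$, so $\sum_{u}\mu_p[u]\le\sum_{u\in\Lambda_m^k}\mu_p[u]=\mu_p(\Lambda_m)=1$; this yields $\sigma_m^k\mu_p([w])\le p^{-1}(1-p)^{-1}\mu_p[w]$. The lower bound is the crux. Here I would restrict the sum to those $u$ whose last digit is $\overline{w_1}$, the digit opposite to the first digit of $w$: for any such $u\in\Lambda_m^*$ one automatically has $uw\in\Lambda_m^*$, because a forbidden block $0^m$ or $1^m$ cannot straddle the junction between $u$ and $w$ (the two digits there differ) while neither $u$ nor $w$ contains one. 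Hence $\sum_{u}\mu_p[u]\ge\mu_p(\{v\in\Lambda_m:v_k=\overline{w_1}\})$, and it remains to bound this quantity below by a constant $\delta>0$ independent of $k$ and $w$.

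This uniform lower bound is the step I expect to be the main obstacle. I would obtain it from the run-length structure of $\mu_p$: by the defining rule, appending $0$ to an admissible word multiplies its $\mu_p$-mass by $p$ (a free choice) or by $1$ (a forced transition, when the word ends in $1^{m-1}$), hence always by at least $p$. Therefore $\mu_p(\{v_k=0\})\ge p\cdot\mu_p(\{v:v_1\cdots v_{k-1}\text{ does not end in }0^{m-1}\})$. The excluded event, that the $m-1$ digits preceding position $k$ are all $0$, has $\mu_p$-measure at most $p^{m-2}$ (its last $m-2$ zeros are free choices, each of weight $p$), so $\mu_p(\{v_k=0\})\ge p(1-p^{m-2})>0$; the symmetric argument gives $\mu_p(\{v_k=1\})\ge(1-p)(1-(1-p)^{m-2})>0$. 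Taking $\delta$ to be the smaller of these two constants furnishes the lower bound on cylinders.

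Finally, with $c:=\max\{p^{-1}(1-p)^{-1},\,\delta^{-1}\}>1$, the two-sided estimate $c^{-1}\mu_p[w]\le\sigma_m^k\mu_p([w])\le c\,\mu_p[w]$ holds on every cylinder, and by finite additivity it passes to the algebra of finite disjoint unions of cylinders (Lemma \ref{approximation}(1)). Since all the measures involved are finite, the family $\{B\in\cB(\Lambda_m):c^{-1}\mu_p(B)\le\sigma_m^k\mu_p(B)\le c\,\mu_p(B)\}$ is stable under increasing and decreasing countable limits, hence is a monotone class containing that algebra; Theorem \ref{monotone class theorem} then propagates the inequalities to all of $\cB(\Lambda_m)$, with $c$ independent of $k$, as required.
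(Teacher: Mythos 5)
Your proof is correct, and its overall architecture coincides with the paper's: decompose $\sigma_m^{-k}[w]$ into the disjoint cylinders $[uw]$, apply Lemma \ref{quasi-Bernoulli} termwise, get the upper bound by dropping the admissibility constraint on $u$, get the lower bound by restricting to prefixes $u$ whose last digit is $\overline{w_1}$ (so that $uw\in\Lambda_m^*$ is automatic), and extend from cylinders to all of $\cB(\Lambda_m)$ via the algebra of finite disjoint unions and the Monotone Class Theorem (Theorem \ref{monotone class theorem}). The one point of genuine divergence is how you make the restricted sum $\sum_{u\in\Lambda_m^k,\,u_k=\overline{w_1}}\mu_p[u]=\mu_p\sigma_m^{-(k-1)}[\overline{w_1}]$ uniformly positive. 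The paper applies the opposite-digit trick a second time, replacing the last letter $u_k$ by $\overline{u}_{k-1}$ so that admissibility reduces to that of $u_1\cdots u_{k-1}$, and then invokes Lemma \ref{quasi-Bernoulli} once more, arriving at the constants $p^2(1-p)$ and $p(1-p)^2$, hence $c=p^{-2}(1-p)^{-2}$. You instead read the bound off the forced/free transition structure of $\mu_p$: appending a digit multiplies the mass by its weight or by $1$, and the event that $v_1\cdots v_{k-1}$ ends in $0^{m-1}$ has mass at most $p^{m-2}$ because its last $m-2$ zeros are free choices of weight $p$; this yields $p(1-p^{m-2})$ and $(1-p)(1-(1-p)^{m-2})$. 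Both sub-arguments are elementary and valid: the paper's buys a constant that involves only $p$ (not $m$) and stays entirely within the quasi-Bernoulli lemma, while yours exploits the explicit definition of $\mu_p$ and avoids the second combinatorial substitution; the small edge cases in your run-length bound (small $k$, where the excluded event is empty or equals $[0^{m-1}]$) are trivially consistent with the stated estimates, so the argument is complete.
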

\begin{proof} Let $c=p^{-2}(1-p)^{-2}>1$.
\newline(1) Prove $c^{-1}\mu_p[w]\le\sigma_m^k\mu_p[w]\le c\mu_p[w]$ for any $k\in\N$ and $w\in\Lambda_m^*$.
\newline Fix $w\in\Lambda_m^*$ and $k\in\N$. Note that
$$\sigma_m^{-k}[w]=\bigcup_{u_1\cdots u_kw\in\Lambda_m^*}[u_1\cdots u_kw]$$
is a disjoint union.
\newline\textcircled{\footnotesize{$1$}} Estimate the upper bound of $\sigma_m^k\mu_p[w]$:
\begin{eqnarray*}
\mu_p\sigma_m^{-k}[w]&=&\sum_{u_1\cdots u_kw\in\Lambda_m^*}\mu_p[u_1\cdots u_kw]\\
&\overset{(\star)}{\le}&\sum_{u_1\cdots u_kw\in\Lambda_m^*}p^{-1}(1-p)^{-1}\mu_p[u_1\cdots u_k]\mu_p[w]\\
&\le& p^{-1}(1-p)^{-1}\sum_{u_1\cdots u_k\in\Lambda_m^*}\mu_p[u_1\cdots u_k]\mu_p[w]\\
&=&p^{-1}(1-p)^{-1}\mu_p[w]\\
&\le&c\mu_p[w]
\end{eqnarray*}
where $(\star)$ follows from Lemma \ref{quasi-Bernoulli}.
\newline\textcircled{\footnotesize{$2$}} Estimate the lower bound of $\sigma_m^k\mu_p[w]$:
\begin{itemize}
\item[i)] Prove $\mu_p\sigma_m^{-k}[0]\ge p^2(1-p)$ and $\mu_p\sigma_m^{-k}[1]\ge p(1-p)^2$. In fact, when $k=1$, the conclusion is obvious. When $k\ge2$, we have
\begin{eqnarray*}
\mu_p\sigma_m^{-k}[0]&=&\sum_{u_1\cdots u_k0\in\Lambda_m^*}\mu_p[u_1\cdots u_k0]\\
&\ge&\sum_{u_1\cdots u_{k-1}\overline{u}_{k-1}0\in\Lambda_m^*}\mu_p[u_1\cdots u_{k-1}\overline{u}_{k-1}0]\\
&\overset{(\star)}{=}&\sum_{u_1\cdots u_{k-1}\in\Lambda^*_m}\mu_p[u_1\cdots u_{k-1}\overline{u}_{k-1}0]\\
&\overset{(\star\star)}{\ge}&\mu_p[0]\sum_{u_1\cdots u_{k-1}\in\Lambda^*_m}\mu_p[u_1\cdots u_{k-1}]\mu_p[\overline{u}_{k-1}]\\
&\ge& p\sum_{u_1\cdots u_{k-1}\in\Lambda^*_m}\mu_p[u_1\cdots u_{k-1}]\cdot p(1-p)\\
&=&p^2(1-p),
\end{eqnarray*}
where $(\star)$ follows from
$$u_1\cdots u_{k-1}\overline{u}_{k-1}0\in\Lambda_m^*\Leftrightarrow u_1\cdots u_{k-1}\in\Lambda_m^*$$
and $(\star\star)$ follows from Lemma \ref{quasi-Bernoulli}. In the same way, we can get $\mu_p\sigma_m^{-k}[1]\ge p(1-p)^2$.
\item[ii)] Prove $\mu_p\sigma_m^{-k}[w]\ge c^{-1}\mu_p[w]$. In fact, when $w_1=0$, we have
\begin{eqnarray*}
\mu_p\sigma_m^{-k}[w]&=&\sum_{u_1\cdots u_kw\in\Lambda_m^*}\mu_p[u_1\cdots u_kw]\\
&\ge&\sum_{u_1\cdots u_{k-1}1w\in\Lambda_m^*}\mu_p[u_1\cdots u_{k-1}1w]\\
&\overset{(\star)}{=}&\sum_{u_1\cdots u_{k-1}1\in\Lambda_m^*}\mu_p[u_1\cdots u_{k-1}1w]\\
&\overset{(\star\star)}{\ge}&\sum_{u_1\cdots u_{k-1}1\in\Lambda_m^*}\mu_p[u_1\cdots u_{k-1}1]\mu_p[w]\\
&=&\mu_p\sigma_m^{-(k-1)}[1]\mu_p[w]\\
&\overset{(\star\star\star)}{\ge}&p(1-p)^2\mu_p[w].
\end{eqnarray*}
where $(\star)$ follows from $w_1=0$ and $w\in\Lambda^*_m$, $(\star\star)$ follows from Lemma \ref{quasi-Bernoulli} and $(\star\star\star)$ follows from i). When $w_1=1$, in the same way, we can get $\mu_p\sigma_m^{-k}[w]\ge p^2(1-p)\mu_p[w]$.
\end{itemize}
(2) Prove $c^{-1}\mu_p(B)\le\sigma_m^k\mu_p(B)\le c\mu_p(B)$ for all $k\in\N$ and $B\in\cB(\Lambda_m)$. Let $$\cC:=\Big\{[w]:w\in\Lambda_m^*\Big\}\cup\Big\{\emptyset\Big\},$$
$$\cC_{\Sigma f}:=\Big\{\bigcup_{i=1}^n C_i:C_1,\cdots,C_n\in\cC \text{ are disjoint, } n\in\N\Big\}$$
and
$$\cG:=\Big\{B\in\cB(\Lambda_m):c^{-1}\mu_p(B)\le\sigma_m^k\mu_p(B)\le c\mu_p(B) \text{ for all }k\in\N\Big\}.$$
Then $\cC$ is a semi-algebra on $\Lambda_m$, $\cC_{\Sigma f}$ is the algebra generated by $\cC$ (by Lemma \ref{approximation} (1)) and $\cG$ is a monotone class. Since in (1) we have already proved $\cC\subset\cG$, it follows that $\cC_{\Sigma f}\subset\cG\subset\cB(\Lambda_m)$. Noting that $\cB(\Lambda_m)$ is the smallest sigma-algebra containing $\cC_{\Sigma f}$, it follows from the Monotone Class Theorem (Theorem \ref{monotone class theorem}) that $\cG=\cB(\Lambda_m)$.
\end{proof}

\begin{lemma}[\cite{DM46}]\label{aeexist}
Let $(X,\cB,\mu)$ be a probability space and $T$ be a measurable transformation on $X$ satisfying $\mu(T^{-1}B)=0$ whenever $B\in\cB$ with $\mu(B)=0$. If there exists a constant $M>0$ such that for any $B\in\cB$ and any $n\in\N$,
$$\frac{1}{n}\sum_{k=0}^{n-1}\mu(T^{-k}B)\le M\mu(B),$$
then for any real integrable function $f$ on $X$, the limit
$$\lim_{n\to\infty}\frac{1}{n}\sum_{k=0}^{n-1}f(T^kx)$$
exists for $\mu$-almost every $x\in X$.
\end{lemma}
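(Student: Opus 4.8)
The plan is to recast the hypothesis as a uniform $L^1$-bound on the Cesàro averages of the Koopman operator, and then run the classical two-part scheme for pointwise ergodic theorems: establish a.e.\ convergence on a dense class, and upgrade it to all of $L^1$ by means of a maximal inequality (the Banach principle). First I would fix notation: let $U$ be the Koopman operator $Uf=f\circ T$, which is well defined on $\mu$-classes precisely because $T$ is nonsingular. Writing $A_nf:=\frac1n\sum_{k=0}^{n-1}U^kf$, the hypothesis applied to an indicator $\mathbf{1}_B$ reads $\int_X A_n\mathbf{1}_B\,d\mu=\frac1n\sum_{k=0}^{n-1}\mu(T^{-k}B)\le M\mu(B)$; by positivity of $U$, linearity, and monotone convergence this extends to $\int_X A_nf\,d\mu\le M\int_X f\,d\mu$ for every $f\ge0$ in $L^1$, i.e.\ $\|A_n\|_{L^1\to L^1}\le M$. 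Since moreover $\|U^kf\|_\infty\le\|f\|_\infty$, the operator $U$ is an $L^\infty$-contraction, so Riesz--Thorin interpolation gives $\|A_n\|_{L^2\to L^2}\le M^{1/2}$ uniformly in $n$. Finally, writing $\rho_k$ for the densities $d(\mu\circ T^{-k})/d\mu$, the hypothesis says $\frac1N\sum_{k=0}^{N-1}\rho_k\le M$ a.e., whence $\rho_n\le M(n+1)$ and $\|U^nf\|_2^2=\int|f|^2\rho_n\le M(n+1)\|f\|_2^2$, so that $\frac1n\|U^nf\|_2\to0$ for each $f\in L^2$.

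Next I would produce a dense class on which the averages converge a.e. The uniform bound $\sup_n\|A_n\|_{L^2}<\infty$ together with $\frac1n\|U^nf\|_2\to0$ places us in the setting of the mean ergodic theorem on the reflexive space $L^2$, which yields the splitting $L^2=\ker(I-U)\oplus\overline{(I-U)L^2}$. On an invariant function ($Uf=f$) one has $A_nf=f$, which trivially converges; on a coboundary $f=h-Uh$ with $h\in L^\infty$ one has $A_nf=\frac1n(h-U^nh)$, which tends to $0$ a.e.\ because $\|U^nh\|_\infty\le\|h\|_\infty$. Since $(I-U)L^\infty$ is $L^2$-dense in $\overline{(I-U)L^2}$ and $L^2$ is dense in $L^1$ (the measure being finite, with $L^2$-convergence implying $L^1$-convergence), the linear span $D$ of invariant functions and $L^\infty$-coboundaries is $L^1$-dense, and $A_n$ converges a.e.\ on $D$.

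The step I expect to be the main obstacle is the maximal inequality, namely a weak-type $(1,1)$ estimate $\mu(\{\sup_n A_n|f|>\lambda\})\le C\lambda^{-1}\|f\|_1$. The naive Hopf--Garsia argument breaks down here because $U$ is \emph{not} an $L^1$-contraction (only Cesàro bounded), so the step $\int UM_N^+\le\int M_N^+$ is unavailable. The remedy I would use is to pass to the transfer (Perron--Frobenius) operator $P$, the adjoint of $U$ for the pairing $\langle f,g\rangle=\int fg\,d\mu$: it is positive and, since $\int P|f|\,d\mu=\int|f|\,d\mu$, it is an honest $L^1$-contraction, so Hopf's maximal ergodic theorem applies to $P$. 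Using the duality $\langle A_nf,\phi\rangle=\langle f,\frac1n\sum_{k=0}^{n-1}P^k\phi\rangle$, the maximal inequality for the $P$-averages transfers to the $U$-averages; this is precisely the Dunford--Schwartz maximal inequality, and the uniform Cesàro bound $M$ is exactly what keeps all the averages genuinely in $L^1$ so that the transference is legitimate.

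Finally I would invoke the Banach principle: the maximal inequality implies that the set $\{f\in L^1:\ A_nf\ \text{converges a.e.}\}$ is closed in $L^1$. Since it contains the dense set $D$, it must be all of $L^1$. Splitting an arbitrary real $f\in L^1$ as $f^+-f^-$ if one prefers to argue with nonnegative functions, this shows that $\frac1n\sum_{k=0}^{n-1}f(T^kx)=A_nf(x)$ converges for $\mu$-almost every $x$, which is the assertion.
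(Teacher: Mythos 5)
You should first know what you are being compared against: the paper offers no proof of this lemma at all --- it is imported verbatim from Dunford and Miller \cite{DM46} --- so your proposal stands or falls on its own merits. Most of your scaffolding is sound. The Cesàro hypothesis does give $\sup_n\|A_n\|_{L^1\to L^1}\le M$ (positivity plus monotone convergence, as you say); interpolation with the $L^\infty$-contractivity of the Koopman operator $U$ gives uniform $L^2$-bounds; the pointwise bound $\frac1N\sum_{k=0}^{N-1}\rho_k\le M$ a.e.\ does yield $\rho_n\le M(n+1)$ and hence $\frac1n\|U^nf\|_2\to0$; and the mean ergodic splitting $L^2=\ker(I-U)\oplus\overline{(I-U)L^2}$ together with bounded coboundaries (where $A_n(h-Uh)=\frac1n(h-U^nh)\to0$ uniformly) produces a legitimate $L^1$-dense class on which the averages converge a.e. The Banach-principle endgame is also standard, \emph{granted} a maximal inequality.

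The genuine gap is the maximal inequality itself, exactly where you predicted trouble, and your proposed remedy does not work as stated. Maximal operators are sublinear, not linear: the duality $\langle A_nf,\phi\rangle=\langle f,\frac1n\sum_{k=0}^{n-1}P^k\phi\rangle$ controls each fixed $n$, but there is no transference principle converting Hopf's weak-type bound for $\sup_n\frac1n\sum_{k<n}P^k\phi$ into one for $\sup_nA_nf$; a supremum cannot be dualized this way. The appeal to ``precisely the Dunford--Schwartz maximal inequality'' is moreover a misattribution: that inequality requires a \emph{single} operator contracting both $L^1$ and $L^\infty$, whereas here the two contraction properties are split between two different operators ($P$ contracts $L^1$ but not $L^\infty$; $U$ contracts $L^\infty$ but is only Cesàro-bounded on $L^1$), so neither is a Dunford--Schwartz operator and the theorem applies to neither. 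The obstruction is not cosmetic. The natural substitute --- the Banach-limit invariant measure $\nu(B):=\mathrm{LIM}_n\,\frac1n\sum_{k=0}^{n-1}\mu(T^{-k}B)$, which satisfies $\nu\le M\mu$ and is countably additive and $T$-invariant --- need not be equivalent to $\mu$: take $X=\{0,1\}$ with $\mu$ uniform and $T(0)=T(1)=1$; the hypotheses hold with $M=2$, yet $\nu$ is the point mass at $1$. So Hopf or Birkhoff relative to $\nu$ says nothing about the $\mu$-positive set where $d\nu/d\mu$ vanishes, and any correct proof must contain a mechanism for this transient part of the space. Your sketch has none; the content of \cite{DM46}, which the paper invokes as a black box, is in effect exactly the dominated estimate you have left unproven.
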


\begin{lemma}\label{01}
Let $m\ge3$ be an integer and $p\in(0,1)$. For any $B\in\cB(\Lambda_m)$ satisfying $\sigma_m^{-1}B=B$, we have $\mu_p(B)=0$ or $1$.
\end{lemma}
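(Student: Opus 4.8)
The plan is to show that $\mu_p$ behaves like an ergodic measure on invariant sets, even though it is not $\sigma_m$-invariant, by establishing a one-sided asymptotic-independence estimate with a \emph{uniform} multiplicative constant and then substituting $A=B^c$ into it. The heart of the matter is the following mixing lower bound: there is a constant $c_0\in(0,1)$ such that for all $v,w'\in\Lambda_m^*$, setting $n=|v|+2$,
\[
\mu_p\big([v]\cap\sigma_m^{-n}[w']\big)\ge c_0\,\mu_p[v]\,\mu_p[w'].
\]
To prove it I would first note that, since $n=|v|+2$,
\[
[v]\cap\sigma_m^{-n}[w']=\bigsqcup_{\substack{s\in\{0,1\}^2\\ vsw'\in\Lambda_m^*}}[vsw'],
\]
so it suffices to exhibit a single admissible \emph{connector} $s$ of length $2$. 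Writing $\beta$ for the last letter of $v$ and $\gamma$ for the first letter of $w'$, the word $s=\overline\beta\,\overline\gamma$ breaks the trailing run of $v$ and the leading run of $w'$, and since $m\ge3$ no run of length $m$ is created at either junction or inside $s$; hence $vsw'\in\Lambda_m^*$. Two applications of Lemma \ref{quasi-Bernoulli} then give $\mu_p[vsw']\ge\mu_p[vs]\,\mu_p[w']\ge\mu_p[v]\,\mu_p[s]\,\mu_p[w']$, and $\mu_p[s]\ge\big(\min\{p,1-p\}\big)^{2}=:c_0$, which proves the claim.

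Next I would upgrade this cylinder estimate to arbitrary Borel sets by a monotone-class argument in the spirit of the proof of Lemma \ref{strong quasi-invariant}. Fixing a cylinder $[v]$ and $n=|v|+2$, both $B\mapsto\mu_p([v]\cap\sigma_m^{-n}B)$ and $B\mapsto c_0\,\mu_p[v]\,\mu_p(B)$ are finite measures on $\cB(\Lambda_m)$, and the mixing lower bound says that the first dominates the second on the semi-algebra $\cC=\{[w]:w\in\Lambda_m^*\}\cup\{\emptyset\}$. Since the collection $\{B:\mu_p([v]\cap\sigma_m^{-n}B)\ge c_0\mu_p[v]\mu_p(B)\}$ is closed under finite disjoint unions and is a monotone class (by continuity of finite measures from below and from above), Lemma \ref{approximation}(1) and the Monotone Class Theorem (Theorem \ref{monotone class theorem}) extend the inequality to every $B\in\cB(\Lambda_m)$.

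Now I would invoke invariance. If $\sigma_m^{-1}B=B$ then $\sigma_m^{-n}B=B$ for every $n$, so the previous step yields $\mu_p([v]\cap B)\ge c_0\,\mu_p[v]\,\mu_p(B)$ for every cylinder $[v]$. Fixing this invariant $B$, the two set functions $A\mapsto\mu_p(A\cap B)$ and $A\mapsto c_0\,\mu_p(A)\,\mu_p(B)$ are again finite measures satisfying the inequality on $\cC$, so the same monotone-class argument gives
\[
\mu_p(A\cap B)\ge c_0\,\mu_p(A)\,\mu_p(B)\qquad\text{for all }A\in\cB(\Lambda_m).
\]
Taking $A=B^c$ forces $0=\mu_p(B^c\cap B)\ge c_0\,\mu_p(B^c)\,\mu_p(B)$, whence $\mu_p(B)\,\mu_p(B^c)=0$ and therefore $\mu_p(B)\in\{0,1\}$.

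The only genuinely delicate point is the uniformity of $c_0$ in the mixing lower bound: it is essential that a connector of bounded length (here length $2$, exploiting $m\ge3$) can always be inserted between two arbitrary admissible words to repair the forbidden runs at the junctions, and that its $\mu_p$-measure is bounded below independently of $v$ and $w'$. Everything else is the routine extension of an inequality between two finite measures from the generating semi-algebra of cylinders to the whole Borel $\sigma$-algebra, together with the trivial observation that invariance lets the shift exponent $n$ be taken as large as needed.
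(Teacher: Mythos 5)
Your proof is correct, and its engine is identical to the paper's: the uniform mixing bound on cylinders is obtained exactly as in the paper's proof of Lemma \ref{01} (there the connector word is written $w\overline{w}_n\overline{v}_1v$, which is precisely your $v\,\overline{\beta}\,\overline{\gamma}\,w'$), using Lemma \ref{quasi-Bernoulli} twice and the fact that a two-letter bridge has $\mu_p$-measure bounded below by a constant depending only on $p$; likewise, extending this bound from cylinders to all Borel sets in the pulled-back slot (with the cylinder $[v]$ fixed) via Lemma \ref{approximation}(1) and Theorem \ref{monotone class theorem} is the paper's step (1). Where you genuinely diverge is the endgame. The paper never extends the inequality in the other variable: it invokes Lemma \ref{approximation}(2) to approximate $B^c$ by a finite disjoint union of cylinders $E_\epsilon$ with $\mu_p(B^c\triangle E_\epsilon)<\epsilon$, sums the cylinder inequality over the pieces of $E_\epsilon$ to get $\alpha\mu_p(B)\mu_p(E_\epsilon)\le\mu_p(B\cap E_\epsilon)$, and then runs an explicit bookkeeping estimate with the four quantities $\mu_p(B\cap E_\epsilon)$, $\mu_p(B\setminus E_\epsilon)$, $\mu_p(E_\epsilon\setminus B)$, $\mu_p((B\cup E_\epsilon)^c)$ to conclude $\mu_p(B)\mu_p(B^c)\le(\alpha^{-1}+1)\epsilon$ for every $\epsilon>0$. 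You instead run the monotone-class machine a second time, in the variable $A$ with the invariant set $B$ frozen, to get $\mu_p(A\cap B)\ge c_0\,\mu_p(A)\mu_p(B)$ for all Borel $A$, after which the choice $A=B^c$ finishes in one line. Both endgames are sound and of comparable length: yours is cleaner in that it avoids the $\epsilon$-approximation and the four-term estimate entirely, while the paper's avoids a second (admittedly routine) monotone-class verification. The delicate point you flag---uniformity of $c_0$, guaranteed by the bounded-length connector that exists because $m\ge3$---is indeed the crux, and you handle it correctly.
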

\begin{proof} Let $\alpha=p^2(1-p)^2>0$.
\item(1) Let $w\in\Lambda_m^*$ and $n=|w|$. For any $A\in\cB(\Lambda_m)$, we prove $\alpha\mu_p[w]\mu_p(A)\le\mu_p([w]\cap\sigma_m^{-(n+2)}A)$.
    \begin{itemize}
    \item[\textcircled{\footnotesize{$1$}}] For any $v\in\Lambda^*_m$, prove $\alpha\mu_p[w]\mu_p[v]\le\mu_p([w]\cap\sigma_m^{-(n+2)}[v])$.
        \newline In fact, it follows from $w\overline{w}_n\overline{v}_1v\in\Lambda_m^*$ and $[w]\cap\sigma_m^{-(n+2)}[v]\supset[w\overline{w}_n\overline{v}_1v]$ that
        $$\mu_p([w]\cap\sigma_m^{-(n+2)}[v])\ge\mu_p[w\overline{w}_n\overline{v}_1v]\overset{(\star)}{\ge}\mu_p[w]\mu_p[\overline{w}_n]\mu_p[\overline{v}_1]\mu_p[v]\ge(p(1-p))^2\mu_p[w]\mu_p[v]$$
        where $(\star)$ follows from Lemma \ref{quasi-Bernoulli}.
    \item[\textcircled{\footnotesize{$2$}}] Let
    $$\cC:=\Big\{[v]:v\in\Lambda_m^*\Big\}\cup\Big\{\emptyset\Big\}$$
    and
    $$\cG_w:=\Big\{A\in\cB(\Lambda_m):\alpha\mu_p[w]\mu_p(A)\le\mu_p([w]\cap\sigma_m^{-(n+2)}A)\Big\}.$$
    Then $\cG_w$ is a monotone class. Since in \textcircled{\footnotesize{$1$}} we have already proved $\cC\subset\cG_w$, in the same way as at the end of the proof of Lemma \ref{strong quasi-invariant}, we get $\cG_w=\cB(\Lambda_m)$.
    \end{itemize}
\item(2) We use $B^c$ to denote the complement of $B$ in $\Lambda_m$. For any $\epsilon>0$, by Lemma \ref{approximation}, there exist finitely many disjoint cylinders $\big\{[w^{(i)}]\big\}\subset\cC$ such that $\mu_p(B^c\Delta E_\epsilon)<\epsilon$ where $E_\epsilon=\bigcup_i[w^{(i)}]$.
\item(3) Let $B\in\cB(\Lambda_m)$ with $\sigma_m^{-1}B=B$. For any $w\in\Lambda_m^*$, by $B=\sigma_m^{-(|w|+2)}B$ and (1) we get
$$\alpha\mu_p(B)\mu_p[w]\le\mu(\sigma_m^{-(|w|+2)}B\cap[w])=\mu_p(B\cap[w]).$$
Thus
$$\alpha\mu_p(B)\mu_p(E_\epsilon)=\sum_i\alpha\mu_p(B)\mu_p[w^{(i)}]\le\sum_i\mu_p(B\cap[w^{(i)}])=\mu_p(B\cap\bigcup_i[w^{(i)}])=\mu_p(B\cap E_\epsilon).$$
Let $a=\mu_p((B\cup E_\epsilon)^c)$, $b=\mu_p(B\cap E_\epsilon)$, $c=\mu_p(B\setminus E_\epsilon)$ and $d=\mu_p(E_\epsilon\setminus B)$. Then we already have
$$\alpha(b+c)(b+d)\le b,\quad a+b<\epsilon\text{  (by }\mu_p(B^c\Delta E_\epsilon)<\epsilon\text{)}\quad\text{and}\quad a+b+c+d=1.$$
It follows from
$$\alpha(b+c)(a+d-\epsilon)\le\alpha(b+c)(b+d)\le b<\epsilon$$
that
$$(b+c)(a+d)<(\frac{1}{\alpha}+b+c)\epsilon\le(\frac{1}{\alpha}+1)\epsilon.$$
This implies $\mu_p(B)\mu_p(B^c)\le(\frac{1}{\alpha}+1)\epsilon$ for any $\epsilon>0$. Therefore $\mu_p(B)(1-\mu_p(B))=0$ and then $\mu_p(B)=0$ or $1$.
\end{proof}

\begin{proof}[Proof of Theorem \ref{lambda p}] (1) For any $n\in\N$ and $B\in\cB(\Lambda_m)$, define
$$\lambda_p^n(B):=\frac{1}{n}\sum_{k=0}^{n-1}\mu_p(\sigma_m^{-k}B).$$
Then $\lambda_p^n$ is a probability measure on $(\Lambda_m,\cB(\Lambda_m))$. By Lemma \ref{strong quasi-invariant}, there exists $c>0$ such that
\begin{align}\label{equi}c^{-1}\mu_p(B)\le \lambda_p^n(B)\le c\mu_p(B)\quad\text{for any }B\in\cB(\Lambda_m)\text{ and  }n\in\N.\end{align}
\item(2) For any $B\in\cB(\Lambda_m)$, prove that $\lim_{n\to\infty}\lambda_p^n(B)$ exists.
\newline Let $\mathbbm{1}_B:\Lambda_m\to\R$ be defined by
$$\mathbbm{1}_B(w):=\left\{\begin{array}{ll}
1 & \mbox{if } w\in B\\
0 & \mbox{if } w\notin B
\end{array}\right.$$
for any $w\in\Lambda_m$. Then
$$\begin{aligned}
\lim_{n\to\infty}\lambda_p^n(B)&=\lim_{n\to\infty}\frac{1}{n}\sum_{k=0}^{n-1}\int\mathbbm{1}_{\sigma_m^{-k}B}\text{ }d\mu_p\\
&=\lim_{n\to\infty}\int\frac{1}{n}\sum_{k=0}^{n-1}\mathbbm{1}_B(\sigma_m^kw)\text{ }d\mu_p(w)\\
&=\int\lim_{n\to\infty}\frac{1}{n}\sum_{k=0}^{n-1}\mathbbm{1}_B(\sigma_m^kw)\text{ }d\mu_p(w)
\end{aligned}$$
where the last equality follows from the Dominated Convergence Theorem (Theorem \ref{convergence}), in which the $\mu_p$-a.e. (almost every) existence of $\lim_{n\to\infty}\frac{1}{n}\sum_{k=0}^{n-1}\mathbbm{1}_B(\sigma_m^kw)$ follows from Lemma \ref{aeexist}, Lemma \ref{strong quasi-invariant} and (\ref{equi}).
\item(3) For any $B\in\cB(\Lambda_m)$, define
$$\lambda_p(B):=\lim_{n\to\infty}\lambda_p^n(B).$$
By the Vitali-Hahn-Saks Theorem (Theorem \ref{measures limit}), $\lambda_p$ is a probability measure on $(\Lambda_m,\cB(\Lambda_m))$.
\item(4) The fact $\lambda_p\sim\mu_p$ on $\cB(\Lambda_m)$ follows from \eqref{equi} and the definition of $\lambda_p$.
\item(5) Prove that $\lambda_p$ is $\sigma_m$-invariant.
\newline In fact, for any $B\in\cB(\Lambda_m)$ and $n\in\N$, we have
    $$\lambda_p^n(\sigma_m^{-1}B)=\frac{1}{n}\sum_{k=1}^n\mu_p(\sigma_m^{-k}B)=\frac{1}{n}\sum_{k=0}^n\mu_p(\sigma_m^{-k}B)-\frac{\mu_p(B)}{n}=\frac{n+1}{n}\lambda_p^{n+1}(B)-\frac{\mu_p(B)}{n}.$$
    Let $n\to\infty$, we get $\lambda_p(\sigma_m^{-1}B)=\lambda_p(B)$.
\item(6) Prove that $(\Lambda_m,\cB(\Lambda_m),\lambda_p,\sigma_m)$ is ergodic.
\newline In fact, for any $B\in\cB(\Lambda_m)$ satisfying $\sigma_m^{-1}B=B$, by Lemma \ref{01} we get $\mu_p(B)=0$ or $1$, which implies $\lambda_p(B)=0$ or $1$ since $\lambda_p\sim\mu_p$.
\item(7) Prove that such $\lambda_p$ is unique on $\cB(\Lambda_m)$.
\newline Let $\lambda_p'$ be a $\sigma_m$-invariant ergodic probability measure on $(\Lambda_m,\cB(\Lambda_m))$ equivalent to $\mu_p$. Then for any $B\in\cB(\Lambda_m)$, by the Birkhoff Ergodic Theorem, we get
$$\lambda_p'(B)=\int\mathbbm{1}_B\text{ }d\lambda_p'=\lim_{n\to\infty}\frac{1}{n}\sum_{k=0}^{n-1}\mathbbm{1}_B(\sigma_m^kw)\quad\text{for }\lambda_p'\text{-a.e. }w\in\Lambda_m$$
and
$$\lambda_p(B)=\int\mathbbm{1}_B\text{ }d\lambda_p=\lim_{n\to\infty}\frac{1}{n}\sum_{k=0}^{n-1}\mathbbm{1}_B(\sigma_m^kw)\quad\text{for }\lambda_p\text{-a.e. }w\in\Lambda_m.$$
Since $\lambda_p'\sim\mu_p\sim \lambda_p$, there exists $w\in\Lambda_m$ such that $\lambda_p'(B)=\lim_{n\to\infty}\frac{1}{n}\sum_{k=0}^{n-1}\mathbbm{1}_B(\sigma_m^k w)=\lambda_p(B)$. It means that $\lambda_p'$ and $\lambda_p$ are the same on $\cB(\Lambda_m)$.
\end{proof}

\section{Hausdorff dimension in metric space}

First we need the following concepts.

\begin{definition}[Hausdorff measure and dimension in metric space]\label{dim in metric space} Let $(X,d)$ be a metric space. For any $U\subset X$, denote the diameter of $U$ by $|U|:=\sup_{x,y\in U}d(x,y)$. For any $A\subset X, s\ge0$ and $\delta>0$, let
$$\cH^s_\delta(A,d):=\inf\Big\{\sum_{i=1}^\infty|U_i|^s:A\subset\bigcup_{i=1}^\infty U_i\text{ and }|U_i|\le\delta\text{ for all }i\in\N\Big\}.$$
The \textit{$s$-dimensional Hausdorff measure} of $A$ in $(X,d)$ is defined by
$$\cH^s(A,d):=\lim_{\delta\to0}\cH^s_\delta(A,d)$$
and the \textit{Hausdorff dimension} of $A$ in $(X,d)$ is defined by
$$\dim_H(A,d):=\sup\{s\ge0:\cH^s(A,d)=\infty\}.$$
In $\R^n$ (equipped with the usual metric), we use $\cH^s(A)$ and $\dim_H A$ to denote the $s$-dimensional Hausdorff measure and the Hausdorff dimension of $A$ respectively for simplification.
\end{definition}

\begin{definition}
Let $\mu$ be a finite Borel measure on a metric space $(X,d)$. The \textit{lower local dimension} of $\mu$ at $x\in X$ is defined by
$$\underline{\dim}_{loc}\mu(x):=\varliminf_{r\to 0}\frac{\log\mu(B(x,r))}{\log r},$$
where $B(x,r):=\{y\in X:d(y,x)\le r\}$ is the closed ball centered on $x$ with radius $r$.
\end{definition}

In $\R^n$, we can use the lower local dimension to estimate the upper and lower bounds of the Hausdorff dimension by the following proposition, which is also called Billingsley Lemma.

\begin{proposition}[\cite{F97} Proposition 2.3]\label{Billingsley in Rn}
Let $E\subset\R^n$ be a Borel set, $\mu$ be a finite Borel measure on $\R^n$ and $s\ge0$.
\begin{itemize}
\item[\emph{(1)}] If $\underline{\dim}_{loc}\mu(x)\le s$ for all $x\in E$, then $\dim_HE\le s$.
\item[\emph{(2)}] If $\underline{\dim}_{loc}\mu(x)\ge s$ for all $x\in E$ and $\mu(E)>0$, then $\dim_HE\ge s$.
\end{itemize}
\end{proposition}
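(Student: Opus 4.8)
The plan is to establish the two inequalities separately, each by a covering argument that converts the local behaviour of $\mu$ into control on efficient covers of $E$. The single observation that drives both parts is that for small $r$ one has $\log r<0$, so any comparison between the ratio $\frac{\log\mu(B(x,r))}{\log r}$ and $s$ unwinds into a comparison between $\mu(B(x,r))$ and a power of $r$, with the inequality reversed.

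For part (1) I would fix $\epsilon>0$. The hypothesis $\underline{\dim}_{loc}\mu(x)\le s$ says that for every $x\in E$ there are arbitrarily small radii $r$ with $\frac{\log\mu(B(x,r))}{\log r}<s+\epsilon$, i.e. $\mu(B(x,r))>r^{s+\epsilon}$. Given $\delta>0$, the family $\cF_\delta$ of all such balls $B(x,r)$ with $x\in E$ and $r<\delta$ covers $E$ and has uniformly bounded radii, so the Vitali ($5r$-)covering lemma produces a countable disjoint subfamily $\{B(x_i,r_i)\}$ with $E\subset\bigcup_i B(x_i,5r_i)$. Since the $B(x_i,r_i)$ are disjoint and $\mu$ is finite,
$$\sum_i|B(x_i,5r_i)|^{s+\epsilon}=10^{s+\epsilon}\sum_i r_i^{s+\epsilon}<10^{s+\epsilon}\sum_i\mu(B(x_i,r_i))\le 10^{s+\epsilon}\mu(\R^n),$$
a bound independent of $\delta$. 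The covering sets have diameter $<10\delta$, so letting $\delta\to0$ gives $\cH^{s+\epsilon}(E)\le 10^{s+\epsilon}\mu(\R^n)<\infty$, whence $\dim_H E\le s+\epsilon$; as $\epsilon>0$ is arbitrary, $\dim_H E\le s$.

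For part (2) I would fix $t<s$. The hypothesis $\underline{\dim}_{loc}\mu(x)\ge s>t$ furnishes, for each $x\in E$, a radius $r_0(x)>0$ with $\mu(B(x,r))\le r^t$ for all $r<r_0(x)$. Setting $E_k:=\{x\in E:r_0(x)>1/k\}$ gives an increasing sequence of Borel sets with $\bigcup_k E_k=E$, so $\mu(E_k)\uparrow\mu(E)>0$ and hence $\mu(F)>0$ for $F:=E_k$ and some $k$; put $\rho:=1/k$. This is precisely the mass distribution principle: for any cover $\{U_i\}$ of $F$ by sets of diameter $<\rho$, picking $x_i\in U_i\cap F$ yields $U_i\subset B(x_i,|U_i|)$ and therefore $\mu(U_i)\le|U_i|^t$, so $\sum_i|U_i|^t\ge\sum_i\mu(U_i)\ge\mu(F)$. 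Thus $\cH^t(F)\ge\mu(F)>0$, forcing $\dim_H E\ge\dim_H F\ge t$; letting $t\uparrow s$ gives $\dim_H E\ge s$.

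The only genuinely nontrivial ingredient is the covering lemma in part (1): from the Vitali cover $\cF_\delta$ one must extract a \emph{disjoint} subfamily whose fivefold dilates still cover $E$, and this is exactly where the Euclidean structure of $\R^n$ enters (through the $5r$-covering lemma, or equivalently Besicovitch's covering theorem). Part (2) is elementary once the pointwise bound $\mu(B(x,r))\le r^t$ has been made uniform on a positive-measure piece via the exhaustion by the sets $E_k$; the points needing a little care there are the Borel measurability of each $E_k$ (which follows from the upper semicontinuity of $x\mapsto\mu(B(x,r))$) and the continuity from below $\mu(E_k)\uparrow\mu(E)$, together with reading $\mu(U_i)$ as outer measure so that countable subadditivity applies to the possibly non-measurable $U_i$.
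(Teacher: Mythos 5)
Your proof is correct, but it is worth noting that the paper never actually proves this proposition: it is quoted from \cite{F97} and serves only as motivation, the self-contained proof given in the paper being of a metric-space generalization of part (2) alone (Proposition \ref{Billingsley in metric} via Lemma \ref{measure Billingsley in metric}). Your argument for part (2) is essentially that same proof: you uniformize the pointwise bound into $\mu(B(x,r))\le r^t$ for all $r<1/k$ on an increasing sequence of Borel sets exhausting $E$ (your $E_k$, the paper's $E_\delta$), you address Borel measurability through upper semicontinuity of $x\mapsto\mu(B(x,q))$ and reduction to rational radii exactly as the paper does, and you conclude with the mass-distribution covering estimate $\sum_i|U_i|^t\ge\mu(F)>0$; the paper packages this as Lemma \ref{measure Billingsley in metric} (if $\varlimsup_{r\to0}\mu(B(x,r))/r^s<c$ on $E$ then $\cH^s(E,d)\ge\mu(E)/c$), but the content is identical. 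Your part (1), by contrast, supplies something the paper omits entirely: the Vitali $5r$-covering argument, extracting a countable disjoint subfamily of balls with $\mu(B(x_i,r_i))>r_i^{s+\epsilon}$ whose dilates cover $E$, giving $\cH^{s+\epsilon}(E)\le 10^{s+\epsilon}\mu(\R^n)<\infty$ and hence $\dim_H E\le s+\epsilon$; this is Falconer's own route. Your closing remark is also the right diagnosis of the asymmetry between the two parts: part (1) genuinely uses the covering structure of $\R^n$ (Vitali/Besicovitch), which is precisely why the paper can, and need only, generalize part (2) to arbitrary metric spaces for its application in Lemma \ref{lowerbound}.
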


We need to use the following version which is a generalization to metric spaces. For the sake of completeness we give a self-contained proof.

\begin{proposition}\label{Billingsley in metric}
Let $(X,d)$ be a metric space, $E\subset X$ be a Borel set, $\mu$ be a finite Borel measure on $X$ and $s\ge0$. If $\mu(E)>0$ and $\underline{\dim}_{loc}\mu(x)\ge s$ for all $x\in E$, then $\dim_H(E,d)\ge s$.
\end{proposition}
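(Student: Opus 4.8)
The plan is to prove the statement by showing $\dim_H(E,d)\ge t$ for every $t$ with $0<t<s$ (the case $s=0$ being trivial, since Hausdorff dimension is always nonnegative) and then letting $t\to s$. Working at a fixed level $t$ strictly below $s$ means I never have to argue at the critical exponent, and it reduces the whole problem to a single mass distribution estimate of the type underlying the Billingsley lemma, exactly as in part (2) of Proposition~\ref{Billingsley in Rn}.

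First I would convert the hypothesis $\underline{\dim}_{loc}\mu(x)\ge s$ into a usable upper bound on the $\mu$-measure of small balls. Fix $t\in(0,s)$. Since $\varliminf_{r\to0}\frac{\log\mu(B(x,r))}{\log r}\ge s>t$, for each $x\in E$ there is a radius $r(x)\in(0,1)$ such that $\frac{\log\mu(B(x,r))}{\log r}>t$ for all $0<r\le r(x)$; because $\log r<0$ for such $r$, multiplying through and reversing the inequality yields the key local bound $\mu(B(x,r))\le r^{t}$ for all $0<r\le r(x)$.

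I expect the main obstacle to be that the radius $r(x)$ depends on $x$, so this bound is not yet uniform and cannot be fed directly into a covering argument. I would overcome this by a standard exhaustion: set $E_k:=\{x\in E: r(x)\ge 1/k\}$, so that $E=\bigcup_{k\in\N}E_k$ with $E_1\subset E_2\subset\cdots$. By continuity of $\mu$ along the increasing sequence, $\lim_k\mu(E_k)=\mu(E)>0$, so $\mu(E_{k_0})>0$ for some $k_0$. Writing $F:=E_{k_0}$ and $\rho:=1/k_0$, I now have a set of positive measure on which the estimate is uniform: $\mu(B(x,r))\le r^{t}$ for every $x\in F$ and every $0<r\le\rho$.

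With uniformity secured, the remainder is the mass distribution principle carried out in an arbitrary metric space. Let $\{U_i\}_{i\in\N}$ be any cover of $F$ with $|U_i|\le\delta\le\rho$. For each $i$ with $U_i\cap F\neq\emptyset$ choose $x_i\in U_i\cap F$; since the balls in Definition are closed, $U_i\subset B(x_i,|U_i|)$, whence $\mu(U_i)\le\mu(B(x_i,|U_i|))\le|U_i|^{t}$ (the degenerate case $|U_i|=0$ forces $\mu(\{x_i\})=0$ by continuity of $\mu$, so it is harmless as $t>0$). Summing over those $i$ meeting $F$ and using $\mu(F)\le\sum_i\mu(U_i\cap F)$ gives $\mu(F)\le\sum_i|U_i|^{t}$, hence $\cH^{t}_\delta(F,d)\ge\mu(F)$ for all $\delta\le\rho$ and therefore $\cH^{t}(F,d)\ge\mu(F)>0$. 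Finally, positivity of $\cH^{t}(F,d)$ forces $\cH^{s'}(F,d)=\infty$ for every $s'<t$, via the scaling $\sum_i|U_i|^{s'}\ge\delta^{\,s'-t}\sum_i|U_i|^{t}$ together with $\delta^{\,s'-t}\to\infty$ as $\delta\to0$; by the definition $\dim_H(F,d)=\sup\{s':\cH^{s'}(F,d)=\infty\}$ this yields $\dim_H(F,d)\ge t$, and monotonicity of Hausdorff dimension with $F\subset E$ gives $\dim_H(E,d)\ge t$. Letting $t\to s$ completes the argument.
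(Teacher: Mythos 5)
Your proof follows essentially the same route as the paper's: fix $t\in(0,s)$, turn the hypothesis into the ball estimate $\mu(B(x,r))\le r^t$ for $r$ below a threshold $r(x)$, uniformize the threshold by an increasing exhaustion, and finish with the mass distribution (covering) argument. The covering computation itself is fine, and your final step deducing $\dim_H(F,d)\ge t$ from $\cH^t(F,d)>0$ is even slightly more careful than the paper's, since with the paper's definition $\dim_H(A,d)=\sup\{s'\ge0:\cH^{s'}(A,d)=\infty\}$ that deduction genuinely needs the scaling argument you supply.

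There is, however, a genuine gap at the sentence ``by continuity of $\mu$ along the increasing sequence, $\lim_k\mu(E_k)=\mu(E)>0$.'' Continuity from below is a property of measures evaluated on \emph{measurable} sets, and your sets $E_k=\{x\in E:r(x)\ge1/k\}$ are not known to be Borel; as written they are not even canonically determined, since they depend on an arbitrary choice of the function $x\mapsto r(x)$. This is precisely where the paper invests most of its effort: in Lemma \ref{measure Billingsley in metric} the exhaustion sets are defined without any choice, $E_\delta=\{x\in E:\mu(B(x,r))\le cr^s\text{ for all }r\in(0,\delta)\}$, and part (1) of that proof establishes that $E_\delta$ is Borel by writing it as $\bigcap_{q\in\Q\cap(0,\delta)}F_q$ and showing that $x\mapsto\mu(B(x,q))$ has open sublevel sets (upper semicontinuity); only after this is continuity from below legitimate. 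To close the gap you should either (i) define $E_k:=\{x\in E:\mu(B(x,r))\le r^t\text{ for all }r\in(0,1/k)\}$ and prove its measurability as the paper does, or (ii) work throughout with the outer measure induced by $\mu$ and invoke the fact --- true but nontrivial, proved via measurable hulls, and nowhere stated in your argument --- that such outer measures are continuous from below along arbitrary increasing sequences of sets. A milder instance of the same issue is your writing $\mu(U_i)$ and $\mu(U_i\cap F)$ for covering sets $U_i$ that need not be Borel; the clean fix, and what the paper actually does, is to sum $\mu$ over the closed balls $B(x_i,|U_i|)$ covering $F$ and use countable subadditivity, never evaluating $\mu$ on the $U_i$ themselves.
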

\begin{proof}
If $s=0$, the conclusion is obvious. If $s>0$, let $t\in(0,s)$. For any $x\in E$, by $\varliminf_{r\to0}\frac{\log\mu(B(x,r))}{\log r}>t$, there exists $\delta_x\in(0,1)$ such that for any $r\in(0,\delta_x)$, $\frac{\log\mu(B(x,r))}{\log r}>t$ and then $\mu(B(x,r))<r^t$. Thus $\varlimsup_{r\to0}\frac{\mu(B(x,r))}{r^t}\le1<2$ for all $x\in E$. By Lemma \ref{measure Billingsley in metric}, we get $\cH^t(E,d)\ge\frac{\mu(E)}{2}>0$. Thus $\dim_H(E,d)\ge t$ for all $t\in(0,s)$, which implies $\dim_H(E,d)\ge s$.
\end{proof}

\begin{lemma}\label{measure Billingsley in metric}
Let $(X,d)$ be a metric space, $E\subset X$ be a Borel set, $\mu$ be a finite Borel measure on $X$, $s\ge0$ and $c>0$. If $\varlimsup_{r\to0}\frac{\mu(B(x,r))}{r^s}<c$ for all $x\in E$, then $\cH^s(E,d)\ge\frac{\mu(E)}{c}$.
\end{lemma}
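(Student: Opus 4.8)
The plan is to prove the \emph{mass distribution principle} in this metric setting, which is exactly the direction of Billingsley's lemma that survives without any covering theorem: no Vitali or Besicovitch structure is needed because we are only after the lower bound $\cH^s(E,d)\ge\mu(E)/c$. If $\mu(E)=0$ the inequality is trivial, so I assume $\mu(E)>0$.

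The first step is to convert the pointwise hypothesis, whose effective radius depends on $x$, into a statement with a uniform radius on a set of nearly full measure. From $\varlimsup_{r\to0}\mu(B(x,r))/r^s<c$ I extract, for each $x\in E$, a threshold $\delta_x>0$ with $\mu(B(x,r))<cr^s$ for all $r\in(0,\delta_x)$. I then set
$$E_n:=\Big\{x\in E:\mu(B(x,r))<cr^s\text{ for all }r\in(0,1/n)\Big\},$$
so that $E_1\subset E_2\subset\cdots$ and $\bigcup_{n=1}^\infty E_n=E$ (each $x$ lands in $E_n$ once $1/n<\delta_x$); by continuity of $\mu$ from below, $\mu(E_n)\to\mu(E)$.

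The second step is the covering estimate. Fix $n$ and take any $\delta\in(0,1/n)$ together with an arbitrary countable cover $\{U_i\}$ of $E$ with $|U_i|\le\delta$. For each index $i$ with $U_i\cap E_n\neq\emptyset$ I choose $x_i\in U_i\cap E_n$; since every $y\in U_i$ satisfies $d(y,x_i)\le|U_i|$, I get $U_i\subset B(x_i,|U_i|)$, and because $|U_i|\le\delta<1/n$ and $x_i\in E_n$ this yields $\mu(U_i)\le\mu(B(x_i,|U_i|))\le c|U_i|^s$. Summing over those sets meeting $E_n$ (which already cover $E_n$) gives
$$\mu(E_n)\le\sum_i\mu(U_i)\le c\sum_i|U_i|^s,$$
hence $\sum_i|U_i|^s\ge\mu(E_n)/c$. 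Taking the infimum over all such covers shows $\cH^s_\delta(E,d)\ge\mu(E_n)/c$, a bound independent of $\delta\in(0,1/n)$, so letting $\delta\to0$ gives $\cH^s(E,d)\ge\mu(E_n)/c$, and letting $n\to\infty$ produces $\cH^s(E,d)\ge\mu(E)/c$.

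The only genuinely delicate point is the first step: the threshold radius $\delta_x$ is not uniform in $x$, and the argument hinges on replacing $E$ by the exhausting sequence $E_n$ on which the uniform radius $1/n$ works, then using continuity of measure to recover $\mu(E)$ in the limit. A minor technical nuisance is a covering set $U_i$ of zero diameter (a single point): when $s>0$ the hypothesis forces $\mu(\{x_i\})=0=c|U_i|^s$, so the estimate $\mu(U_i)\le c|U_i|^s$ persists, and the degenerate case $s=0$ can be dispatched directly; in the only application, Proposition \ref{Billingsley in metric}, one has $s>0$.
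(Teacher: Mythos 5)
Your proposal follows the same strategy as the paper's own proof: exhaust $E$ by sets $E_n$ on which the density estimate holds uniformly for all radii below $1/n$, run the covering argument against each $E_n$, and pass to the limit. The covering step itself is sound, and your explicit treatment of zero-diameter covering sets and of the case $s=0$ is in fact slightly more careful than the paper's.

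However, there is a genuine gap at precisely the point you wave through: you apply $\mu$ to the sets $E_n$ and invoke ``continuity of $\mu$ from below'' to get $\mu(E_n)\to\mu(E)$. Continuity from below is a statement about \emph{measurable} sets, and it is not at all obvious that $E_n$ is Borel: it is carved out by the uncountable family of conditions $\mu(B(x,r))<cr^s$, $r\in(0,1/n)$, so its measurability hinges on regularity of the map $x\mapsto\mu(B(x,r))$. This is exactly the issue to which the paper devotes half of its proof (its part (1)): it shows that $f(x)=\mu(B(x,q))$ is upper semicontinuous, hence Borel --- using continuity of $\mu$ from above along the closed balls $B(x_0,q+\delta)\downarrow B(x_0,q)$ --- then writes $E_\delta=\bigcap_{q\in\Q\cap(0,\delta)}F_q$ with $F_q=\{x\in E:\mu(B(x,q))\le cq^s\}$, justifying by another continuity-from-above argument that the intersection over rational radii alone suffices. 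Without something of this kind, both your inequality $\mu(E_n)\le\sum_i\mu(U_i)$ (where the $U_i$ are arbitrary sets in a Hausdorff cover, so $\mu(U_i)$ is not even defined) and the limit $\mu(E_n)\to\mu(E)$ are unjustified. The gap can be closed in two standard ways: prove Borel measurability of $E_n$ as the paper does, or systematically replace $\mu$ on arbitrary sets by the induced Borel-regular outer measure $\mu^*$ and invoke the fact that regular outer measures are continuous along increasing sequences of arbitrary sets. One of these arguments must actually be supplied; as written, your proof silently assumes the very point that makes the metric-space version of this lemma delicate, while identifying only the non-uniformity of $\delta_x$ (which is the easy part) as the obstacle.
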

\begin{proof} For any $\delta>0$, let
$$E_\delta:=\{x\in E:\mu(B(x,r))\le cr^s\text{ for all }r\in(0,\delta)\}.$$
\item(1) Prove that $E_\delta$ is a Borel set. We define
$$F_q:=\{x\in E:\mu(B(x,q))\le cq^s\}\quad\text{for }q\in\Q.$$
It suffices to prove the following \textcircled{\footnotesize{$1$}} and \textcircled{\footnotesize{$2$}}.
\begin{itemize}
\item[\textcircled{\footnotesize{$1$}}] Prove $E_\delta=\bigcap_{q\in\Q\cap(0,\delta)}F_q$.
\newline$\boxed{\subset}$ follows from $E_\delta\subset F_q$ for all $q\in\Q\cap(0,\delta)$.
\newline$\boxed{\supset}$ Let $x\in\bigcap_{q\in\Q\cap(0,\delta)}F_q$. For any $r\in(0,\delta)$, there exist $q_1,q_2,\cdots,q_n,\cdots\in\Q\cap(0,\delta)$ decreasing to $r$. By $x\in\bigcap_{n=1}^\infty F_{q_n}$ we get $\mu(B(x,q_n))\le cq_n^s$ for all $n\in\N$. Thus
$$\mu(B(x,r))=\mu(\bigcap_{n=1}^\infty B(x,q_n))=\lim_{n\to\infty}\mu(B(x,q_n))\le\lim_{n\to\infty}cq_n^s=cr^s.$$
This implies $x\in E_\delta$.
\item[\textcircled{\footnotesize{$2$}}] Prove that $F_q$ is a Borel set.
\newline Define $f(x):=\mu(B(x,q))$ for $x\in X$. Then $F_q=E\cap f^{-1}(-\infty,cq^s]$. We only need to prove that $f$ is a Borel function. For any $a\in\R$, it suffices to prove that $f^{-1}(-\infty,a)$ is an open set. If $f^{-1}(-\infty,a)=\emptyset$, it is obviously open. We only need to consider $f^{-1}(-\infty,a)\neq\emptyset$ in the following.

Let $x_0\in f^{-1}(-\infty,a)$. Then $\mu(B(x_0,q))<a$. Since $\mu(B(x_0,q+\delta))$ decreases to $\mu(B(x_0,q))$ as $\delta$ decreases to $0$, there exists $\delta_0>0$ such that $\mu(B(x_0,q+\delta_0))<a$. It suffices to prove that the open ball $B^o(x_0,\delta_0):=\{x\in X:d(x,x_0)<\delta_0\}\subset f^{-1}(-\infty,a)$.

In fact, for any $x\in B^o(x_0,\delta_0)$, by $B(x,q)\subset B(x_0,q+\delta_0)$ we get $\mu(B(x,q))\le\mu(B(x_0,q+\delta_0))<a$, which implies $x\in f^{-1}(-\infty,a)$.
\end{itemize}
\item(2) Prove that $E_\delta$ increases to $E$ as $\delta$ decreases to $0$.
\begin{itemize}
\item[\textcircled{\footnotesize{$1$}}] If $0<\delta_2<\delta_1$, then obviously $E_{\delta_1}\subset E_{\delta_2}$.
\item[\textcircled{\footnotesize{$2$}}] Prove $E=\cup_{\delta>0}E_\delta$.
\newline$\boxed{\supset}$ follows from $E\supset E_\delta$ for all $\delta>0$.
\newline$\boxed{\subset}$ Let $x\in E$. By $\varlimsup_{r\to0}\frac{\mu(B(x,r))}{r^s}<c$, there exists $\delta_x>0$ such that for all $r\in(0,\delta_x)$, $\mu(B(x,r))\le cr^s$. Thus $x\in E_{\delta_x}\subset\cup_{\delta>0}E_\delta$.
\end{itemize}
\item(3) Prove $\cH^s(E,d)\ge\frac{\mu(E)}{c}$.
\newline Fix $\delta>0$. Let $\{U_k\}_{k\in K}$ be a countable $\delta$-cover of $E$, i.e.,
$$|U_k|\le\delta\text{ for all }k\in K\quad\text{and}\quad \bigcup_{k\in K}U_k\supset E\text{ }(\supset E_\delta).$$
Let $K':=\{k\in K: U_k\cap E_\delta\neq\emptyset\}$. Then $\bigcup_{k\in K'}U_k\supset E_\delta$. For any $k\in K'$, let $x_k\in U_k\cap E_\delta$ and $B_k:=B(x_k,|U_k|)\supset U_k$. Then $\bigcup_{k\in K'}B_k\supset E_\delta$. It follows that
$$\sum_{k\in K}|U_k|^s\ge\frac{1}{c}\sum_{k\in K'}c|U_k|^s\overset{(\star)}{\ge}\frac{1}{c}\sum_{k\in K'}\mu(B(x_k,|U_k|))\ge\frac{1}{c}\cdot\mu(\bigcup_{k\in K'}B_k)\ge\frac{\mu(E_\delta)}{c}$$
where $(\star)$ follows from $x_k\in E_\delta$. By the randomness of the choice of the $\delta$-cover $\{U_k\}_{k\in K}$, we get $\cH^s_\delta(E,d)\ge\frac{\mu(E_\delta)}{c}$ and then $\cH^s(E,d)\ge\frac{\mu(E_\delta)}{c}$. Let $\delta\to0$, by (1) and (2) we get $\cH^s(E,d)\ge\frac{\mu(E)}{c}$.
\end{proof}

\begin{definition}
Let $(X,d)$ and $(X',d')$ be two metric spaces. A map $f:X\to X'$ is called \textit{Lipschitz continuous} if there exists a constant $c>0$ such that
$$d'(f(x),f(y))\le c d(x,y)\quad\text{for all }x,y\in X.$$
\end{definition}

The following property of Lipschitz continuous map is well known (see for examples \cite{F90,F97}).

\begin{lemma}\label{Lipschitz}
Let $(X,d)$ and $(X',d')$ be two metric spaces and $f:X\to X'$ be a Lipschitz continuous map. Then for any $E\subset X$, we have
$$\dim_H(f(E),d')\le\dim_H(E,d).$$
\end{lemma}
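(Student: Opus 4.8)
The plan is to establish the single scaling inequality
$$\cH^s(f(E),d')\le c^s\,\cH^s(E,d)\quad\text{for every }s\ge0,$$
where $c>0$ is a Lipschitz constant for $f$, and then read off the dimension bound from the definition of $\dim_H$. First I would prove this scaling inequality directly from the definition of $\cH^s_\delta$. Fix $\delta>0$ and let $\{U_i\}_{i\in\N}$ be any $\delta$-cover of $E$. Since $f$ is Lipschitz, $|f(U_i)|\le c|U_i|\le c\delta$ for each $i$, and $\{f(U_i)\}_{i\in\N}$ covers $f(E)$ because $E\subset\bigcup_iU_i$ forces every $y=f(x)\in f(E)$ into some $f(U_i)$. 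Hence $\{f(U_i)\}$ is a $(c\delta)$-cover of $f(E)$, giving
$$\cH^s_{c\delta}(f(E),d')\le\sum_{i=1}^\infty|f(U_i)|^s\le c^s\sum_{i=1}^\infty|U_i|^s.$$
Taking the infimum over all $\delta$-covers of $E$ yields $\cH^s_{c\delta}(f(E),d')\le c^s\,\cH^s_\delta(E,d)$, and letting $\delta\to0$ (so that $c\delta\to0$ as well) gives the claimed inequality.

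Second, I would convert this measure estimate into the dimension statement. The relevant elementary fact, immediate from the definition of $\cH^s_\delta$, is the threshold dichotomy: whenever $\cH^s(A,d)<\infty$ one has $\cH^t(A,d)=0$ for all $t>s$, since for a $\delta$-cover one computes $\sum_i|U_i|^t\le\delta^{t-s}\sum_i|U_i|^s$, hence $\cH^t_\delta(A,d)\le\delta^{t-s}\cH^s_\delta(A,d)\to0$ as $\delta\to0$. Thus $\{s\ge0:\cH^s(A,d)=\infty\}$ is a down-set whose supremum is exactly $\dim_H(A,d)$. Consequently, for any $s>\dim_H(E,d)$ one has $\cH^s(E,d)<\infty$, and then the scaling inequality forces $\cH^s(f(E),d')\le c^s\,\cH^s(E,d)<\infty$, so $s\ge\dim_H(f(E),d')$. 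As this holds for every $s>\dim_H(E,d)$, taking the infimum over such $s$ gives $\dim_H(f(E),d')\le\dim_H(E,d)$.

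The only genuinely delicate point is matching the argument to the paper's convention $\dim_H(A,d):=\sup\{s\ge0:\cH^s(A,d)=\infty\}$, rather than the more common $\inf\{s:\cH^s(A,d)=0\}$; both define the same quantity precisely because of the threshold dichotomy above, which is why I would spell out the short estimate $\cH^t_\delta(A,d)\le\delta^{t-s}\cH^s_\delta(A,d)$. Apart from this bookkeeping, every step is a routine covering estimate, so I expect no substantial obstacle.
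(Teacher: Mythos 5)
Your proof is correct. There is nothing internal to compare it against: the paper does not prove this lemma at all, but states it as well known and cites Falconer's books, and your argument is exactly the standard one from those references --- the covering estimate $\cH^s_{c\delta}(f(E),d')\le c^s\,\cH^s_\delta(E,d)$ followed by passage to dimensions. The one point genuinely worth spelling out is the one you flagged: since the paper defines $\dim_H(A,d)=\sup\{s\ge0:\cH^s(A,d)=\infty\}$ rather than $\inf\{s\ge0:\cH^s(A,d)=0\}$, the threshold dichotomy $\cH^t_\delta(A,d)\le\delta^{t-s}\cH^s_\delta(A,d)$ is needed to turn finiteness of $\cH^s(f(E),d')$ for every $s>\dim_H(E,d)$ into a bound on that supremum, and your write-up supplies precisely that step.
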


In order to keep the paper self-contained we give a proof of the following useful proposition, which is a special case of \cite[Theorem 1.1]{L19}.

\begin{proposition}\label{dimension equality}
For any $E\subset\{0,1\}^\N$, we have
$$\dim_H(E,d_2)=\dim_H \pi_2(E).$$
\end{proposition}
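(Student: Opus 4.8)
The plan is to prove the two inequalities $\dim_H \pi_2(E) \le \dim_H(E, d_2)$ and $\dim_H(E, d_2) \le \dim_H \pi_2(E)$ separately. The first is soft and the second carries all the content.

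For the first inequality I would verify that $\pi_2 \colon (\{0,1\}^\N, d_2) \to [0,1]$ is Lipschitz continuous with constant $1$. Indeed, if $w, v \in \{0,1\}^\N$ agree on their first $k$ coordinates and differ at the $(k+1)$-st, then $d_2(w,v) = 2^{-k}$ while $|\pi_2(w) - \pi_2(v)| = |\sum_{n=k+1}^\infty (w_n - v_n) 2^{-n}| \le \sum_{n=k+1}^\infty 2^{-n} = 2^{-k} = d_2(w,v)$. Lemma \ref{Lipschitz} applied to $f = \pi_2$ then gives $\dim_H \pi_2(E) = \dim_H(\pi_2(E), |\cdot|) \le \dim_H(E, d_2)$ at once.

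For the reverse inequality the idea is to transport economical interval covers of $\pi_2(E)$ back to cylinder covers of $E$. The geometric dictionary I would set up is that a cylinder $[u]$ with $|u| = k$ is a closed $d_2$-ball of radius $2^{-k}$, has $d_2$-diameter $2^{-k}$, and projects under $\pi_2$ to a dyadic interval of length $2^{-k}$; moreover the level-$k$ dyadic intervals tile $[0,1)$. Fix $s > \dim_H \pi_2(E)$, so that $\cH^s(\pi_2(E)) = 0$, and let $\{I_i\}_{i}$ be an interval cover of $\pi_2(E)$ with $\sum_i |I_i|^s$ as small as we like and $\sup_i |I_i|$ small. For each $i$ choose $k_i \in \N$ with $2^{-(k_i+1)} \le |I_i| < 2^{-k_i}$; then $I_i$ meets at most two dyadic intervals of level $k_i$. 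The one subtlety, which I expect to be the only real obstacle, is that $\pi_2$ is not injective: the dyadic rationals have two preimages. I would neutralise this by observing that the $\pi_2$-preimage of a single level-$k_i$ dyadic interval is contained in the union of at most three level-$k_i$ cylinders, namely that cylinder together with its two neighbours, the neighbours being needed only to absorb the (at most two) boundary sequences mapping to the shared dyadic endpoints. Consequently $\pi_2^{-1}(I_i)$, and hence the portion of $E$ it contains, is covered by at most six cylinders, each of $d_2$-diameter $2^{-k_i} \le 2|I_i|$.

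Summing, the resulting cylinder cover of $E$ satisfies $\sum_i 6\,(2^{-k_i})^s \le 6 \cdot 2^s \sum_i |I_i|^s$, and its mesh tends to $0$ together with $\sup_i |I_i|$. Passing to the infimum over interval covers and letting the mesh go to $0$ yields $\cH^s(E, d_2) \le 6 \cdot 2^s\, \cH^s(\pi_2(E)) = 0$ for every $s > \dim_H \pi_2(E)$, whence $\dim_H(E, d_2) \le \dim_H \pi_2(E)$. Combining the two inequalities gives the claimed equality. The decisive structural fact behind the whole argument is the bounded-multiplicity count above: each covering interval forces only a uniformly bounded number of cylinders of comparable diameter, which is exactly what makes the countable set of dyadic ambiguities harmless for Hausdorff measure.
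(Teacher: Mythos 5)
Your proof is correct and follows essentially the same strategy as the paper: the easy inequality via Lipschitz continuity of $\pi_2$ (Lemma~\ref{Lipschitz}), and the reverse inequality by converting a $\delta$-cover of $\pi_2(E)$ into a cover of $E$ by dyadic objects of comparable diameter and bounded multiplicity, yielding an estimate of the form $\cH^s(E,d_2)\le C_s\,\cH^s(\pi_2(E))$ and hence $\dim_H(E,d_2)\le\dim_H\pi_2(E)$. The one point where you diverge is in fact an improvement: the paper covers $E$ directly by the preimages $\pi_2^{-1}I_{k,j}$ of dyadic intervals and asserts $|\pi_2^{-1}I_{k,j}|=2^{-n_k}$, which is not literally true because of the double representations of dyadic rationals --- for instance $\pi_2^{-1}([1/2,3/4])$ contains both $0111\cdots$ and $11000\cdots$, so its $d_2$-diameter is $1$, not $1/4$. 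Your replacement of each such preimage by the corresponding cylinder together with its two neighbours (at most six cylinders per covering interval $I_i$, each of diameter $2^{-k_i}\le 2|I_i|$) handles exactly this ambiguity, at the harmless cost of the constant $6\cdot 2^s$ in place of the paper's $2^{s+1}$; an equally standard alternative would be to cover the at most countably many offending sequences by singletons, which contribute nothing to the $s$-dimensional sums for $s>0$. So your write-up is not only correct but slightly more careful than the paper's own argument at its only delicate step.
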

\begin{proof} $\boxed{\ge}$ follows from the fact that $\pi_2:\{0,1\}^\N\to[0,1]$ is Lipschitz continuous.
\newline$\boxed{\le}$ (1) Prove that for any $s>0$, $\cH^s(E,d_2)\le2^{s+1}\cH^s(\pi_2(E))$.
\newline Fix $\delta\in(0,\frac{1}{2})$. Let $\{U_k\}_{k\in K}$ be a countable $\delta$-cover of $\pi_2(E)$, i.e., $0<|U_k|\le\delta$ and $\pi_2(E)\subset\bigcup_{k\in K} U_k$. Then for each $U_k$, there exists $n_k\in\N$ such that $\frac{1}{2^{n_k+1}}<|U_k|\le\frac{1}{2^{n_k}}$. Since the length of every cylinder in $[0,1]$ of order $n_k$ is $\frac{1}{2^{n_k}}$, every $U_k$ can be cover by at most two consecutive cylinders $I_{k,1}$ and $I_{k,2}$ of order $n_k$. This implies
$$\pi_2(E)\subset\bigcup_{k\in K}\bigcup_{j=1,2}I_{k,j}\quad\text{and then}\quad E\subset\bigcup_{k\in K}\bigcup_{j=1,2}\pi_2^{-1}I_{k,j}.$$
By $|\pi_2^{-1}I_{k,j}|=\frac{1}{2^{n_k}}<2|U_k|\le2\delta$, we get
$$\cH^s_{2\delta}(E,d_2)\le\sum_{k\in K}\sum_{j=1,2}|\pi_2^{-1}I_{k,j}|^s=\sum_{k\in K}\frac{2}{2^{n_ks}}=2^{s+1}\sum_{k\in K}(\frac{1}{2^{n_k+1}})^s<2^{s+1}\sum_{k\in K}|U_k|^s.$$
It follows from the randomness of the choice of the $\delta$-cover $\{U_k\}_{k\in K}$ that $\cH^s_{2\delta}(E,d_2)\le2^{s+1}\cH^s_\delta(\pi_2(E))$. Let $\delta\to 0$, we get $\cH^s(E,d_2)\le2^{s+1}\cH^s(\pi_2(E))$.
\item(2) Prove $\dim_H(E,d_2)\le\dim_H\pi_2(E)$.
\newline If $\dim_H(E,d_2)=0$, the conclusion is obvious. If $\dim_H(E,d_2)>0$, then for any $s\in(0,\dim_H(E,d_2))$, by $\cH^s(E,d_2)=\infty$ and (1), we get $\cH^s(\pi_2(E))=\infty$. It means that $s\le\dim_H\pi_2(E)$ for any $s\in(0,\dim_H(E,d_2))$. Thus $\dim_H(E,d_2)\le\dim_H\pi_2(E)$.
\end{proof}

\section{Proof of the main result}

For any $x\in[0,1)$, let $\epsilon_1(x)\epsilon_2(x)\cdots\epsilon_n(x)\cdots$ be the \textit{greedy binary expansion} of $x$. Recall the following well known result.

\begin{theorem}[\cite{E49}]\label{classical} For any $p\in[0,1]$, we have
$$\dim_H\Big\{x\in[0,1):\lim_{n\to\infty}\frac{|\epsilon_1(x)\cdots \epsilon_n(x)|_0}{n}=p\Big\}=\frac{-p\log p-(1-p)\log(1-p)}{\log2}$$
where $0\log0:=0$.
\end{theorem}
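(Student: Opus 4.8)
The plan is to prove the two inequalities $\dim_HE_p\ge H(p)$ and $\dim_HE_p\le H(p)$ separately, where I abbreviate $H(p):=\frac{-p\log p-(1-p)\log(1-p)}{\log2}$ and $E_p:=\{x\in[0,1):\lim_{n\to\infty}|\epsilon_1(x)\cdots\epsilon_n(x)|_0/n=p\}$. At the endpoints $p\in\{0,1\}$ we have $H(p)=0$, so the lower bound is automatic; the upper bound at the endpoints will be absorbed into the general covering argument below. Thus the substantive content is for $p\in(0,1)$, and the idea is to realize $E_p$ (up to a negligible set) inside the symbolic space $(\{0,1\}^\N,d_2)$, where the machinery of Section 5 applies cleanly.

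For the lower bound I would place the $(p,1-p)$ Bernoulli measure $\nu_p$ on $(\{0,1\}^\N,d_2)$, defined by $\nu_p[w]=p^{|w|_0}(1-p)^{|w|_1}$ for every finite word $w$. By the strong law of large numbers $\nu_p(\widetilde E_p)=1$, where $\widetilde E_p:=\{w\in\{0,1\}^\N:\lim_{n\to\infty}|w_1\cdots w_n|_0/n=p\}$. The decisive simplification is that in the metric $d_2$ the closed ball $B(w,r)$ equals the cylinder $[w_1\cdots w_n]$ for every $r\in[2^{-n},2^{-(n-1)})$, so that for $w\in\widetilde E_p$
$$\frac{\log\nu_p(B(w,2^{-n}))}{\log2^{-n}}=\frac{|w_1\cdots w_n|_0\log p+|w_1\cdots w_n|_1\log(1-p)}{-n\log2}\longrightarrow H(p)$$
as $n\to\infty$, and the intermediate radii are squeezed between consecutive dyadic values, whence $\underline{\dim}_{loc}\nu_p(w)=H(p)$ for every $w\in\widetilde E_p$. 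Proposition \ref{Billingsley in metric} (applicable since $\nu_p(\widetilde E_p)=1>0$) then gives $\dim_H(\widetilde E_p,d_2)\ge H(p)$, and Proposition \ref{dimension equality} yields $\dim_H\pi_2(\widetilde E_p)\ge H(p)$. Since for $p\in(0,1)$ the sets $E_p$ and $\pi_2(\widetilde E_p)$ coincide off the countable set of dyadic rationals (where the greedy expansion departs from the generic binary one), which has Hausdorff dimension $0$, I conclude $\dim_HE_p\ge H(p)$.

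For the upper bound I would use a direct covering valid for all $p\in[0,1]$. Fix $\epsilon>0$ and set $\mathcal C_n:=\{w\in\{0,1\}^n:|\,|w|_0/n-p|\le\epsilon\}$. Every $x\in E_p$ has $\epsilon_1(x)\cdots\epsilon_n(x)\in\mathcal C_n$ for all large $n$, so $E_p\subset\bigcup_{N\in\N}\bigcap_{n\ge N}\{x:|\,k_n(x)/n-p|\le\epsilon\}$ with $k_n(x)=|\epsilon_1(x)\cdots\epsilon_n(x)|_0$; for each fixed $N$ and each $M\ge N$ the inner intersection is covered by the dyadic intervals $\{I_w:w\in\mathcal C_M\}$, each of length $2^{-M}$. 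Using the entropy estimate $\binom{M}{k}\le2^{MH(k/M)}$ (which holds because $H$ is exactly the base-$2$ entropy) together with the continuity of $H$, one gets $\sharp\mathcal C_M\le(M+1)2^{M\theta_\epsilon}$ with $\theta_\epsilon:=\max_{q\in[p-\epsilon,p+\epsilon]\cap[0,1]}H(q)$, so that
$$\cH^s_{2^{-M}}\Big(\bigcap_{n\ge N}\{x:|\,k_n(x)/n-p|\le\epsilon\}\Big)\le\sum_{w\in\mathcal C_M}2^{-Ms}\le(M+1)\,2^{M(\theta_\epsilon-s)}.$$
For $s>\theta_\epsilon$ the right-hand side tends to $0$ as $M\to\infty$, so each inner intersection has dimension at most $\theta_\epsilon$; a countable union over $N$ gives the same bound for $E_p$, and letting $\epsilon\to0$ (with $\theta_\epsilon\to H(p)$ by continuity) yields $\dim_HE_p\le H(p)$. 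For interior $p$ one could instead push $\nu_p$ forward by $\pi_2$ and invoke Proposition \ref{Billingsley in Rn}(1) with the inclusion $I_n(x)\subset B(x,2^{-n})$ to obtain $\underline{\dim}_{loc}(\pi_2)_*\nu_p\le H(p)$ on $E_p$, but the covering argument has the advantage of treating the endpoints uniformly.

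The main obstacle is the careful passage between the symbolic and the real settings rather than any single hard estimate: one must exploit the ball$\,=\,$cylinder identity for $d_2$ so that the local dimension of $\nu_p$ is computed \emph{exactly} (not merely bounded), and one must check that the non-injectivity of $\pi_2$ at dyadic rationals is harmless, which holds only because that exceptional set is countable and hence of dimension $0$. Beyond this, the upper bound is essentially combinatorial, resting on $\binom{M}{k}\le2^{MH(k/M)}$ and the continuity of $q\mapsto H(q)$ at $q=p$.
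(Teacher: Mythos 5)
Your proof is correct, but be aware that the paper does not prove this statement at all: Theorem \ref{classical} is quoted directly from Eggleston \cite{E49} and used as a black box (together with Lemma \ref{upper bound special of LLS}, itself cited from \cite{LLS19}, and Proposition \ref{dimension equality}) to obtain Theorem \ref{global dimension}. So your argument is necessarily a different route --- namely, an actual self-contained proof --- and it is a sensible one because it reuses the paper's own Section 5 machinery. Your lower bound (the i.i.d.\ Bernoulli measure $\nu_p$ on $(\{0,1\}^\N,d_2)$, the strong law of large numbers, the ball-equals-cylinder identity, Proposition \ref{Billingsley in metric}, then Proposition \ref{dimension equality} plus countability of the dyadic exceptional set) is exactly the unconstrained full-shift analogue of the paper's proof of Lemma \ref{lowerbound}: there, the prohibition of $0^m$ and $1^m$ forces the author to replace the product measure by the Bernoulli-type measure $\mu_{q_m}$ of Section 4 and to construct the equivalent invariant ergodic measure $\lambda_{q_m}$ before Birkhoff's theorem can be invoked, whereas in your setting $\nu_p$ is already shift-invariant and ergodic, so all of Section 4 collapses to the SLLN. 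Your upper bound, via the count $\sharp\mathcal{C}_M\le(M+1)2^{M\theta_\epsilon}$ coming from $\binom{M}{k}\le 2^{MH(k/M)}$ and a cover by dyadic intervals of length $2^{-M}$, replaces the paper's citation of \cite{LLS19} with the standard Eggleston-type combinatorial argument, and it correctly handles the endpoints $p\in\{0,1\}$, which your parenthetical alternative via Proposition \ref{Billingsley in Rn}(1) would not (for $p\in\{0,1\}$ the measure $\nu_p$ is a point mass and the local-dimension bound degenerates). Two routine points are left implicit and should be stated for completeness, though neither is a gap: $\widetilde E_p$ must be Borel for Proposition \ref{Billingsley in metric} to apply (this is verified exactly as for $G_p$ in the paper's proof of Lemma \ref{lowerbound}), and the existence of $\nu_p$ as a Borel probability measure on $(\{0,1\}^\N,d_2)$ rests on the Carath\'eodory extension theorem quoted in Section 2.
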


The following lemma is a special case of \cite[Theorem 6.1]{LLS19}.

\begin{lemma}\label{upper bound special of LLS}
For any $p\in[0,1]$, we have
$$\dim_H\Big\{x\in[0,1):\varliminf_{n\to\infty}\frac{|\epsilon_1(x)\cdots \epsilon_n(x)|_0}{n}=p\Big\}\le\frac{-p\log p-(1-p)\log(1-p)}{\log2}$$
and
$$\dim_H\Big\{x\in[0,1):\varlimsup_{n\to\infty}\frac{|\epsilon_1(x)\cdots \epsilon_n(x)|_0}{n}=p\Big\}\le\frac{-p\log p-(1-p)\log(1-p)}{\log2}$$
where $0\log0:=0$.
\end{lemma}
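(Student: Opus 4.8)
The plan is to obtain each upper bound by covering the relevant set with dyadic cylinders and controlling their number through the classical entropy estimate for partial binomial sums. Write $h(q):=\frac{-q\log q-(1-q)\log(1-q)}{\log2}$ for the right-hand side, and recall that $h$ is continuous on $[0,1]$, satisfies $h(q)=h(1-q)$, and is increasing on $[0,\tfrac12]$. For $x\in[0,1)$ put $f_n(x):=\frac1n|\epsilon_1(x)\cdots\epsilon_n(x)|_0$, so that $1-f_n(x)=\frac1n|\epsilon_1(x)\cdots\epsilon_n(x)|_1$, and note that $\{x:\epsilon_1(x)\cdots\epsilon_n(x)=w\}$ is a dyadic interval of length $2^{-n}$. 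Everything will be reduced to the single estimate: for every $q\in[0,\tfrac12]$ and $i\in\{0,1\}$,
\begin{equation*}
\dim_H\Big\{x\in[0,1):|\epsilon_1(x)\cdots\epsilon_n(x)|_i\le qn\text{ for infinitely many }n\Big\}\le h(q).\tag{$\ast$}
\end{equation*}

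To prove $(\ast)$ I would first invoke the bound $\sum_{k\le qn}\binom{n}{k}\le 2^{nh(q)}$ for $q\le\tfrac12$, which follows from $1\ge\sum_{k\le qn}\binom{n}{k}q^k(1-q)^{n-k}\ge 2^{-nh(q)}\sum_{k\le qn}\binom{n}{k}$, using that $k\mapsto q^k(1-q)^{n-k}$ is nonincreasing for $q\le\tfrac12$ and hence is $\ge q^{qn}(1-q)^{(1-q)n}=2^{-nh(q)}$ on $\{k\le qn\}$. Consequently, for each $n$ the set $\{x:|\epsilon_1(x)\cdots\epsilon_n(x)|_i\le qn\}$ is a union of at most $2^{nh(q)}$ dyadic intervals of length $2^{-n}$. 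Calling $S$ the set in $(\ast)$, for every $N$ we have $S\subset\bigcup_{n\ge N}\{x:|\epsilon_1(x)\cdots\epsilon_n(x)|_i\le qn\}$, so $S$ is covered by dyadic intervals of diameter $\le 2^{-N}$ with
$$\cH^s_{2^{-N}}(S)\le\sum_{n\ge N}2^{nh(q)}(2^{-n})^s=\sum_{n\ge N}2^{n(h(q)-s)},$$
which tends to $0$ as $N\to\infty$ whenever $s>h(q)$. Hence $\cH^s(S)=0$ for every $s>h(q)$, giving $(\ast)$.

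The remaining step is to feed the four regimes into $(\ast)$ by counting the right digit. For the liminf set, $\varliminf_n f_n(x)=p$ forces both $\varliminf_n f_n\le p$ and $\varliminf_n f_n\ge p$. If $p<\tfrac12$, fix $\epsilon>0$ with $p+\epsilon\le\tfrac12$: then $\varliminf_n f_n\le p$ yields $f_n(x)<p+\epsilon$, i.e.\ $|\epsilon_1(x)\cdots\epsilon_n(x)|_0\le(p+\epsilon)n$, for infinitely many $n$, so the set lies in the family $(\ast)$ with $i=0$, $q=p+\epsilon$, and has dimension $\le h(p+\epsilon)$. If $p>\tfrac12$, fix $\epsilon>0$ with $1-p+\epsilon\le\tfrac12$: then $\varliminf_n f_n\ge p$ gives $f_n(x)\ge p-\epsilon$ for all large $n$, i.e.\ $|\epsilon_1(x)\cdots\epsilon_n(x)|_1\le(1-p+\epsilon)n$ for infinitely many $n$, so the dimension is $\le h(1-p+\epsilon)$. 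Letting $\epsilon\downarrow0$ and using continuity together with $h(1-p)=h(p)$ gives the bound $h(p)$ in both cases; the value $p=\tfrac12$ is trivial since $h(\tfrac12)=1$ bounds the dimension of any subset of $[0,1]$. The limsup set is treated identically, now starting from $\varlimsup_n f_n\le p$ (for $p<\tfrac12$) and $\varlimsup_n f_n\ge p$ (for $p>\tfrac12$), each of which again produces infinitely many $n$ with the minority digit count below the appropriate threshold.

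The one genuinely delicate point, and the reason for the case split, is the non-monotonicity of $h$ at $\tfrac12$: a cover that always counted $0$'s would only give the useless bound $h(\tfrac12)=1$ once $p>\tfrac12$, so one is forced to count the minority digit and invoke the symmetry $h(q)=h(1-q)$. Everything else is routine. As a shortcut, one could instead establish only the liminf bound and deduce the limsup bound from the isometry $x\mapsto 1-x$ of $[0,1]$, which interchanges the two sets while sending $p$ to $1-p$ (off the dimension-negligible set of dyadic rationals), again using $h(p)=h(1-p)$.
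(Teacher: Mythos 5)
Your proof is correct. It is, however, a genuinely different route from the paper's: the paper does not prove this lemma at all, but simply records it as a special case of Theorem 6.1 of the cited preprint [LLS19] on frequency sets of beta-expansions. Your argument is self-contained and elementary, and it is essentially the classical Besicovitch--Eggleston upper-bound scheme: the entropy estimate $\sum_{k\le qn}\binom{n}{k}\le 2^{nh(q)}$ for $q\le\tfrac12$ (your derivation via $1\ge\sum_{k\le qn}\binom{n}{k}q^k(1-q)^{n-k}\ge 2^{-nh(q)}\sum_{k\le qn}\binom{n}{k}$ is valid, since $t\mapsto q^t(1-q)^{n-t}$ is nonincreasing for $q\le\tfrac12$), the resulting cover of the ``infinitely often'' set by dyadic intervals of generation $n\ge N$, and the conclusion $\mathcal{H}^s(S)=0$ for $s>h(q)$. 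Your reduction of the four regimes to the single statement $(\ast)$ is also sound; in particular you correctly identified the one delicate point, namely that for $p>\tfrac12$ one must count the minority digit (the $1$'s) and invoke $h(q)=h(1-q)$, since counting $0$'s only yields the trivial bound $1$; and the cases $p=\tfrac12$, $p\in\{0,1\}$ are disposed of correctly by $h(\tfrac12)=1$ and continuity of $h$. What the paper's citation buys is brevity and generality (the theorem of [LLS19] covers all beta-expansions, not just base $2$); what your argument buys is independence from an external, unpublished reference, at the cost of roughly a page of standard covering estimates.
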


For any $p\in[0,1]$, we define the \textit{global frequency sets} in $\{0,1\}^\N$ by
$$G_p:=\Big\{w\in\{0,1\}^\N:\lim_{n\to\infty}\frac{|w_1\cdots w_n|_0}{n}=p\Big\},$$
$$\underline{G}_p:=\Big\{w\in\{0,1\}^\N:\varliminf_{n\to\infty}\frac{|w_1\cdots w_n|_0}{n}=p\Big\}$$
and
$$\overline{G}_p:=\Big\{w\in\{0,1\}^\N:\varlimsup_{n\to\infty}\frac{|w_1\cdots w_n|_0}{n}=p\Big\}.$$

By Theorem \ref{classical}, Lemma \ref{upper bound special of LLS} and Proposition \ref{dimension equality}, the following is obvious.

\begin{theorem}\label{global dimension} For any $p\in[0,1]$, we have
$$\dim_H (G_p,d_2)=\dim_H (\underline{G}_p,d_2)=\dim_H (\overline{G}_p,d_2)=\frac{-p\log p-(1-p)\log(1-p)}{\log2}.$$
\end{theorem}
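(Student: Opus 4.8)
The plan is to push the three symbolic frequency sets onto the interval through the projection $\pi_2$ and then read off the dimensions from the known Eggleston-type results. Write $h(p):=\frac{-p\log p-(1-p)\log(1-p)}{\log2}$. Since the existence of the limit forces the same value for both $\varliminf$ and $\varlimsup$, we have the inclusions $G_p\subset\underline{G}_p$ and $G_p\subset\overline{G}_p$, whence $\dim_H(G_p,d_2)\le\dim_H(\underline{G}_p,d_2)$ and $\dim_H(G_p,d_2)\le\dim_H(\overline{G}_p,d_2)$. It therefore suffices to prove the single lower bound $\dim_H(G_p,d_2)\ge h(p)$ together with the two upper bounds $\dim_H(\underline{G}_p,d_2)\le h(p)$ and $\dim_H(\overline{G}_p,d_2)\le h(p)$.

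The link between symbols and reals rests on the fact that every $x\in[0,1)$ that is not a dyadic rational has a unique binary expansion, namely its greedy expansion $\epsilon_1(x)\epsilon_2(x)\cdots$; for such $x$ the set $\{0,1\}^\N\cap\pi_2^{-1}(x)$ is the single sequence $\epsilon_1(x)\epsilon_2(x)\cdots$, whose digit-$0$ frequencies are exactly those appearing in Theorem \ref{classical} and Lemma \ref{upper bound special of LLS}. Let $D\subset[0,1)$ be the (countable) set of dyadic rationals. For the lower bound, if $x$ belongs to the Eggleston set $A_p:=\{x\in[0,1):\lim_{n\to\infty}|\epsilon_1(x)\cdots\epsilon_n(x)|_0/n=p\}$ and $x\notin D$, then its greedy expansion lies in $G_p$ and satisfies $\pi_2(\epsilon_1(x)\epsilon_2(x)\cdots)=x$; hence $A_p\setminus D\subset\pi_2(G_p)$. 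For the upper bounds, if $x=\pi_2(w)$ with $w\in\underline{G}_p$ and $x\notin D$, then $w$ must be the greedy expansion of $x$, so $x$ lies in the $\varliminf$-set $\underline{A}_p:=\{x\in[0,1):\varliminf_{n\to\infty}|\epsilon_1(x)\cdots\epsilon_n(x)|_0/n=p\}$; thus $\pi_2(\underline{G}_p)\subset\underline{A}_p\cup D$, and the analogous inclusion $\pi_2(\overline{G}_p)\subset\overline{A}_p\cup D$ holds for the $\varlimsup$-set.

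With these inclusions the conclusion is immediate. Because $D$ is countable it has Hausdorff dimension $0$, and Hausdorff dimension is monotone and stable under countable unions, so adjoining or deleting $D$ changes no dimension. Combining Proposition \ref{dimension equality} (to pass between $\dim_H(\cdot,d_2)$ and $\dim_H\pi_2(\cdot)$) with Theorem \ref{classical} yields $\dim_H(G_p,d_2)=\dim_H\pi_2(G_p)\ge\dim_H(A_p\setminus D)=h(p)$, while combining it with Lemma \ref{upper bound special of LLS} yields $\dim_H(\underline{G}_p,d_2)\le\dim_H(\underline{A}_p\cup D)\le h(p)$ and likewise for $\overline{G}_p$. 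Together with the monotonicity inclusions of the first step this forces all three dimensions to equal $h(p)$.

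I do not expect a real obstacle: the only delicate point is the treatment of the dyadic rationals, whose second (non-greedy) binary expansion could in principle place them in a symbolic frequency set not detected by the greedy expansion. This causes no harm because $D$ is countable and hence dimensionally negligible, so it may be discarded on the lower-bound side and absorbed on the upper-bound side.
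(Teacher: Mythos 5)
Your proposal is correct and takes essentially the same route as the paper, which simply cites Theorem \ref{classical}, Lemma \ref{upper bound special of LLS} and Proposition \ref{dimension equality} and declares the result obvious. The only thing you add is an explicit treatment of the dyadic rationals (the countable set $D$), which is exactly the detail the paper leaves implicit.
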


To prove Theorem \ref{main}, we need the next lemma, which will be proved later. For any integer $m\ge3$, we recall
$$\Lambda_m:=\Big\{w\in\{0,1\}^\N: w \text{ does not contain } 0^m \text{ or } 1^m\Big\}$$
and define
$$\Lambda_{m,p}:=\Lambda_m\cap G_p\quad\text{for }p\in[0,1].$$

\begin{lemma}\label{lowerbound}
Let $p\in(0,1)$ and integer $m\ge3$ be large enough such that $\frac{1}{m}<p<1-\frac{1}{m}$. Let $f_m:(0,1)\to\R$ be defined by
$$f_m(x):=\frac{x-x^m}{1-x^m-(1-x)^m}\quad\text{for }x\in(0,1).$$
Then there exists $q_m\in(0,1)$ such that $f_m(q_m)=p$ and
$$\dim_H(\Lambda_{m,p},d_2)\ge\frac{-(mp-1)\log q_m-(m-mp-1)\log(1-q_m)}{(m-1)\log2}.$$
Moreover, $q_m\to p$ as $m\to\infty$.
\end{lemma}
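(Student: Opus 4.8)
The plan is to bound $\dim_H(\Lambda_{m,p},d_2)$ from below through the metric Billingsley lemma (Proposition \ref{Billingsley in metric}), using as auxiliary measure the Bernoulli-type measure $\mu_q$ of Section 4 for a carefully chosen parameter $q$. By Remark \ref{measureformula} the cylinder masses of $\mu_q$ are controlled by the ``free choice'' counts $N^m_0,N^m_1$, and Proposition \ref{lower bound of changable} is precisely what converts the prescribed frequency $p$ into lower bounds on these counts; the parameter must be tuned so that $\mu_q$ actually charges $\Lambda_{m,p}$, which is the role of the equation $f_m(q)=p$. First I would check that such a $q_m$ exists. For $x\in(0,1)$ and $m\ge2$ one has $x^m<x$ and $(1-x)^m<1-x$, so both the numerator $x-x^m$ and the denominator $1-x^m-(1-x)^m$ are strictly positive and $f_m$ is continuous and positive on $(0,1)$. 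A short expansion at the endpoints gives $\lim_{x\to0^+}f_m(x)=1/m$ and $\lim_{x\to1^-}f_m(x)=1-1/m$, so $f_m$ extends continuously to $[0,1]$ with these boundary values; since $1/m<p<1-1/m$ by hypothesis, the intermediate value theorem yields $q_m\in(0,1)$ with $f_m(q_m)=p$.

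The crux is the identity $\lambda_q([0])=f_m(q)$, i.e.\ that $f_m(q)$ is the $\lambda_q$-almost sure frequency of the digit $0$. I would prove this by identifying $\lambda_q$ with the shift-invariant Markov measure of the finite run-length Markov chain on states $0_1,\dots,0_{m-1},1_1,\dots,1_{m-1}$ (recording the current symbol and the length of its current run), whose transitions repeat the current symbol with probability $q$ (resp.\ $1-q$) and switch with the complementary probability, except that a run of length $m-1$ forces a switch. This chain is irreducible, hence has a unique stationary ergodic distribution; the associated shift-invariant measure is supported on $\Lambda_m$ and equivalent to $\mu_q$ (a direct comparison of cylinder masses, using that the two base weights coincide, shows the Radon--Nikodym derivative depends only on $w_1$ and is bounded away from $0$ and $\infty$), so by the uniqueness in Theorem \ref{lambda p} it equals $\lambda_q$. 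Solving the balance equations gives stationary weights $q^{j-1}\gamma$ on $0_j$ and $(1-q)^{j-1}\gamma$ on $1_j$ with equal base weights on $0_1$ and $1_1$; summing and normalizing yields, after simplification, $\lambda_q([0])=\frac{q-q^m}{1-q^m-(1-q)^m}=f_m(q)$. By the Birkhoff Ergodic Theorem the $\lambda_q$-a.e.\ frequency of $0$ equals $\lambda_q([0])=f_m(q)$; taking $q=q_m$ and using $\lambda_{q_m}\sim\mu_{q_m}$, I conclude $\mu_{q_m}(\Lambda_{m,p})=1>0$.

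With positivity secured, the dimension estimate is uniform over all of $\Lambda_{m,p}$. For $w\in\Lambda_m$ the closed ball $B(w,2^{-n})$ meets $\Lambda_m$ in the cylinder $[w|_n]$, so by Remark \ref{measureformula} $\mu_{q_m}(B(w,2^{-n}))=q_m^{N^m_0(w|_n)}(1-q_m)^{N^m_1(w|_n)}$, whence
$$\underline{\dim}_{loc}\mu_{q_m}(w)=\varliminf_{n\to\infty}\frac{-N^m_0(w|_n)\log q_m-N^m_1(w|_n)\log(1-q_m)}{n\log2}.$$
Since the coefficients $-\log q_m$ and $-\log(1-q_m)$ are positive, I may substitute the bounds $N^m_0(w|_n)\ge\frac{m\,|w_1\cdots w_n|_0-n}{m-1}$ and $N^m_1(w|_n)\ge\frac{m\,|w_1\cdots w_n|_1-n}{m-1}$ from Proposition \ref{lower bound of changable}; using $|w_1\cdots w_n|_0/n\to p$ and $|w_1\cdots w_n|_1/n\to1-p$ for $w\in\Lambda_{m,p}$, this gives $\underline{\dim}_{loc}\mu_{q_m}(w)\ge\frac{-(mp-1)\log q_m-(m-mp-1)\log(1-q_m)}{(m-1)\log2}$ for every such $w$. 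Proposition \ref{Billingsley in metric} with $\mu=\mu_{q_m}$ and $E=\Lambda_{m,p}$ then delivers the asserted lower bound. For the last assertion, $x^m,(1-x)^m\to0$ gives $f_m(x)\to x$ pointwise; since $q_m\to0$ would force $f_m(q_m)\to0$ and $q_m\to1$ would force $f_m(q_m)\to1$, the $q_m$ remain in a compact subinterval of $(0,1)$ on which $f_m\to\mathrm{id}$ uniformly, whence $q_m\to p$.

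I expect the main obstacle to be the identity $\lambda_q([0])=f_m(q)$. Everything else is either the elementary calculus of $f_m$ or a direct application of Propositions \ref{lower bound of changable} and \ref{Billingsley in metric}, whereas pinning down the almost-sure digit frequency requires genuinely understanding the invariant measure $\lambda_q$ through the run-length Markov structure and, crucially, verifying that the stationary measure produced concretely there is the same abstract measure $\lambda_q$ furnished by Theorem \ref{lambda p}.
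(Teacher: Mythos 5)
Your proposal is correct in substance and reaches the stated bound, but it takes a genuinely different route at the one step you rightly identify as the crux, namely $\lambda_q[0]=f_m(q)$. The paper proves this identity (its Lemma \ref{lambda_p[0]}) by direct computation from the definition $\lambda_q(B)=\lim_n\frac1n\sum_{k=0}^{n-1}\mu_q\sigma_m^{-k}(B)$: it sets $a_k=\mu_q\sigma_m^{-k}[0]$, $b_k=\mu_q\sigma_m^{-k}[1]$, $c_k=\mu_q\sigma_m^{-k}[01]$, $d_k=\mu_q\sigma_m^{-k}[10]$, derives the exact recursion $a_k=d_{k-1}+qd_{k-2}+\cdots+q^{m-2}d_{k-m+1}$ (and its analogue for $c_k$) by repeatedly splitting cylinders, passes to Ces\`aro limits, and solves the linear system $a+b=1$, $c=d$, $(1-q)a+q^{m-1}d=c$, $qb+(1-q)^{m-1}c=d$. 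You instead exhibit the invariant measure concretely as the stationary run-length Markov chain on the $2(m-1)$ states $0_j,1_j$, solve the balance equations (your stationary weights $\pi(0_j)=q^{j-1}\gamma$, $\pi(1_j)=(1-q)^{j-1}\gamma$ with $\pi(0_1)=\pi(1_1)$ are correct, and they do yield $\sum_j\pi(0_j)=f_m(q)$ after normalization), and invoke the uniqueness clause of Theorem \ref{lambda p} to identify this measure with $\lambda_q$. Your route is more conceptual and replaces the paper's longest computation; in fact the identification with $\lambda_q$ is almost optional for you, since your Markov measure is itself invariant, ergodic and equivalent to $\mu_q$, so Birkhoff applied to it directly gives $\mu_q(\Lambda_{m,p})=1$. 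What it buys the paper to avoid this route is that no equivalence with $\mu_q$ needs to be verified there: that verification is exactly where your work is concentrated. The remaining parts (intermediate value theorem for $q_m$, the local-dimension estimate via Remark \ref{measureformula} and Proposition \ref{lower bound of changable}, and Proposition \ref{Billingsley in metric}; you should also record, as the paper does, that $\Lambda_{m,p}$ is Borel) coincide with the paper's argument.

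Two details in your sketch need repair, though neither is fatal. First, the Radon--Nikodym derivative of your Markov measure with respect to $\mu_q$ does not depend only on $w_1$: comparing cylinder masses, the factor covering the initial run of $w$ together with the entry into the second run is $\gamma q^{r_1-1}$ for your measure, versus $q^{r_1}(1-q)$ if $r_1<m-1$ and $q^{m-1}$ if $r_1=m-1$ for $\mu_q$ (here $r_1$ is the first run length), so the ratio depends on whether the first run is full. What is true, and is all you need, is that these ratios lie between two positive constants uniformly over all cylinders; a monotone-class argument, exactly as in the paper's Lemma \ref{strong quasi-invariant}, then upgrades this to equivalence on all of $\cB(\Lambda_m)$. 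Second, your compactness argument for $q_m\to p$ rests on the unproved claim that $x_m\to0$ forces $f_m(x_m)\to0$; this is a two-variable limit and needs a short case analysis (for instance $(m-1)x_m\le1$ versus $(m-1)x_m>1$, bounding $1-(1-x_m)^{m-1}$ from below in each case). The paper's cleaner route is the global estimate $|f_m(x)-x|\le\frac1m$ on $(0,1)$, which gives $|q_m-p|\le\frac1m$ at once.
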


\begin{proof}[Proof of Theorem \ref{main}] First we prove (2). Let $p\in[0,1]$. By Proposition \ref{dimension equality}, it suffices to prove
$$\begin{aligned}
\dim_H(\Gamma_p,d_2)=\dim_H(\underline{\Gamma}_p,d_2)=\dim_H(\overline{\Gamma}_p,d_2)&\\
=\dim_H(\Lambda_p,d_2)=\dim_H(\underline{\Lambda}_p,d_2)=\dim_H(\overline{\Lambda}_p,d_2)&=\frac{-p\log p-(1-p)\log(1-p)}{\log 2}.
\end{aligned}$$
Since it is straightforward to check $\Gamma\subset\Lambda$, we have
$$\Gamma_p\subset\Lambda_p\subset G_p,\quad\Gamma_p\subset\underline{\Gamma}_p\subset\underline{\Lambda}_p\subset\underline{G}_p\quad\text{and}\quad\Gamma_p\subset\overline{\Gamma}_p\subset\overline{\Lambda}_p\subset\overline{G}_p.$$
By Theorem \ref{global dimension}, we only need to prove
\begin{eqnarray}\label{Gamma_p lowerbound}
\dim_H(\Gamma_p,d_2)\ge\frac{-p\log p-(1-p)\log(1-p)}{\log2}.
\end{eqnarray}
If $p=0$ or $1$, this follows immediately from $0\log0:=0$ and $1\log1=0$. So we only need to consider $0<p<1$ in the following. For any integer $m\ge3$, we define
$$\Theta_{m,p}:=\Big\{w\in G_p:w_1\cdots w_{2m}=1^{2m},w_{km+1}\cdots w_{km+m}\notin\{0^m,1^m\}\text{ for all }k\ge2\Big\}$$
and
$$\Xi_{m,p}:=\Big\{w\in G_p:w_{km+1}\cdots w_{km+m}\notin\{0^m,1^m\}\text{ for all }k\ge0\Big\}.$$
Then
\begin{eqnarray}\label{string}
\dim_H(\Gamma_p,d_2)\overset{(\star)}{\ge}\dim_H(\Theta_{m,p},d_2)\overset{(\star\star)}{\ge}\dim_H(\Xi_{m,p},d_2)\overset{(\star\star\star)}{\ge}\dim_H(\Lambda_{m,p},d_2)
\end{eqnarray}
where $(\star)$ follows from $\Gamma_p\supset\Theta_{m,p}$, $(\star\star\star)$ follows from $\Xi_{m,p}\supset\Lambda_{m,p}$ and $(\star\star)$ follows from Lemma \ref{Lipschitz}, $\sigma^{2m}(\Theta_{m,p})=\Xi_{m,p}$ and the fact that $\sigma^{2m}$ is Lipschitz continuous (since $d_2(\sigma^{2m}(w),\sigma^{2m}(v))\le2^{2m}d_2(w,v)$ for all $w,v\in\{0,1\}^\N$). By (\ref{string}) and Lemma \ref{lowerbound}, for $m$ large enough, there exists $q_m\in(0,1)$ such that $q_m\to p$ (as $m\to\infty$) and
$$\dim_H(\Gamma_p,d_2)\ge\frac{-(mp-1)\log q_m-(m-mp-1)\log(1-q_m)}{(m-1)\log2}.$$
Let $m\to\infty$, we get (\ref{Gamma_p lowerbound}).

Finally we deduce (1) from (2). In fact, since (2) implies $$\dim_H\pi_2(\Gamma_{\frac{1}{2}})=\dim_H(\Gamma_{\frac{1}{2}},d_2)=1,$$
it follows from $\Gamma_{\frac{1}{2}}\subset\Gamma\subset\Lambda\subset\{0,1\}^\N$ that
$$\dim_H\pi_2(\Gamma)=\dim_H(\Gamma,d_2)=\dim_H\pi_2(\Lambda)=\dim_H(\Lambda,d_2)=1.$$
\end{proof}

\begin{proof}[Proof of Lemma \ref{lowerbound}]
Since $f_m$ is continuous on $(0,1)$, $\lim_{x\to0^+}f_m(x)=\frac{1}{m}$, $\lim_{x\to1^-}f_m(x)=1-\frac{1}{m}$ and $\frac{1}{m}<p<1-\frac{1}{m}$, there exists $q_m\in(0,1)$ such that $f_m(q_m)=p$.

\item(1) Prove $q_m\to p$ as $m\to\infty$. Notice that
$$|q_m-p|=|q_m-f_m(q_m)|=\Big|\frac{q_m^m(1-q_m)-q_m(1-q_m)^m}{1-q_m^m-(1-q_m)^m}\Big|.$$
Let
$$g_m(x):=\frac{x^m(1-x)-x(1-x)^m}{1-x^m-(1-x)^m}\quad\text{for }x\in(0,1).$$
Then
$$|q_m-p|=|g_m(q_m)|\le\sup_{x\in(0,1)}|g_m(x)|.$$
In order to prove $q_m\to p$, it suffices to check $|g_m(x)|\le\frac{1}{m}$ for all $x\in(0,1)$. That is,
$$m\cdot|x^m(1-x)-x(1-x)^m|\le1-x^m-(1-x)^m\quad\text{for all }x\in(0,1).$$
\begin{itemize}
\item[\textcircled{\footnotesize{$1$}}] When $x\in(0,\frac{1}{2}]$, we get $x^m(1-x)-x(1-x)^m\le0$. It suffices to prove $(m-mx-1)x^m+1-(mx+1)(1-x)^m\ge0$. Since $m-mx-1>0$, we only need to prove $h_m(x):=(mx+1)(1-x)^m\le1$ for all $x\in[0,\frac{1}{2}]$. This follows from $h_m(0)=1$ and $h_m'(x)=-m(m+1)x(1-x)^{m-1}\le0$ for all $x\in[0,\frac{1}{2}]$.
\item[\textcircled{\footnotesize{$2$}}] When $x\in(\frac{1}{2},1)$, we get $x^m(1-x)-x(1-x)^m\ge0$. It suffices to prove $(mx-1)(1-x)^m+1-(1+m-mx)x^m\ge0$. Since $mx-1>0$, we only need to prove $h_m(x):=(1+m-mx)x^m\le1$ for all $x\in[\frac{1}{2},1]$. This follows from $h_m(1)=1$ and $h_m'(x)=m(m+1)(1-x)x^{m-1}\ge0$ for all $x\in[\frac{1}{2},1]$.
\end{itemize}
\item(2) We apply Proposition \ref{Billingsley in metric} to get the lower bound of $\dim_H(\Lambda_{m,p},d_2)$. Let $\mu_{q_m}$ be the $(q_m,1-q_m)$ Bernoulli-type measure on $(\Lambda_m,\cB(\Lambda_m))$ defined in Section 4.
\begin{itemize}
\item[\textcircled{\footnotesize{$1$}}] Prove that $\Lambda_{m,p}:=\Lambda_m\cap G_p$ is a Borel set in $(\Lambda_m,d_2)$.
    \newline In fact, it suffices to prove that $G_p$ is a Borel set in $(\{0,1\}^\N,d_2)$. Since
    $$\frac{|w_1\cdots w_n|_0}{n}=\frac{1}{n}\sum_{k=0}^{n-1}\mathbbm{1}_{[0]}(\sigma^k(w))$$
    where $\sigma$ is a Borel map and $\mathbbm{1}_{[0]}$ is a Borel function on $(\{0,1\}^\N,d_2)$, we know that $\underline{G}_p$ and $\overline{G}_p$ are Borel sets, which implies that $G_p=\underline{G}_p\cap\overline{G}_p$ is a Borel set.
\item[\textcircled{\footnotesize{$2$}}] Prove $\mu_{q_m}(\Lambda_{m,p})=1$.
\newline Let $\lambda_{q_m}$ be the measure defined in Theorem \ref{lambda p} such that $(\Lambda_m,\cB(\Lambda_m),\lambda_{q_m},\sigma_m)$ is ergodic. It follows from the Birkhoff Ergodic Theorem that
$$\lim_{n\to\infty}\frac{1}{n}\sum_{k=0}^{n-1}\mathbbm{1}_{[0]}(\sigma_m^{k}w)=\int\mathbbm{1}_{[0]}d\lambda_{q_m}=\lambda_{q_m}[0]\xlongequal[\text{Lemma \ref{lambda_p[0]}}]{\text{by}}\frac{q_m-q_m^m}{1-q_m^m-(1-q_m)^m}=f_m(q_m)=p$$
for $\lambda_{q_m}$-almost every $w\in\Lambda_m$. By $\frac{|w_1\cdots w_n|_0}{n}=\frac{1}{n}\sum_{k=0}^{n-1}\mathbbm{1}_{[0]}(\sigma_m^{k}w)$, we get
$$\lim_{n\to\infty}\frac{|w_1\cdots w_n|_0}{n}=p\quad\text{for }\lambda_{q_m}\text{-almost every }w\in\Lambda_m,$$
which implies $\lambda_{q_m}(\Lambda_{m,p})=1$. It follows from $\lambda_{q_m}\sim\mu_{q_m}$ that $\mu_{q_m}(\Lambda_{m,p})=1$.
\item[\textcircled{\footnotesize{$3$}}] For any $w\in\Lambda_{m,p}$, we have
\begin{small}
$$\begin{aligned}
\underline{\dim}_{loc}\mu_{q_m}(w)&=\varliminf_{r\to\infty}\frac{\log\mu_{q_m}(B(w,r))}{\log r}\\
&\overset{(\star)}{\ge}\varliminf_{n\to\infty}\frac{\log\mu_{q_m}[w_1\cdots w_n]}{\log2^{-n}}\\
&=\varliminf_{n\to\infty}\frac{-\log q_m^{N^m_0(w_1\cdots w_n)}(1-q_m)^{N^m_1(w_1\cdots w_n)}}{n\log2}\\
&\ge\frac{\varliminf_{n\to\infty}\frac{N^m_0(w_1\cdots w_n)}{n}(-\log q_m)+\varliminf_{n\to\infty}\frac{N^m_1(w_1\cdots w_n)}{n}(-\log(1-q_m))}{\log2}\\
&\overset{(\star\star)}{\ge}\frac{\varliminf_{n\to\infty}\big(\frac{m\cdot|w_1\cdots w_n|_0}{(m-1)n}-\frac{1}{m-1}\big)(-\log q_m)+\varliminf_{n\to\infty}\big(\frac{m\cdot|w_1\cdots w_n|_1}{(m-1)n}-\frac{1}{m-1}\big)(-\log(1-q_m))}{\log2}\\
&\overset{(\star\star\star)}{=}\frac{-(mp-1)\log q_m-(m-mp-1)\log(1-q_m)}{(m-1)\log2}
\end{aligned}$$
\end{small}where $(\star\star\star)$ follows from $w\in\Lambda_{m,p}$, $(\star\star)$ follows from Proposition \ref{lower bound of changable} and $(\star)$ can be proved as follows. For any $r\in(0,1)$, there exists $n=n(r)\in\N$ such that $\frac{1}{2^n}\le r<\frac{1}{2^{n-1}}$. Then by $B(w,r)=[w_1\cdots w_n]$ and $\log\mu_{q_m}[w_1\cdots w_n]<0$, we get $\frac{\log\mu_{q_m}(B(w,r))}{\log r}\ge\frac{\log\mu_{q_m}[w_1\cdots w_n]}{\log2^{-n}}$. (In fact, $(\star)$ can take ``$=$''.)
\end{itemize}
Thus the lower bound of $\dim_H(\Lambda_{m,p},d_2)$ follows from \textcircled{\footnotesize{$1$}}, \textcircled{\footnotesize{$2$}}, \textcircled{\footnotesize{$3$}} and Proposition \ref{Billingsley in metric}.
\end{proof}

To end this paper, we prove the following lemma, which has been used in the above proof.

\begin{lemma}\label{lambda_p[0]} Let $m\ge3$ be an integer, $p\in(0,1)$ and $\lambda_p$ be the measure on $(\Lambda_m,\cB(\Lambda_m))$ defined in Theorem \ref{lambda p}. Then
$$\lambda_p[0]=\frac{p-p^m}{1-p^m-(1-p)^m}.$$
\end{lemma}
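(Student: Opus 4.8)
The plan is to evaluate $\lambda_p[0]$ directly from its definition $\lambda_p[0]=\lim_{n\to\infty}\frac1n\sum_{k=0}^{n-1}\mu_p(\sigma_m^{-k}[0])$, exploiting the Markovian (run-length) structure that the formula $\mu_p[w]=p^{N^m_0(w)}(1-p)^{N^m_1(w)}$ of Remark \ref{measureformula} imposes. Since $\sigma_m^{-k}[0]=\{w:w_{k+1}=0\}$, one has $\lambda_p[0]=\lim_{n}\frac1n\sum_{k=1}^{n}r_k$, the Ces\`aro average of the probabilities $r_k:=\mu_p\{w:w_k=0\}$. First I would refine $r_k$ according to the length of the maximal block of $0$'s ending at position $k$: for $1\le j\le m-1$ set $\alpha_k^{(j)}:=\mu_p\{w:w_{k-j}=1,\ w_{k-j+1}=\cdots=w_k=0\}$, and analogously $\beta_k^{(j)}$ for blocks of $1$'s. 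Then $r_k=\sum_{j=1}^{m-1}\alpha_k^{(j)}$ for all large $k$ (the finitely many small $k$, where a block may fill the whole prefix, are irrelevant for the Ces\`aro limit).

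For the \emph{recursions}, I would use Remark \ref{measureformula} to see that appending a $0$ to a legal word of length $k-1$ contributes the factor $p$ when the word does not end in $1^{m-1}$ (a free digit, i.e.\ the appended position lies in $\cN^m_0$) and the factor $1$ when it ends in $1^{m-1}$ (a forced digit). This yields, for $k$ large, $\alpha_k^{(j)}=p\,\alpha_{k-1}^{(j-1)}$ for $2\le j\le m-1$ and $\alpha_k^{(1)}=p\sum_{j=1}^{m-2}\beta_{k-1}^{(j)}+\beta_{k-1}^{(m-1)}$, together with the symmetric relations for $\beta_k^{(j)}$ obtained by exchanging $p$ and $1-p$.

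For the \emph{passage to the Ces\`aro limit}, I would observe that each defining event is a shift of a fixed cylinder, namely $\alpha_k^{(j)}=\mu_p(\sigma_m^{-(k-j-1)}[10^j])$ for $k\ge j+1$, the finitely many boundary terms being harmless. Hence, by the very definition of $\lambda_p$, the average $\frac1n\sum_{k=1}^n\alpha_k^{(j)}$ converges to $A_j:=\lambda_p[10^j]$, and similarly $\frac1n\sum\beta_k^{(j)}\to B_j:=\lambda_p[01^j]$. Averaging the recursions gives the stationary system $A_j=p^{j-1}A_1$, $B_j=(1-p)^{j-1}B_1$ and $A_1=p\sum_{j=1}^{m-2}B_j+B_{m-1}$; substituting the geometric form collapses the last equation to $A_1=B_1$.

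Finally, since $r_k=\sum_j\alpha_k^{(j)}$ has only $m-1$ terms, $\lambda_p[0]=\sum_{j=1}^{m-1}A_j=A_1\frac{1-p^{m-1}}{1-p}$ and likewise $\lambda_p[1]=B_1\frac{1-(1-p)^{m-1}}{p}$. Using $A_1=B_1$ and the normalization $\lambda_p[0]+\lambda_p[1]=1$ to eliminate $A_1$, and clearing the denominator $p(1-p)$, produces exactly $\lambda_p[0]=\frac{p-p^m}{1-p^m-(1-p)^m}$. I expect the main obstacle to be the careful bookkeeping in the recursion step, that is, correctly reading off the free versus forced digits from the $N^m_0$-structure, together with the clean verification that the boundary terms vanish so that the $\mu_p$-Ces\`aro averages of the run-length events genuinely converge to the $\lambda_p$-masses of $[10^j]$ and $[01^j]$; the remaining linear algebra is routine.
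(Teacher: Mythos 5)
Your proposal is correct, and its core is the same computation as the paper's: both exploit the defining Ces\`aro-limit formula for $\lambda_p$, derive recursions for the $\mu_p$-masses of shifted cylinders by peeling off the maximal run of equal digits (an appended digit contributes a factor $p$ or $1-p$ when it is free and a factor $1$ when it is forced), average, and solve a small linear system. The organization, however, genuinely differs. The paper works only with the four cylinders $[0],[1],[01],[10]$: it telescopes the run-length peeling into the two closed recursions $a_k=d_{k-1}+pd_{k-2}+\cdots+p^{m-2}d_{k-m+1}$ and the analogue for $c_k=\mu_p\sigma_m^{-k}[01]$, and it obtains the key identity $\lambda_p[01]=\lambda_p[10]$ (your $A_1=B_1$) for free from the $\sigma_m$-invariance of $\lambda_p$, via $\lambda_p[00]+\lambda_p[01]=\lambda_p[0]=\lambda_p(\sigma_m^{-1}[0])=\lambda_p[00]+\lambda_p[10]$. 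You instead keep all $2(m-1)$ run-length cylinders $[10^j]$, $[01^j]$ as unknowns $A_j,B_j$, obtain the geometric relations $A_j=p^{j-1}A_1$ and $B_j=(1-p)^{j-1}B_1$ from one-step recursions, and derive $A_1=B_1$ purely from the averaged switch relation $A_1=p\sum_{j=1}^{m-2}B_j+B_{m-1}$, never invoking the invariance of $\lambda_p$ but only the existence of the Ces\`aro limits from Theorem \ref{lambda p}. What each approach buys: your system is larger, but every relation in it is a transparent one-step identity and you use less of Theorem \ref{lambda p}; the paper's system has only four unknowns and shortcuts one equation by invariance, at the cost of two longer telescoping computations. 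Your handling of the technical points is sound --- the decomposition $r_k=\sum_{j=1}^{m-1}\alpha_k^{(j)}$ is exact for $k\ge m$, the index shifts in the Ces\`aro averages are indeed harmless, and the final elimination does reproduce $\lambda_p[0]=\frac{p-p^m}{1-p^m-(1-p)^m}$.
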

\begin{proof} By the definition of $\lambda_p$, we know
$$\lambda_p[0]=\lim_{n\to\infty}\frac{1}{n}\sum_{k=0}^{n-1}\mu_p\sigma_m^{-k}[0].$$
For any integer $k\ge0$, let
$$a_k:=\mu_p\sigma_m^{-k}[0]=\sum_{u_1\cdots u_k0\in\Lambda^*_m}\mu_p[u_1\cdots u_k0],\quad b_k:=\mu_p\sigma_m^{-k}[1]=\sum_{u_1\cdots u_k1\in\Lambda^*_m}\mu_p[u_1\cdots u_k1],$$
$$c_k:=\mu_p\sigma_m^{-k}[01]=\sum_{u_1\cdots u_k01\in\Lambda^*_m}\mu_p[u_1\cdots u_k01],\quad d_k:=\mu_p\sigma_m^{-k}[10]=\sum_{u_1\cdots u_k10\in\Lambda^*_m}\mu_p[u_1\cdots u_k10].$$
By Theorem \ref{lambda p}, the following limits exist:
$$a:=\lim_{n\to\infty}\frac{1}{n}\sum_{k=0}^{n-1}a_k=\lambda_p[0],\quad b:=\lim_{n\to\infty}\frac{1}{n}\sum_{k=0}^{n-1}b_k=\lambda_p[1],$$
$$c:=\lim_{n\to\infty}\frac{1}{n}\sum_{k=0}^{n-1}c_k=\lambda_p[01],\quad
d:=\lim_{n\to\infty}\frac{1}{n}\sum_{k=0}^{n-1}d_k=\lambda_p[10].$$
(1) We have $a+b=1$ since $\lambda_p[0]+\lambda_p[1]=\lambda_p(\Lambda_m)$.
\newline(2) We have $c=d$ since $\lambda_p[00]+\lambda_p[01]=\lambda_p[0]=\lambda_p\sigma_m^{-1}[0]=\lambda_p[00]+\lambda_p[10]$.
\newline(3) Prove $(1-p)a+p^{m-1}d=c$ and $pb+(1-p)^{m-1}c=d$.
\newline\textcircled{\footnotesize{$1$}} For $k\ge m$, we have
$$a_k=d_{k-1}+pd_{k-2}+\cdots+p^{m-3}d_{k-m+2}+p^{m-2}d_{k-m+1},$$
since
\begin{small}
$$\begin{aligned}
&\indent\sum_{u_1\cdots u_k0\in\Lambda^*_m}\mu_p[u_1\cdots u_k0]\\
&=\sum_{u_1\cdots u_{k-1}10\in\Lambda^*_m}\mu_p[u_1\cdots u_{k-1}10]+\sum_{u_1\cdots u_{k-1}00\in\Lambda^*_m}\mu_p[u_1\cdots u_{k-1}00]\\
&=d_{k-1}+\sum_{u_1\cdots u_{k-2}100\in\Lambda^*_m}\mu_p[u_1\cdots u_{k-2}100]+\sum_{u_1\cdots u_{k-2}000\in\Lambda^*_m}\mu_p[u_1\cdots u_{k-2}000]\\
&\overset{(\star)}{=}d_{k-1}+\sum_{u_1\cdots u_{k-2}10\in\Lambda^*_m}p\mu_p[u_1\cdots u_{k-2}10]+\sum_{u_1\cdots u_{k-2}000\in\Lambda^*_m}\mu_p[u_1\cdots u_{k-2}000]\\
&=d_{k-1}+pd_{k-2}+\sum_{u_1\cdots u_{k-3}1000\in\Lambda^*_m}\mu_p[u_1\cdots u_{k-3}1000]+\sum_{u_1\cdots u_{k-3}0000\in\Lambda^*_m}\mu_p[u_1\cdots u_{k-3}0000]\\
&\overset{(\star\star)}{=}d_{k-1}+pd_{k-2}+\sum_{u_1\cdots u_{k-3}10\in\Lambda^*_m}p^2\mu_p[u_1\cdots u_{k-3}10]+\sum_{u_1\cdots u_{k-3}0^4\in\Lambda^*_m}\mu_p[u_1\cdots u_{k-3}0^4]\\
&=\cdots\\
&=d_{k-1}+pd_{k-2}+\cdots+p^{m-3}d_{k-m+2}+\sum_{u_1\cdots u_{k-m+2}0^{m-1}\in\Lambda^*_m}\mu_p[u_1\cdots u_{k-m+2}0^{m-1}]\\
&=d_{k-1}+pd_{k-2}+\cdots+p^{m-3}d_{k-m+2}+\sum_{u_1\cdots u_{k-m+1}10^{m-1}\in\Lambda^*_m}\mu_p[u_1\cdots u_{k-m+1}10^{m-1}]\\
&\overset{(\star\star\star)}{=}d_{k-1}+pd_{k-2}+\cdots+p^{m-3}d_{k-m+2}+\sum_{u_1\cdots u_{k-m+1}10\in\Lambda^*_m}p^{m-2}\mu_p[u_1\cdots u_{k-m+1}10]\\
&=d_{k-1}+pd_{k-2}+\cdots+p^{m-3}d_{k-m+2}+p^{m-2}d_{k-m+1},
\end{aligned}$$
\end{small}
where $(\star)$, $(\star\star)$ and $(\star\star\star)$ follow from
$$\begin{aligned}
&u_1\cdots u_{k-2}100\in\Lambda^*_m\Leftrightarrow u_1\cdots u_{k-2}10\in\Lambda^*_m\\
&\Rightarrow u_1\cdots u_{k-2}101\in\Lambda^*_m,
\end{aligned}$$
$$\begin{aligned}
&u_1\cdots u_{k-3}1000\in\Lambda^*_m\Leftrightarrow u_1\cdots u_{k-3}10\in\Lambda^*_m\\
&\Rightarrow u_1\cdots u_{k-3}101, u_1\cdots u_{k-3}1001\in\Lambda^*_m
\end{aligned}$$
and
$$\begin{aligned}
&u_1\cdots u_{k-m+1}10^{m-1}\in\Lambda^*_m\Leftrightarrow u_1\cdots u_{k-m+1}10\in\Lambda^*_m\\
&\Rightarrow u_1\cdots u_{k-m+1}101, u_1\cdots u_{k-m+1}1001, \cdots, u_1\cdots u_{k-m+1}10^{m-2}1\in\Lambda^*_m
\end{aligned}$$
respectively, recalling the definition of $\mu_p$.
\newline\textcircled{\footnotesize{$2$}} For $k\ge m$, we have
$$c_k=(1-p)d_{k-1}+(1-p)pd_{k-2}+\cdots+(1-p)p^{m-3}d_{k-m+2}+p^{m-2}d_{k-m+1},$$
since
\begin{small}
$$\begin{aligned}
&\indent\sum_{u_1\cdots u_k01\in\Lambda^*_m}\mu_p[u_1\cdots u_k01]\\
&=\sum_{u_1\cdots u_{k-1}101\in\Lambda^*_m}\mu_p[u_1\cdots u_{k-1}101]+\sum_{u_1\cdots u_{k-1}001\in\Lambda^*_m}\mu_p[u_1\cdots u_{k-1}001]\\
&\overset{(\star)}{=}\sum_{u_1\cdots u_{k-1}10\in\Lambda^*_m}(1-p)\mu_p[u_1\cdots u_{k-1}10]+\sum_{u_1\cdots u_{k-1}001\in\Lambda^*_m}\mu_p[u_1\cdots u_{k-1}001]\\
&=(1-p)d_{k-1}+\sum_{u_1\cdots u_{k-2}1001\in\Lambda^*_m}\mu_p[u_1\cdots u_{k-2}1001]+\sum_{u_1\cdots u_{k-2}0001\in\Lambda^*_m}\mu_p[u_1\cdots u_{k-2}0001]\\
&\overset{(\star\star)}{=}(1-p)d_{k-1}+\sum_{u_1\cdots u_{k-2}10\in\Lambda^*_m}p(1-p)\mu_p[u_1\cdots u_{k-2}10]+\sum_{u_1\cdots u_{k-2}0^31\in\Lambda^*_m}\mu_p[u_1\cdots u_{k-2}0^31]\\
&=(1-p)d_{k-1}+p(1-p)d_{k-2}+\sum_{u_1\cdots u_{k-3}10^31\in\Lambda^*_m}\mu_p[u_1\cdots u_{k-3}10^31]+\sum_{u_1\cdots u_{k-3}0^41\in\Lambda^*_m}\mu_p[u_1\cdots u_{k-3}0^41]\\
&=\cdots\\
&=(1-p)d_{k-1}+(1-p)pd_{k-2}+\cdots+(1-p)p^{m-3}d_{k-m+2}+\sum_{u_1\cdots u_{k-m+2}0^{m-1}1\in\Lambda^*_m}\mu_p[u_1\cdots u_{k-m+2}0^{m-1}1]\\
&=(1-p)d_{k-1}+(1-p)pd_{k-2}+\cdots+(1-p)p^{m-3}d_{k-m+2}+\sum_{u_1\cdots u_{k-m+1}10^{m-1}1\in\Lambda^*_m}\mu_p[u_1\cdots u_{k-m+1}10^{m-1}1]\\
&\overset{(\star\star\star)}{=}(1-p)d_{k-1}+(1-p)pd_{k-2}+\cdots+(1-p)p^{m-3}d_{k-m+2}+\sum_{u_1\cdots u_{k-m+1}10\in\Lambda^*_m}p^{m-2}\mu_p[u_1\cdots u_{k-m+1}10]\\
&=(1-p)d_{k-1}+(1-p)pd_{k-2}+\cdots+(1-p)p^{m-3}d_{k-m+2}+p^{m-2}d_{k-m+1},
\end{aligned}$$
\end{small}
where $(\star)$, $(\star\star)$ and $(\star\star\star)$ follow from
$$\begin{aligned}
&u_1\cdots u_{k-1}101\in\Lambda^*_m\Leftrightarrow u_1\cdots u_{k-1}10\in\Lambda^*_m\\
&\Rightarrow u_1\cdots u_{k-1}100\in\Lambda^*_m,
\end{aligned}$$
$$\begin{aligned}
&u_1\cdots u_{k-2}1001\in\Lambda^*_m\Leftrightarrow u_1\cdots u_{k-2}10\in\Lambda^*_m\\
&\Rightarrow u_1\cdots u_{k-2}101, u_1\cdots u_{k-2}1000\in\Lambda^*_m
\end{aligned}$$
and
$$\begin{aligned}
&u_1\cdots u_{k-m+1}10^{m-1}1\in\Lambda^*_m\Leftrightarrow u_1\cdots u_{k-m+1}10\in\Lambda^*_m\\
&\Rightarrow u_1\cdots u_{k-m+1}101,\cdots, u_1\cdots u_{k-m+1}10^{m-2}1\in\Lambda^*_m\\
&\text{but } u_1\cdots u_{k-m+1}10^{m-1}0\notin\Lambda^*_m
\end{aligned}$$
respectively, recalling the definition of $\mu_p$.

Combining \textcircled{\footnotesize{$1$}} and \textcircled{\footnotesize{$2$}} we get $(1-p)(a_k-p^{m-2}d_{k-m+1})=c_k-p^{m-2}d_{k-m+1}$,
$$\text{i.e.,}\quad (1-p)a_k+p^{m-1}d_{k-m+1}=c_k \quad\text{for any }k\ge m.$$
That is,
$$(1-p)a_{k+m}+p^{m-1}d_{k+1}=c_{k+m} \quad\text{for any }k\ge0,$$
which implies
$$(1-p)\frac{1}{n}\sum_{k=0}^{n-1}a_{k+m}+p^{m-1}\frac{1}{n}\sum_{k=0}^{n-1}d_{k+1}=\frac{1}{n}\sum_{k=0}^{n-1}c_{k+m}.$$
Let $n\to\infty$, we get $(1-p)a+p^{m-1}d=c$. It follows from the same way that $pb+(1-p)^{m-1}c=d$.

Combining (1), (2) and (3) we get $a=\frac{p-p^m}{1-p-(1-p)^m}$.
\end{proof}

\begin{ack}
The author is grateful to Professor Jean-Paul Allouche for his advices on a former version of this paper, and also grateful to the Oversea Study Program of Guangzhou Elite Project (GEP) for financial support (JY201815).
\end{ack}

\end{document}